\numberwithin{equation}{section}
\theoremstyle{plain}
\newtheorem{thm}{Theorem}[section]
\newtheorem{prop}[thm]{Proposition}
\newtheorem{lemma}[thm]{Lemma}
\theoremstyle{definition}
\newtheorem{deff}[thm]{Definition}
\newtheorem{example}[thm]{Example}
\newtheorem{remark}[thm]{Remark}
\newcommand{\ran}{\bm{r}}
\newcommand{\domr}{\bm{d}}
\newcommand{\G}{\Gamma}
\newcommand{\im}{\operatorname{Im}}
\newcommand{\dom}{\operatorname{Dom}}
\newcommand{\gr}{\operatorname{gr}}
\newcommand{\GIS}{\operatorname{\Gamma\bf{GrIS}}}
\newcommand{\GG}{\operatorname{\Gamma\bf{GrGp}}}
\newcommand{\SG}{\operatorname{\mathsf{S}}}
\newcommand{\GP}{\operatorname{\mathsf{G}}}
\def\a{\alpha}
\def\b{\beta}
\def \Z{\mathbb Z}
\def\-{\text{-}}
\title{Graded Lawson-Stone Duality}
\author{Roozbeh Hazrat}
\address{R.\ Hazrat: Centre for Research in Mathematics and Data Science, Western Sydney University, AUSTRALIA}
\email{\textcolor[rgb]{0.00,0.00,0.84}{r.hazrat@westernsydney.edu.au}}
\author{Zachary Mesyan}
\address{Z.\ Mesyan: Department of Mathematics, University of Colorado, Colorado Springs, CO, 80918, USA} 
\email{\textcolor[rgb]{0.00,0.00,0.84}{zmesyan@uccs.edu}}
\subjclass[2020]{20M18, 22A22 (primary), 06E15, 18B40, 20M25, 20M50 (secondary)}
\keywords{inverse semigroup, graded semigroup, ample groupoid, graded groupoid, Stone duality, enveloping ring}
\date{\today}
\begin{document}

\begin{abstract} 
The classical Stone duality associates to each Boolean algebra a topological space consisting of ultrafilters. Lawson's generalisation constructs a dual equivalence of categories of Boolean inverse $\land$-semigroups and Hausdorff ample topological groupoids. We further generalise the duality to graded versions of those categories, while allowing larger classes of morphisms, and provide various illustrations.
\end{abstract}

\maketitle

\section{Introduction}

Stone's representation theorem for Boolean algebras~\cite{stone} famously associates to each Boolean algebra a topological space consisting of ultrafilters. That general idea has been extended over the years to construct dual equivalences of various categories, collectively referred to as \emph{Stone dualities}. In particular, (building on the work of Kellendonk~\cite{kellendonk1,kellendonk2}, Paterson~\cite{paterson1}, and Lenz~\cite{Lenz}) Lawson~\cite{Lawson2, Lawson3} established a duality between certain categories of Boolean inverse $\land$-semigroups and Hausdorff ample topological groupoids. (See also~\cite{BEM,exel,GH,LMS}.)

Lawson's duality results have received significant attention, in part because inverse semigroups and topological groupoids have played a fundamental role in constructing widely studied noncommutative algebras and $C^*$-algebras. One typically builds the relevant inverse semigroups and topological groupoids from some combinatorial object, such as a graph. From the resulting inverse semigroups one can then build enveloping algebras~\cite{wehrung}, while from the resulting topological groupoids one can construct convolution algebras (e.g., Steinberg algebras~\cite{ClarkH} and groupoid $C^*$-algebras~\cite{renault}). In certain circumstances these enveloping and convolution algebras are (graded) isomorphic~\cite{rigby}. The aforementioned connections are summarised in the following diagram.
\begin{equation*}\label{schemeobjects}
\xymatrix@=1pc{ 
 & \text{inverse semigroups} \ar@{<:>}[dd]^{Lawson-Stone \, duality} \ar[rr]&& \text{envoloping algebras}  \ar@{<:>}[dd]^{(graded) \, isomorphism}\\
\text{combinatorial data}  \ar[ur] \ar[dr]\\
 & \text{ample groupoids} \ar[rr]&& \text{convolution algebras}}
\end{equation*}

It was noticed a long time ago that the algebras described above are graded, and the grading has played a crucial role in their study. For one thing, the zero-component subalgebras are typically rich and well-behaved. For another, one can lift properties from the homogeneous components to the whole algebras. More concretely, the gradings on Leavitt path algebras (which constitute a class of Steinberg algebras) played a fundamental role in the characterisation of their ideals~\cite{AAS}. Moreover, a central open question in the area is to describe the extent to which graded/equivariant $K$-theory classifies the algebras~\cite{GGG}. 

The gradings of the enveloping and convolution algebras in question arise from the discrete nature of the combinatorial objects from which they are built. For example, in the case of a graph, the algebra grading ultimately results from path lengths. Thus it is natural to study the gradings on the level of semigroups and groupoids, most immediately constructed from the combinatorial objects, rather than on the higher level of resulting algebras. Gradings on both (inverse) semigroups~\cite{RZ} and groupoids~\cite{arasims,chr19} have indeed been systematically explored recently. Moreover, Steinberg~\cite{steinberg}  connected the two, by constructing a graded isomorphism between each graded Hausdorff ample groupoid $\mathscr{G}$ and a certain groupoid constructed from a graded inverse semigroup associated to $\mathscr{G}$. The purpose of this note is to reinforce the connection between graded inverse semigroups and graded topological groupoids, by generalising Lawson's duality to the graded context. We also endow our categories with larger classes of morphisms, which allows us to capture various natural types of morphisms excluded in the earlier context (the details can be found in Definitions~\ref{is-cat-def} and~\ref{gp-cat-def} below). See~\cite{Bice,BS,CG,KL1,KL2,LL,resende} for generalisations of Lawson's duality in other directions, and~\cite{kudryavtseva} for an overview of related developments.

In Sections~\ref{def-sect} and~\ref{basics-sect} we provide definitions of, and basic observations about, our main objects of study, namely graded-Boolean inverse $\land$-semigroups and graded Hausdorff ample groupoids. In Section~\ref{eg-sect} we give several natural constructions of such semigroups, including graded symmetric inverse monoids and distributive completions of graph inverse semigroups. Section~\ref{duality-sect} is devoted to proving our main result (Theorem~\ref{gr-duality-thm}), that the categories of graded-Boolean inverse $\land$-semigroups and graded Hausdorff ample groupoids (with appropriate morphisms) are dually equivalent. Finally, in Section~\ref{rings-sect} we show that given a graded Hausdorff ample groupoid $\mathscr{G}$ and a ring $R$, we have $R\langle \SG(\mathscr{G}) \rangle \cong R\langle \SG^{\gr}(\mathscr{G}) \rangle$, where $\SG^{\gr}(\mathscr{G})$ is the graded inverse semigroup associated to $\mathscr{G}$ by our duality, $\SG(\mathscr{G})$ is its non-graded counterpart from Lawson's duality, and $R\langle \SG(\mathscr{G}) \rangle$ and $R\langle \SG^{\gr}(\mathscr{G}) \rangle$ are corresponding enveloping rings.

\section{Basic Definitions} \label{def-sect}

We begin by recalling and defining the various objects required for a graded version of Lawson's duality, namely, Boolean algebras, gradings, (Boolean) inverse semigroups, and (ample) topological groupoids.

\subsection{Partially Ordered Sets}

Recall that a partially-ordered set $(X, \leq)$ is a \textit{lattice}, if for all $x, y\in X$ there exists in $X$ a meet (i.e., greatest lower bound) $x \wedge y$ and a join (i.e., least upper bound) $x\vee y$. A lattice $X$ is \textit{distributive} if 
\[x\wedge(y\vee z) = (x\wedge y)\vee (x\wedge z) \text{ and } x\vee(y\wedge z) = (x\vee y) \wedge (x\vee z),\] 
for all $x, y, z\in X$. A \emph{Boolean algebra} is a distributive lattice $X$, with top and bottom elements ($1$ and $0$, respectively), along with a unary \emph{complement} operator $\lnot$, satisfying $x \lor \lnot x = 1$ and $x \land \lnot x = 0$, for all $x \in X$.

Let $(X, \leq)$ be a partially ordered set with bottom element $0$, and let $Y \subseteq X$. We set 
\[Y^{\uparrow} := \{x \in X \mid \exists y \in Y \text{ such that } y \leq x\},\]
and
\[Y^{\downarrow} := \{x \in X \mid \exists y \in Y \text{ such that } y \geq x\}.\]
We say that $Y$ is \emph{downward directed} if it is nonempty, and for all $x,y \in Y$ there exists $z\in X$ such that $z \leq x$ and $z \leq y$. If $Y$ is downward directed, $Y = Y^{\uparrow}$, and $0 \notin Y$, then $Y$ is a \emph{filter}. A maximal filter is called an \emph{ultrafilter}. If $Y$ is a filter with the property that $x \in Y$ or $y \in Y$, for all $x,y \in X$ having a join $x \lor y \in X$, with $x \lor y \in Y$, then we say that $Y$ is \emph{prime}. If $Y = Y^{\downarrow}$, then $Y$ is an \emph{order ideal}. An order ideal of the form $\{x\}^{\downarrow}$ ($x \in X$) is called \emph{principal}.

\subsection{Graded Semigroups} \label{gradedsemisubsection}

Recall that a \emph{semigroup} is a nonempty set equipped with an associative binary operation. A \emph{monoid} is a semigroup with an identity element $1$. We assume, throughout, that every semigroup has a zero element $0$, unless stated otherwise. Given a semigroup $S$, we denote by $E(S)$ the set of idempotents of $S$. If for each $s \in S$ there is a unique $s^{-1} \in S$ satisfying $s = ss^{-1}s$ and $s^{-1} = s^{-1}ss^{-1}$, then $S$ is an \emph{inverse semigroup}. In this case $(st)^{-1} = t^{-1}s^{-1}$ for all $s,t \in S$. It is well-known that if $S$ is an inverse semigroup, then the elements of $E(S)$ commute. See~\cite[\S 5]{Howie} or~\cite{Lawson1} for more details about inverse semigroup fundamentals.

Given a semigroup $S$ and a group $\Gamma$, we say that $S$ is $\Gamma$-\emph{graded} if there is a map $\phi : S \setminus\{0\} \to \Gamma$ such that $\phi(st) = \phi(s)\phi(t)$, whenever $st \not= 0$. For each $\alpha \in \Gamma$, we set $S_\alpha:=\phi^{-1}(\alpha) \cup \{0\}$. Equivalently, $S$ is $\Gamma$-\emph{graded} if for each $\alpha \in \Gamma$ there exists $S_\alpha \subseteq S$ such that
\[S = \bigcup_{\alpha \in \G} S_\alpha,\]
where $S_\a S_\b \subseteq S_{\a\b}$ for all $\a,\b \in \Gamma$, and $S_\a \cap S_\b = \{0\}$ for all distinct $\a,\b \in \Gamma$.

Let $S$ be a $\Gamma$-graded semigroup. For each $\a \in \Gamma$ and $s \in S_\alpha \setminus \{0\}$, we say that the \emph{degree} of $s$ is $\alpha$, and write $\deg(s):=\alpha$. If $T \subseteq S$ is such that $T \subseteq S_{\alpha}$ for some $\alpha \in \Gamma$, then we say that $T$ is \emph{homogeneous} (\emph{of degree $\alpha$}). Note that $\deg(s)=\varepsilon$ (the identity element of $\Gamma$) for all $s \in E(S) \setminus \{0\}$, and $\deg(s^{-1}) = \deg(s)^{-1}$ for all $s \in S \setminus \{0\}$, in the case where $S$ is an inverse semigroup. We say that $S$ is \emph{trivially} graded if $S_\varepsilon = S$. It is easy to see that $S_{\varepsilon}$ is a semigroup (with zero), and that $S_\varepsilon$ is an inverse semigroup whenever $S$ is. A homomorphism $\phi:S \rightarrow T$ of $\Gamma$-graded semigroups is \emph{graded} if $\phi(S_\alpha)\subseteq T_\alpha$ for every $\alpha \in \Gamma$. See~\cite{RZ} for more information on graded semigroups.

\subsection{Boolean Inverse Semigroups} \label{bool-def-sect}

Let $S$ be an inverse semigroup. Elements $s,t \in S$ are said to be \emph{compatible} if $s^{-1}t, st^{-1} \in E(S)$ (the set of idempotents), and to be \emph{orthogonal} if $s^{-1}t = 0 = st^{-1}$. We say that subset of $S$ is \emph{compatible} if each pair of elements in that subset is compatible. 

The \emph{natural partial order} $\leq$ on $S$ is defined by $s\leq t$ ($s,t \in S$) if $s=tu$ for some $u \in E(S)$. Equivalently, $s\leq t$ if $s=ut$ for some $u \in E(S)$. In particular, if $u,v \in E(S)$, then $u \leq v$ amounts to $u=uv=vu$. Also if $s \leq t$ for some $s,t \in S$, then $s^{-1} \leq t^{-1}$, and $rs \leq rt$, $sr \leq tr$ for all $r \in S$. See~\cite[\S 5.2]{Howie} for more details.

It is well-known that $E(S)$ forms a lower semilattice (i.e., a partially ordered set where every pair of elements has a meet), relative to the restriction of $\leq$ to $E(S)$, with $e \land f = ef$ for all $e,f \in E(S)$. If each pair of elements in $S$ has a meet, relative to $\leq$, then $S$ is called an \emph{inverse $\land$-semigroup}.

\begin{deff} \label{bool-def}
Let $S$ be a $\Gamma$-graded inverse semigroup, for some group $\Gamma$. Then $S$ is $\Gamma$-\emph{graded-Boolean}, or simply \emph{graded-Boolean}, if it satisfies the following conditions.
\begin{enumerate}[\upshape(1)]
\item For all $\alpha \in \Gamma$, every orthogonal pair $s,t \in S_{\alpha}$ has a join $s\vee t$ in $S$.

\smallskip 

\item $E(S)$ is a distributive lattice, and each nonzero principal order ideal of $E(S)$ is a Boolean algebra.
\end{enumerate}
\end{deff}

\begin{remark}\label{bool-rem1}
In~\cite{Lawson3,Lawson1} an inverse semigroup $S$ is defined to be \emph{Boolean} if it satisfies the following three conditions.
\begin{enumerate}[\upshape(i)]
\item Every nonempty finite compatible subset of $S$ has a join in $S$.

\smallskip 

\item $E(S)$ is a distributive lattice, and each nonzero principal order ideal of $E(S)$ is a Boolean algebra.

\smallskip 

\item For all nonempty compatible $\{s_1, \dots, s_n\} \subseteq S$, and all $s \in S$, the joins $\bigvee_{i=1}^n(s_is)$ and $\bigvee_{i=1}^n(ss_i)$ exist, $\bigvee_{i=1}^n(s_is) = \big(\bigvee_{i=1}^ns_i\big)s$, and $\bigvee_{i=1}^n(ss_i) = s\big(\bigvee_{i=1}^ns_i\big)$.
\end{enumerate}
However, according to~\cite[Proposition 8.2.4]{Lawson1}, an inverse semigroup $S$ is Boolean if and only if it satisfies (ii) and the following condition.
\begin{enumerate}[\upshape(i$'$)]
\item Every orthogonal pair $s,t \in S$ has a join $s\vee t$ in $S$.
\end{enumerate}
Thus Definition~\ref{bool-def} generalises the notion of \emph{Boolean inverse semigroup}. More specifically, if $S$ is a Boolean inverse semigroup and $\Gamma$ is any group, then $S$ is $\Gamma$-graded-Boolean, under the trivial grading.
\end{remark}

\begin{remark}\label{bool-rem2}
The proof of the aforementioned~\cite[Proposition 8.2.4]{Lawson1} is element-wise, and translates verbatim to the graded setting. More specifically, let $S$ be a $\Gamma$-graded inverse semigroup. Then the argument in the proof of~\cite[Proposition 8.2.4]{Lawson1} shows that if $S$ satisfies (2) in Definition~\ref{bool-def}, and every pair of orthogonal elements in $S_{\alpha}$ ($\alpha \in \Gamma$) has a join, then any nonempty finite set of compatible elements of $S_{\alpha}$ has a join, and multiplication distributes over such joins. That is, if $S$ satisfies the conditions in Definition~\ref{bool-def}, then it also satisfies the following.
\begin{enumerate}[\upshape($1'$)]
\item For all $\alpha \in \Gamma$, every nonempty finite compatible subset of $S_{\alpha}$ has a join in $S$.
\end{enumerate}
\begin{enumerate}[\upshape($3'$)]
\item For all $\alpha \in \Gamma$, all nonempty compatible $\{s_1, \dots, s_n\} \subseteq S_{\alpha}$, and all $s \in S$, the joins $\bigvee_{i=1}^n(s_is)$ and $\bigvee_{i=1}^n(ss_i)$ exist, $\bigvee_{i=1}^n(s_is) = \big(\bigvee_{i=1}^ns_i\big)s$, and $\bigvee_{i=1}^n(ss_i) = s\big(\bigvee_{i=1}^ns_i\big)$.
\end{enumerate}
\end{remark}

We are now ready to define the first of the two categories for our duality construction.

\begin{deff} \label{is-cat-def}
Given a group $\Gamma$, we denote by $\GIS$ the category of $\Gamma$-graded-Boolean inverse $\land$-semigroups, having as its morphisms graded homomorphisms $S \to T$ of inverse semigroups, which preserve meets, and restrict to lattice homomorphisms $E(S) \to E(T)$ that preserve complements in principal order ideals.
\end{deff}

A homomorphism $\phi : S \to T$ of inverse semigroups is called \emph{proper} if $\phi^{-1}(X)$ is an ultrafilter on $S$ for every ultrafilter $X$ on $T$. In the duality established in~\cite{Lawson2, Lawson3} the (non-graded) morphisms in the inverse semigroup category are assumed to be proper, in addition to having the properties listed in Definition~\ref{is-cat-def} (i.e., preserving meets, and restricting to idempotent lattice homomorphisms that preserve complements in principal order ideals). We work with larger classes of morphisms in our categories, but will discuss proper morphisms as well. See Example~\ref{morph-eg} for an illustration of the possibilities allowed by relaxing the restrictions on the morphisms.

\subsection{Graded Topological Groupoids} \label{groupoidsubsection}

A \emph{groupoid} $\mathscr{G}$ is a small category in which every morphism is invertible. We denote the set of objects of $\mathscr{G}$, also known as the \emph{unit space}, by $\mathscr{G}^{(0)}$, and we identify these objects with the corresponding identity morphisms. For each morphism $x \in \mathscr{G}$, the object $\domr(x):=x^{-1}x$ is the \emph{domain} of $x$, and the object $\ran(x):=xx^{-1}$ is its \emph{range}. (Thus, two morphisms $x, y \in \mathscr{G}$ are composable as $xy$ if and only if $\domr(x)=\ran(y)$.) Define 
\[\mathscr{G}^{(2)} := \big \{(x,y) \in \mathscr{G} \times \mathscr{G} \mid \domr(x) = \ran(y) \big\}.\]
For subsets $X,Y\subseteq \mathscr{G}$ of morphisms, we define
\begin{equation}\label{pofgtryhf}
XY:=\big \{xy \mid x \in X, y \in Y, \domr(x)=\ran(y) \big\},
\end{equation}
and
\begin{equation}\label{pofgtryhf2}
X^{-1}:=\big \{x^{-1} \mid x \in X \big \}.
\end{equation}

A \textit{topological groupoid} is a groupoid (whose set of morphisms is) equipped with a topology making inversion and composition continuous. A topological groupoid $\mathscr{G}$ is \textit{\'etale} if $\domr: \mathscr{G} \to \mathscr{G}^{(0)}$ is a local homeomorphism, where $\mathscr{G}^{(0)}$ is viewed as a topological space in the topology induced by that on $\mathscr{G}$. (I.e., for all $x \in \mathscr{G}$ there is an open $X \subseteq \mathscr{G}$ such that $x \in X$, $\domr(X)$ is open, and the restriction $\domr|_{X} : X \to \domr(X)$ is a homeomorphism with respect to the relevant subspace topologies.) An open subset $X$ of an \'etale groupoid $\mathscr{G}$ is called a \emph{slice} or \emph{local bisection} if the restrictions $\domr|_{X}$ and $\ran|_{X}$ are injective. An \'etale groupoid $\mathscr{G}$ is \emph{ample} or \emph{Boolean} if the compact slices form a base for its topology, and $\mathscr{G}^{(0)}$ is Hausdorff in the induced topology.

Given a topological groupoid $\mathscr{G}$ and a group $\Gamma$, we say that $\mathscr{G}$ is $\Gamma$-\emph{graded} if there is a continuous functor $\phi:\mathscr{G} \rightarrow \Gamma$, where $\Gamma$ is endowed with the discrete topology. (Such a functor can be viewed as a continuous function from the set of all morphisms of $\mathscr{G}$ to $\Gamma$, such that $\phi(x)\phi(y) = \phi(xy)$ for all $(x,y) \in \mathscr{G}^{(2)}$.) Setting $\mathscr{G}_\alpha:=\phi^{-1}(\alpha)$ for each $\alpha \in \Gamma$, we have
\[\mathscr{G}= \bigcup_{\alpha \in \G} \mathscr{G}_\alpha,\]
where $\mathscr{G}_\alpha \mathscr{G}_\beta \subseteq \mathscr{G}_{\alpha\beta}$ for all $\alpha,\beta \in \Gamma$, and $\mathscr{G}_\alpha \cap \mathscr{G}_\beta = \emptyset$ for all distinct $\alpha,\beta \in \Gamma$. Note that $\mathscr{G}^{(0)} \subseteq \mathscr{G}_\varepsilon$. As with semigroups, we say that each $x \in \mathscr{G}_\alpha$ has \emph{degree} $\alpha$, and write $\deg(x) := \alpha$.

Let $\mathscr{G}$ and $\mathscr{H}$ be topological groupoids, and let $\phi : \mathscr{G} \to \mathscr{H}$ be a \emph{partial} (or \emph{partially defined}) \emph{functor}--that is, $\phi : \mathscr{G}' \to \mathscr{H}$ is a functor, where $\mathscr{G}'$ is an open subgroupoid of $\mathscr{G}$ (i.e., $\mathscr{G}'$ is a subcategory of $\mathscr{G}$, closed under morphism inversion, whose set of morphisms is open in the topology on $\mathscr{G}$). Then $\phi$ is \emph{covering} if it is \emph{star-injective} (i.e., if $\phi(x) = \phi(y)$ and $\domr(x) = \domr(y)$, then $x = y$), and \emph{star-surjective} (i.e., if $\domr(y) = \phi(e)$ for an identity morphism $e$, then $\domr(x) = e$ and $\phi(x) = y$ for some $x \in \mathscr{G}'$). The partial functor $\phi$ is \emph{proper} if it is continuous (relative to the subspace topology on its domain $\mathscr{G}'$), and the inverse image under $\phi$ of every compact subset of $\mathscr{H}$ is a compact subset of $\mathscr{G}'$ (and hence also of $\mathscr{G}$). If $\mathscr{G}$ and $\mathscr{H}$ are $\Gamma$-graded, for some group $\Gamma$, then $\phi$ is \emph{graded} if $\phi(\mathscr{G}_{\alpha} \cap \mathscr{G}') \subseteq \mathscr{H}_{\alpha}$ for all $\alpha \in \Gamma$.

Let $\phi : \mathscr{G}_1 \to \mathscr{G}_2$ and $\psi : \mathscr{G}_2 \to \mathscr{G}_3$ be continuous partial functors of topological groupoids, with domains $\mathscr{G}'_1$ and $\mathscr{G}'_2$, respectively. Then we obtain a continuous partial functor $\psi \circ \phi : \mathscr{G}_1 \to \mathscr{G}_3$ by restricting to the (open) domain $\phi^{-1}(\mathscr{G}'_2) = \phi^{-1}(\mathscr{G}'_2 \cap \phi(\mathscr{G}'_1))$. It is easy to check that if $\phi$ and $\psi$ are covering, proper, or graded, then so is $\psi \circ \phi$, respectively. With that in mind, we are ready to define our second category of interest.

\begin{deff} \label{gp-cat-def}
Given a group $\Gamma$, we denote by $\GG$ the category of $\Gamma$-graded Hausdorff ample groupoids, with graded proper covering partial functors as its morphisms.
\end{deff}

In the duality established in~\cite{Lawson2, Lawson3} the (non-graded) morphisms in the Hausdorff ample groupoid category are assumed to be fully-defined, in addition to being proper and covering. Our duality requires a more permissive notion of ``morphism", in light of our choice of morphisms in the inverse semigroup category (see Definition~\ref{is-cat-def} and subsequent comment).

\section{Graded-Boolean Basics}\label{basics-sect}

In this section we record some preliminary observations about graded-Boolean inverse $\land$-semigroups, that will be used extensively.

\begin{lemma} \label{filter-lemma}
Let $S$ be a $\, \Gamma$-graded inverse semigroup.
\begin{enumerate}[\upshape(1)]
\item If $s,t \in S$ are compatible but not orthogonal, then $\, \deg(s)=\deg(t)$.

\smallskip 

\item If $s,t \in S \setminus \{0\}$ are comparable in the natural partial order, then $\, \deg(s)=\deg(t)$.

\smallskip 

\item If $X \subseteq S$ is a filter, then $\, \deg(s)=\deg(t)$ for all $s, t\in X$.

\smallskip

\item If $\alpha \in \Gamma$, and $s \in S$ is the join or meet for some nonempty subset of $S_{\alpha}$ $($in the natural partial order on $S$$)$, then $s \in S_{\alpha}$. 
\end{enumerate}
\end{lemma}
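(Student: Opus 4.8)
The plan is to prove (1) directly from the definitions and then derive (2)--(4) from it, using only the elementary facts about graded inverse semigroups recorded in Section~\ref{gradedsemisubsection}: namely $\deg(s^{-1})=\deg(s)^{-1}$, $\deg(st)=\deg(s)\deg(t)$ whenever $st\ne 0$, and $\deg(e)=\varepsilon$ for every nonzero idempotent $e$. The one recurring subtlety is that $\deg$ is defined only on $S\setminus\{0\}$, while $0$ lies in every $S_\alpha$; so at each step I first check that the elements in play are nonzero, or else dispose of the trivial case where the element concerned is $0$.

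For (1): if $s,t$ are compatible but not orthogonal, then neither is $0$ (as $0$ is orthogonal to everything), so $\deg(s)$ and $\deg(t)$ are defined. Non-orthogonality gives $s^{-1}t\ne 0$ or $st^{-1}\ne 0$; in the first case $s^{-1}t$ is a nonzero idempotent, so $\varepsilon=\deg(s^{-1}t)=\deg(s)^{-1}\deg(t)$ and hence $\deg(s)=\deg(t)$, and the second case is symmetric via $\deg(st^{-1})=\deg(s)\deg(t)^{-1}$. For (2): if $s\le t$ with $s\ne 0$, write $s=te$ with $e\in E(S)$; then $e\ne 0$, so $\deg(e)=\varepsilon$ and $\deg(s)=\deg(te)=\deg(t)\deg(e)=\deg(t)$ (alternatively, comparable nonzero elements are compatible and non-orthogonal, so (1) applies). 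For (3): a filter $X$ is downward directed, so given $s,t\in X$ there is $z\in X$ with $z\le s$ and $z\le t$; since $0\notin X$ we have $z\ne 0$, and two applications of (2) give $\deg(s)=\deg(z)=\deg(t)$. For (4): let $\emptyset\ne A\subseteq S_\alpha$ and let $s$ be its join or meet; if $s=0$, then $s\in S_\alpha$ and we are done. If $s\ne 0$ and $s$ is the join, then some $a\in A$ is nonzero (otherwise $A=\{0\}$ and $s=0$), and $a\le s$ with $\deg(a)=\alpha$, so $\deg(s)=\alpha$ by (2); if $s\ne 0$ and $s$ is the meet, then $s\le a$ for every $a\in A$, each such $a$ is necessarily nonzero, and $\deg(s)=\deg(a)=\alpha$ again by (2).

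I do not anticipate a genuine obstacle: each step collapses to a one-line computation with the grading map. The parts leaning most heavily on the grading axioms are (1) and (4), but even those are routine; the only thing requiring consistent attention is the bookkeeping around the zero element---in particular, in (3) one must use that the lower bound $z$ furnished by downward-directedness genuinely belongs to the filter $X$ (hence is nonzero), not merely to $S$.
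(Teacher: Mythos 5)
Your proposal is correct and follows essentially the same route as the paper: prove (1) from the grading axiom applied to the nonzero idempotent $s^{-1}t$ (or $st^{-1}$), deduce (2) from the factorisation $s=te$, and obtain (3) and (4) by reducing to comparable nonzero elements, with the same care about the zero element. The only cosmetic differences are that the paper asserts $s^{-1}t\neq 0$ directly in (1) where you treat the two non-orthogonality cases symmetrically, and that it invokes (1) rather than (2) in part (3); neither affects the argument.
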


\begin{proof}
(1) Suppose that $s,t \in S$ are compatible but not orthogonal. Then $s^{-1}t \in E(S) \setminus \{0\}$, and so 
\[\deg(s)^{-1}\deg(t) = \deg(s^{-1}t) = \varepsilon,\]
which implies that $\deg(s)=\deg(t)$.

\smallskip

(2) Taking comparable $s,t \in S \setminus \{0\}$, we may assume that $s=tu$ for some $u \in E(S)$. Since $s \neq 0$, we have
\[\deg(s) = \deg(t)\deg(u) = \deg(t).\]
The statement also follows from (1).

\smallskip

(3) Suppose that $s, t\in X$. Since $X$ is downward directed, $r \leq s$ and $r \leq t$ for some $r \in X$. Since $r,s,t \neq 0$, as elements of a filter, the statement follows from (2).

\smallskip

(4) Suppose that $s \in S$ is the join or meet for some nonempty $T \subseteq S_{\alpha}$. Since $0 \in S_{\alpha}$, we may assume that $s \neq 0$, and hence also that $T \neq \{0\}$. Then $t \leq s$ or $s \leq t$ for some $t \in T\setminus \{0\} \subseteq S_{\alpha} \setminus \{0\}$, and so $s \in S_{\alpha}$, by (2).
\end{proof}

\begin{lemma}[cf.\ \cite{Lawson2}, Lemma 2.2(2,3)] \label{comp-lem}
Let $S$ be an inverse $\land$-semigroup that satisfies condition $\, (2)$ in Definition~\ref{bool-def}, and let $s,t \in S$.  
\begin{enumerate}[\upshape(1)]
\item The join of a pair of elements of a principal order ideal of $E(S)$ is also its join in $S$.

\smallskip 

\item If $s\leq t$, then there is an element $t \setminus s$ of $S$ such that $s$ is orthogonal to $t \setminus s$, and $t = s \lor (t \setminus s)$.

\smallskip 

\item If $s\not\leq t$, then there exists $r \in S\setminus \{0\}$ such that $r \leq s$ and $r\land t = 0$.
\end{enumerate}
\end{lemma}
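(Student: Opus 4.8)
The plan is to reduce both parts of Lemma~\ref{comp-lem} to properties of the idempotent semilattice $E(S)$, where the graded-Boolean axioms give us a Boolean algebra structure on principal order ideals, and then transport the resulting idempotents back up to $S$ using multiplication. This mirrors the strategy of~\cite{Lawson2}, Lemma 2.2, but we must be careful that the joins we form only involve compatible homogeneous elements, so that Definition~\ref{bool-def}(1) and (2) actually apply; Lemma~\ref{filter-lemma} is the tool that guarantees this.

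For part~(1), suppose $s \leq t$. Then $e := s^{-1}s$ and $f := t^{-1}t$ are idempotents with $e \leq f$, so $e$ lies in the principal order ideal $\{f\}^{\downarrow}$ of $E(S)$, which is a Boolean algebra by Definition~\ref{bool-def}(3). Let $e' := f \setminus e$ be the complement of $e$ in $\{f\}^{\downarrow}$, so $e \wedge e' = 0$, $e \vee e' = f$, and $e, e' \leq f$. Set $t \setminus s := te'$. First I would check $t\setminus s \leq t$: this is immediate since $e' \in E(S)$. Next, orthogonality of $s$ and $t \setminus s$: compute $s^{-1}(te') = (s^{-1}t)e' $; since $s \leq t$ we have $s^{-1}t = s^{-1}s = e$ (using $s = te$ and $e$ idempotent), so $s^{-1}(te') = ee' = 0$, and similarly $s(te')^{-1} = s e' t^{-1} = (te)e't^{-1}$; here $(te)e' = t(ee') = 0$ since $e, e'$ commute. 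So $s$ and $t\setminus s$ are orthogonal, hence in particular compatible. By Lemma~\ref{filter-lemma}(1) they are homogeneous of the same degree (or one is zero, handled trivially), so by Definition~\ref{bool-def}(1) the join $s \vee (t\setminus s)$ exists; I would then verify it equals $t$ by computing $s \vee te' = te \vee te' = t(e \vee e') = tf = t$, invoking Definition~\ref{bool-def}(2) to pull the multiplication by $t$ out of the join $e \vee e'$ inside $E(S)$, and using $t = tf = t(t^{-1}t)$.

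For part~(2), suppose $s \not\leq t$. The meet $s \wedge t$ exists since $S$ is an inverse $\land$-semigroup, and $s\wedge t \leq s$, with $s \wedge t \ne s$ (else $s \leq t$). By part~(1) applied to $s \wedge t \leq s$, set $r := s \setminus (s \wedge t)$; then $r \leq s$, $r \ne 0$ (since $s \wedge t \ne s$ and $s = (s\wedge t) \vee r$ would force $r = 0$ if $r = 0$... more carefully, if $r = 0$ then $s = (s\wedge t)\vee 0 = s\wedge t$, a contradiction), and $r$ is orthogonal to $s \wedge t$. It remains to see $r \wedge t = 0$. Since $r \leq s$, we have $r \wedge t \leq s$ and $r \wedge t \leq t$, so $r \wedge t \leq s \wedge t$ by the universal property of meets; but also $r \wedge t \leq r$, and $r$ is orthogonal to $s\wedge t$, which forces $r \wedge t \leq r \wedge (s\wedge t)$. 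Orthogonality of $r$ and $s\wedge t$ means $r^{-1}(s\wedge t) = 0 = r(s\wedge t)^{-1}$; I would argue that an element below both $r$ and $s \wedge t$ must then be $0$ — indeed if $q \leq r$ and $q \leq s\wedge t$ then $q = q q^{-1} q$ with $q q^{-1} \leq r r^{-1}$... actually cleaner: $q \le r$ and $q \le s \wedge t$ give $q^{-1} q \le r^{-1} r$ and $q^{-1} q \le (s\wedge t)^{-1}(s\wedge t)$, and since $r^{-1} r$ and $(s\wedge t)^{-1}(s\wedge t)$ are orthogonal idempotents (their product is $r^{-1}(s\wedge t)\cdot$ ... wait, I need $r^{-1} r \cdot (s\wedge t)^{-1}(s\wedge t)$; from $r^{-1}(s\wedge t) = 0$ one gets this product is $0$), so $q^{-1}q = 0$, hence $q = 0$. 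Therefore $r \wedge t = 0$, as desired.

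The main obstacle I anticipate is the bookkeeping around which joins are guaranteed to exist: the graded-Boolean axioms only provide joins of compatible \emph{homogeneous} families, so at each step where I write $e \vee e'$ or $s \vee (t\setminus s)$ I must confirm via Lemma~\ref{filter-lemma} that the elements in question share a degree (here this is automatic because they are orthogonal idempotents, hence of degree $\varepsilon$, or orthogonal and thus compatible). A secondary subtlety is the manipulation in Definition~\ref{bool-def}(2): it lets us distribute multiplication over finite joins of compatible homogeneous families on either side, and I should make sure the family $\{e, e'\}$ to which I apply it is indeed compatible (it is, being orthogonal) before concluding $te \vee te' = t(e\vee e')$. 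Once these existence issues are dispatched, the computations are routine inverse-semigroup manipulations using $s = te$, commutativity of idempotents, and $(xy)^{-1} = y^{-1}x^{-1}$.
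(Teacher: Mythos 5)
Your proposal is correct and follows essentially the same route as the paper: take the Boolean complement $e'$ of $s^{-1}s$ in the principal order ideal $\{t^{-1}t\}^{\downarrow}$, set $t\setminus s := te'$, and reduce part (2) to part (1) via $r = s\setminus(s\wedge t)$ together with the observation that orthogonal elements have zero meet. The only divergences are cosmetic: you verify $s\vee(t\setminus s)=t$ by distributing $t$ over $e\vee e'$ using Definition~\ref{bool-def}(2) (the paper instead shows the join $r$ satisfies $r\leq t$ and $r^{-1}r=t^{-1}t$), and your citation of Lemma~\ref{filter-lemma}(1) for the degrees of the orthogonal pair $s$, $t\setminus s$ should be to part (2) of that lemma (both lie below $t$) --- though your route renders this moot, since Definition~\ref{bool-def}(2) applied to the compatible pair $\{e,e'\}\subseteq S_{\varepsilon}$ already guarantees that the join $te\vee te'$ exists.
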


\begin{proof}
(1) Let $X$ be a principal order ideal of $E(S)$, let $u,v \in X$, and suppose that $w$ is the join of $u$ and $v$ in $X$. Now, suppose that $r \in S$ is such that $u, v \leq r$, and let $p := w \land r$, using the assumption that $S$ is an inverse $\land$-semigroup. Since $u, v \leq r$ and $u, v \leq w$, we have $u, v \leq p$. Since $p \leq w$ and $w \in E(S)$, necessarily $p \in E(S)$. Since $w \in X$, and $X = X^{\downarrow}$, also $p \in X$, and hence $p = w$. Thus $w \leq r$, from which it follows that $u \lor v = w$ in $S$.

\smallskip

(2) Suppose that $s\leq t$, and write $s = tv$, for some $v \in E(S)$. We may assume that $t \neq 0$, since otherwise $s=0$, and we can take $t\setminus s = 0$.

Let $X = \{t^{-1}t\}^{\downarrow} \subseteq E(S)$ be the principal order ideal of $E(S)$ generated by $t^{-1}t$. Then, by hypothesis, $X$ is a Boolean algebra. Since $E(S)$ is commutative, we have
\[s^{-1}st^{-1}t = vt^{-1}tvt^{-1}t = vt^{-1}tv = s^{-1}s,\] 
and so $s^{-1}s \leq t^{-1}t$. Thus $s^{-1}s \in X$, and therefore there exists $u \in X$ such that $s^{-1}s \lor u = t^{-1}t$ and $s^{-1}s \land u = 0$ in $X$. Note that, by (1), $s^{-1}s \lor u = t^{-1}t$ as elements of $S$. Setting $t \setminus s := tu$, we claim that this element has the desired properties.

Since $0 = s^{-1}s \land u  = us^{-1}s$, we have $us^{-1} = us^{-1}ss^{-1} = 0$, and hence also $su = 0$. Thus $(t\setminus s)s^{-1} = tus^{-1} = 0$, and 
\[(t\setminus s)^{-1}s = ut^{-1}s = ut^{-1}tv = t^{-1}tvu = t^{-1}su = 0.\]
Therefore $s$ and $t \setminus s$ are orthogonal.

Since $t \setminus s = tu \leq t$, to prove that $t = s \lor (t \setminus s)$, it suffices to take $p \in S$ such that $s, t \setminus s \leq p$, and show that $t \leq p$. We have 
\[t^{-1}s = t^{-1}tv = vt^{-1}tv = s^{-1}s,\]
and $t^{-1}(t \setminus s) = t^{-1}tu = u$. Since $t^{-1}s \leq t^{-1}p$ and $t^{-1}(t \setminus s) \leq t^{-1}p$, it follows that
\[t^{-1}t = s^{-1}s \lor u = t^{-1}s \lor t^{-1}(t \setminus s) \leq t^{-1}p,\]
and therefore $t = tt^{-1}t \leq tt^{-1}p \leq p$, as desired.

\smallskip

(3) Suppose that $s\not\leq t$. Since $S$ is an inverse $\land$-semigroup, $s \land t \in S$, and since $s \land t \leq s$, we may form $r := s\setminus (s\land t)$, by (2), where $r \leq s$ and $s = (s\land t) \lor r$. It cannot be the case that $r = 0$, since then we would have $s = s\land t$, and so $s \leq t$, contrary to hypothesis. Next, note that since, by (2), $r$ and $s \land t$ are orthogonal, it must be the case that $r\land (s\land t) = 0$. (Given orthogonal $p,q \in S$, setting $x := p \land q$, and writing $x=pu=qv$ for $u,v \in E(S)$, we have $p^{-1}pu = p^{-1}qv = 0$. Thus $x = pu = pp^{-1}pu = 0$.) Therefore, using the fact that $r \leq s$, we conclude that $0= (r\land s)\land t = r\land t$.
\end{proof}

The next lemma collects some basic well-known facts about filters on inverse semigroups. We give proofs, in cases where we could not locate them in the literature, for the convenience of the reader.

\begin{lemma} \label{idempt-filt-lem}
Let $S$ be an inverse semigroup, and let $X$ be a filter on $S$.
\begin{enumerate}[\upshape(1)]
\item If $s \in S$ is such that $\, 0 \notin sX$, then $(sX)^{\uparrow}$ is a filter on $S$.

\smallskip 

\item If $s,t \in S$ are such that $s^{-1}t \in X$, then $\, 0 \notin sX$.

\smallskip 

\item $Y = (X^{-1} X)^{\uparrow}$ is a filter on and an inverse subsemigroup of $S$ $($without $\, 0$$)$, and $X = (sY)^{\uparrow}$ for all $s \in X$.

\smallskip 

\item If $s \in S$ is such that $s^{-1}s \in X$ and $X$ is an ultrafilter, then $(sX)^{\uparrow}$ is an ultrafilter on $S$.

\smallskip 

\item If $X$ is an ultrafilter, then $(X^{-1} X)^{\uparrow}$ is an ultrafilter on $S$, and $E((X^{-1} X)^{\uparrow})$ is an ultrafilter on $E(S)$.
\end{enumerate}
\end{lemma}

\begin{proof}
(1) Let $s \in S$ be such that $0 \notin sX$, and let $r,t \in X$. Since $X$ is a filter, there exists $p \in X$ such that $p \leq r$ and $p \leq t$, and so $sp \leq sr$ and $sp \leq st$. Since $sp \in sX$, this shows that $sX$ is downward directed, which implies that $(sX)^{\uparrow}$ is downward directed as well. Given that $0 \notin sX$, it follows that $(sX)^{\uparrow}$ is a filter.

\smallskip

(2) Suppose that $s,t \in S$ are such that $s^{-1}t \in X$, and let $p \in X$ be arbitrary. Then there exists $r \in X$ such that $r \leq s^{-1}t$ and $r \leq p$. Write $r = s^{-1}tu = pv$ for some $u, v \in E(S)$. Then $r = s^{-1}ss^{-1}tu = s^{-1}spv$. Since $r \neq 0$, being an element of the filter $X$, it follows that $sp \neq 0$, and so $0 \notin sX$.

\smallskip

(3) This is~\cite[Lemma 2.8(1)]{Lawson3}.

\smallskip

(4) Suppose that $s \in S$, $s^{-1}s \in X$, and $X$ is an ultrafilter. Then $0 \notin sX$, by (2), and so $(sX)^{\uparrow}$ is a filter, by (1). To show that $(sX)^{\uparrow}$ is an ultrafilter, suppose that $(sX)^{\uparrow} \subseteq Z$ for some filter $Z$ on $S$. Since $ss^{-1}s \in Z$, we have $0 \notin s^{-1}Z$, by (2), and so $(s^{-1}Z)^{\uparrow}$ is a filter, by (1). Now, $s^{-1}sX \subseteq s^{-1}Z$, and so $(s^{-1}sX)^{\uparrow} \subseteq (s^{-1}Z)^{\uparrow}$. According to~\cite[Lemma 2.8(2)]{Lawson3}, since $X$ contains an idempotent, $X = (XX)^{\uparrow}$, and $X$ is an inverse subsemigroup of $S$ (without $0$). Thus, $X = (X^{-1}X)^{\uparrow}$, and so $(s^{-1}sX)^{\uparrow} = X$, by (3). Given that $X$ is an ultrafilter, we conclude that $X = (s^{-1}Z)^{\uparrow}$. Finally, letting $t \in Z$ be any element, we have $s^{-1}t \in X$, and so $t \geq ss^{-1}t \in sX$. Thus $(sX)^{\uparrow} = Z$, and so $(sX)^{\uparrow}$ is an ultrafilter.

\smallskip

(5) Suppose that $X$ is an ultrafilter, and write $Y = (X^{-1} X)^{\uparrow}$. Then $Y$ is a filter, by (3). Now suppose that $Y \subseteq Z$ for some filter $Z$ on $S$. Taking any $r \in X$, we have $X = (rY)^{\uparrow} \subseteq (rZ)^{\uparrow}$, by (3). Since $r^{-1}r \in Y \subseteq Z$, (2) implies that $0 \notin rZ$, and so $(rZ)^{\uparrow}$ is a filter, by (1). Given that $X$ is an ultrafilter, $(rY)^{\uparrow} = (rZ)^{\uparrow}$. Then letting $t \in Z$ be arbitrary, we have $rt \geq rs$ for some $s \in Y$, and so $t \geq r^{-1}rt \geq r^{-1}rs$. Since $Y$ is semigroup, by (3), and $r^{-1}r \in Y$, we have $r^{-1}rs \in Y$. Since $Y$ is a filter, it follows that $t \in Y$, and so we conclude that $Y=Z$. Therefore $Y$ is an ultrafilter on $S$.

$E((X^{-1} X)^{\uparrow})$ being an ultrafilter on $E(S)$ is a special case of~\cite[Lemma 2.18(3)]{Lawson3}. 
\end{proof}

Next, we record some concepts, first introduced in~\cite{Lenz}, and used extensively in~\cite{Lawson3}. Let $S$ be an inverse $\land$-semigroup. Given $s,t \in S$, write $s \to t$ if $r \land t \neq 0$ for all $r \in S \setminus \{0\}$ satisfying $r \leq s$. If $s \to t$ and $t \to s$, then we write $s \leftrightarrow t$. It is shown in~\cite[Lemma 2.1]{Lawson3} that $\leftrightarrow$ is a $0$-restricted congruence on $S$, and so one can form the quotient semigroup $S/\leftrightarrow$. (Here, \emph{$0$-restricted} refers to the property that for all $s \in S$, we have $s\leftrightarrow 0$ if and only if $s=0$.) Note that if $s\leq t$ for some $s,t \in S$, then $r \land t = r$ for all $r \in S \setminus \{0\}$ satisfying $r \leq s$, and so $s \to t$. The semigroup $S$ is said to be \emph{separative} provided that $s \leq t$ if and only if $s \to t$, for all $s,t \in S$, in which case $\leftrightarrow$ is simply equality. According to~\cite[Proposition 2.4]{Lawson3}, $S/\leftrightarrow$ is separative.

More generally, given $s_1, \dots, s_m, t_1, \dots, t_n \in S$ write $\{s_1, \dots, s_m\} \to \{t_1, \dots, t_n\}$ if for each $1 \leq i \leq m$ and $r \in S\setminus \{0\}$ satisfying $r \leq s_i$, there exists $1 \leq j \leq n$ such that $r \land t_j \neq 0$. The notation $\{s_1, \dots, s_m\} \leftrightarrow \{t_1, \dots, t_n\}$ is defined in the obvious way.

\begin{lemma}[cf.\ \cite{Lawson1}, Proposition 9.4.8] \label{ultrafilt-lem}
Let $S$ be a $\, \Gamma$-graded-Boolean inverse $\land$-semigroup, and let $X$ be a filter on $S$. Then $X$ is an ultrafilter if and only if $X$ is a prime filter.
\end{lemma}

\begin{proof}
Suppose that $X$ is an ultrafilter, let $Y = (X^{-1} X)^{\uparrow}$, and suppose that $r := s \lor t \in X$, for some $s,t \in S$. Then $r^{-1}r = (s^{-1}s) \lor (t^{-1}t)$, by \cite[Lemma 2.4.4(1)]{Lawson1}. Since $r^{-1}r \in Y$, we have $(s^{-1}s) \lor (t^{-1}t) \in E(Y)$. Note that $Y$ is an ultrafilter on $S$, and $E(Y)$ is an ultrafilter on $E(S)$, by Lemma~\ref{idempt-filt-lem}(5). 

According to \cite[Lemma 2.19(1)]{Lawson3}, since $E(Y)$ is an ultrafilter on the inverse semigroup $E(S)$, it is a \emph{tight} filter (i.e., if $x \in E(Y)$ and $\{x\} \to \{y_1, \dots, y_n\}$ for some $y_1, \dots, y_n \in E(S)$, then $y_i \in E(Y)$ for some $i$), and, by \cite[Lemma 2.41]{Lawson3}, $E(Y)$ is therefore a prime filter. Thus $s^{-1}s \in E(Y)$ or $t^{-1}t \in E(Y)$. We may assume that $s^{-1}s \in E(Y) \subseteq Y$, and let $Z = (sY)^{\uparrow}$. Then $0 \notin sY$, by Lemma~\ref{idempt-filt-lem}(2), and so $Z$ is a filter on $S$, by Lemma~\ref{idempt-filt-lem}(1). Now, for all $p \in Y$ we have $rp \geq sp$, from which it follows that $(rY)^{\uparrow} \subseteq (sY)^{\uparrow} = Z$. But $X = (rY)^{\uparrow}$, by Lemma~\ref{idempt-filt-lem}(3), and so $(rY)^{\uparrow} = (sY)^{\uparrow}$, since $X$ is an ultrafilter. In particular, $s = ss^{-1}s \in sY \subseteq X$, and so $X$ is prime.

For the converse, suppose that $X$ is prime. Since $S$ is a lower semilattice, according to \cite[Lemma 9.1.2]{Lawson1}, to conclude that $X$ is an ultrafilter, it suffices to show that if $s\in S$ is such that $s \land t \neq 0$ for all $t \in X$, then $s \in X$.

First, suppose that $s \in S \setminus  X$ and $t \in X$ are such that $s \leq t$. We claim that $s \land r = 0$ for some $r \in X$. By Lemma~\ref{comp-lem}(2), there exists $t \setminus s \in S$ such that $s$ is orthogonal to $t \setminus s$ and $t = s \lor (t \setminus s)$. Since $s \lor (t \setminus s) \in X$, the filter $X$ is prime, and $s \notin X$, we have $t \setminus s \in X$. But since $s$ and $t \setminus s$ are orthogonal, $s \land (t \setminus s) = 0$ (see the proof of Lemma~\ref{comp-lem}(3) for more details on this standard fact), as claimed.

Now, let $s \in S$ be such that $s \land t \neq 0$ for all $t \in X$. Let $t \in X$ be arbitrary, and set $p := s \land t$. Suppose that $p \notin X$. Since $p \leq t$, there exists $r \in X$ such that $p \land r = 0$, by the previous paragraph. Hence
\[0 = p \land r =  (s \land t) \land r = s \land (t \land r).\]
Since $X = X^{\uparrow}$ and $X$ is downward directed, $t \land r \in X$, which contradicts our choice of $s$. Therefore $p \in X$, and since $p \leq s$, we have $s \in X$. It follows that $X$ is an ultrafilter.
\end{proof}

Given an inverse $\land$-semigroup $S$, let $\mathsf{FC}(S)$ denote the set of all finitely-generated compatible (as subsets of $S$) order ideals of $S$. It is shown in~\cite[Proposition 2.5]{Lawson3} that $\mathsf{FC}(S)$ is an inverse semigroup (with the obvious subset multiplication operation). Let $I, J \in \mathsf{FC}(S)$, and let $\{s_1, \dots, s_m\}$ and $\{t_1, \dots, t_n\}$ be generating sets for $I$ and $J$, respectively, as order ideals (so $I = \{s_1, \dots, s_m\}^{\downarrow}$ and $J = \{t_1, \dots, t_n\}^{\downarrow}$). Write $I \equiv J$ if $\{s_1, \dots, s_m\} \leftrightarrow \{t_1, \dots, t_n\}$. It is shown in~\cite[Lemma 2.14]{Lawson3} that $\equiv$ is a congruence on $\mathsf{FC}(S)$, and so one can form the quotient semigroup $\mathsf{FC}(S)/\equiv$. Now, define the \emph{distributive completion} of $S$ to be $\mathsf{D}(S) := \mathsf{FC}(S/\leftrightarrow)/\equiv$, and define $S$ to be \emph{pre-Boolean} if $\mathsf{D}(S)$ is Boolean.

\begin{prop} \label{pre-bool}
Every graded-Boolean inverse $\land$-semigroup is pre-Boolean and separative.
\end{prop}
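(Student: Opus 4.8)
The plan is to prove separativity first, then feed it into the pre-Boolean statement. For separativity, recall it was already observed in the text that $s \leq t$ implies $s \to t$, so only the converse is at issue. If $s \to t$ but $s \not\leq t$, then Lemma~\ref{comp-lem}(2) supplies an $r \in S \setminus \{0\}$ with $r \leq s$ and $r \land t = 0$, directly contradicting $s \to t$. Hence $S$ is separative, so $\leftrightarrow$ is equality on $S$; in particular $S/{\leftrightarrow} = S$ and $\mathsf{D}(S) = \mathsf{FC}(S)/{\equiv}$.

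For the pre-Boolean claim I will use two facts from~\cite{Lawson3}: that the distributive completion of any inverse $\land$-semigroup is a distributive inverse $\land$-semigroup, so that $\mathsf{D}(S)$ automatically satisfies the ungraded analogues of conditions (1) and (2) of Definition~\ref{bool-def}; and that $E(\mathsf{D}(S))$ is, up to isomorphism, the distributive completion of the meet-semilattice $E(S/{\leftrightarrow})$. It then remains only to check condition (3) of Definition~\ref{bool-def} for $\mathsf{D}(S)$, and this is where the single genuinely graded input enters: since every nonzero idempotent of a graded semigroup has degree $\varepsilon$, condition (3) for $S$ already holds in its full ungraded form, i.e., $E(S)$ is a distributive lattice each of whose principal order ideals is a Boolean algebra. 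Such a lattice is separative (if $e \not\leq f$, then the complement of $e \land f$ in the Boolean algebra $\{e\}^{\downarrow}$ is a nonzero element below $e$ meeting $f$ in $0$), and a separative distributive lattice coincides with its own distributive completion; since $S/{\leftrightarrow} = S$, this yields $E(\mathsf{D}(S)) \cong E(S)$, so condition (3) transfers to $\mathsf{D}(S)$. Hence $\mathsf{D}(S)$ is Boolean, i.e., $S$ is pre-Boolean. If one prefers to avoid quoting the description of $E(\mathsf{D}(S))$, the same isomorphism can be produced by hand: $e \mapsto [\{e\}^{\downarrow}]$ is an order isomorphism $E(S) \to E(\mathsf{D}(S))$, surjective because each idempotent of $\mathsf{D}(S)$ is represented by an order ideal generated by idempotents of $E(S)$ and distributivity of $E(S)$ gives $\{e_1, \dots, e_n\} \leftrightarrow \{e_1 \lor \cdots \lor e_n\}$, and injective by separativity of $S$; the lattice and complement structure then transports along it.

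I expect the main difficulty to be organizational rather than mathematical: pinning down in~\cite{Lawson3} the exact statements that $\mathsf{D}(T)$ is a distributive inverse $\land$-semigroup and that $E(\mathsf{D}(T))$ is the distributive completion of $E(T/{\leftrightarrow})$ (or, in the self-contained variant, that every idempotent of $\mathsf{FC}(S)$ reduces modulo $\equiv$ to the class of a principal order ideal of $E(S)$). Conceptually there is little more to say than that all idempotents sit in degree $\varepsilon$: the obstruction to $S$ being Boolean lives entirely in its non-idempotent part, which is precisely what the distributive completion repairs, while the idempotent lattice --- already a generalised Boolean algebra --- passes through untouched.
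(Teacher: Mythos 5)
Your proof is correct, and the separativity half is word-for-word the paper's argument (the observation that $s \leq t$ implies $s \to t$ plus Lemma~\ref{comp-lem}(2) for the converse). For the pre-Boolean half you take a genuinely more hands-on route than the paper. The paper's proof is a two-line reduction: since all nonzero idempotents have degree $\varepsilon$, conditions (1) and (2) of Definition~\ref{bool-def} applied at $\alpha = \varepsilon$ show that $E(S)$ is itself a Boolean inverse $\land$-semigroup; a Boolean inverse $\land$-semigroup is isomorphic to its own distributive completion (from the proof of the Duality Theorem in \cite{Lawson3}), so $E(S)$ is pre-Boolean; and \cite[Theorem 1.5(1)]{Lawson3} says an inverse $\land$-semigroup is pre-Boolean if and only if its idempotent semilattice is. You instead verify the Boolean axioms for $\mathsf{D}(S)$ directly: distributivity of the completion gives the ungraded conditions (1) and (2), and the isomorphism $E(\mathsf{D}(S)) \cong \mathsf{D}(E(S)) \cong E(S)$ transports condition (3). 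This is in effect an inlining of the proof of Lawson's Theorem 1.5(1) in the case at hand, together with a by-hand verification of the fact that a generalised Boolean algebra equals its own distributive completion. What your version buys is self-containedness and an explicit isomorphism $e \mapsto [\{e\}^{\downarrow}]$; what it costs is length and a heavier dependence on locating the precise structural statements about $\mathsf{D}(-)$ in \cite{Lawson3} (which you rightly flag as the main organisational burden, and which do exist there). Two small points of precision: condition (3) of Definition~\ref{bool-def} is already stated in ungraded form, so the ``genuinely graded input'' is really needed for conditions (1) and (2) restricted to $E(S)$ (or, in your variant, nowhere at all, since you extract joins of idempotents from the lattice structure in condition (3)); and in checking $\{e_1, \dots, e_n\} \leftrightarrow \{e_1 \lor \cdots \lor e_n\}$ the test elements $r$ range over all of $S$, not just $E(S)$, so one should invoke condition (2) (distributivity of multiplication over joins) rather than only distributivity of the lattice $E(S)$. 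Neither point affects the validity of the argument.
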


\begin{proof}
Let $S$ be a $\Gamma$-graded-Boolean inverse $\land$-semigroup. It follows immediately from Definition~\ref{bool-def} that $E(S)$ is a Boolean inverse $\land$-semigroup. It is shown in the proof of the Duality Theorem~\cite[Section 2.4]{Lawson3} that every Boolean inverse $\land$-semigroup is isomorphic to its distributive completion, and so $E(S) \cong \mathsf{D}(E(S))$. Thus $\mathsf{D}(E(S))$ is Boolean, and so $E(S)$ is pre-Boolean. It follows that $S$ is pre-Boolean, by~\cite[Theorem 1.5(1)]{Lawson3}, which says that an inverse $\land$-semigroup is pre-Boolean if and only if its semilattice of idempotents is.

To prove that $S$ is separative, let $s,t \in S$. As noted above, $s \leq t$ implies that $s \to t$. For the other direction, suppose that $s \to t$ and $s \not\leq t$. Then, by Lemma~\ref{comp-lem}(3), there exists $r \in S\setminus \{0\}$ such that $r \leq s$ and $r\land t = 0$, contradicting $s \to t$. Thus $s \to t$ implies that $s \leq t$, and so $s \leq t$ if and only if $s \to t$.
\end{proof}

\section{Examples of Graded-Boolean Inverse Semigroups} \label{eg-sect}

Let us next discuss a few ways of building graded-Boolean inverse $\land$-semigroups. By Remark~\ref{bool-rem1}, every Boolean inverse $\land$-semigroup is graded-Boolean. In Proposition~\ref{gr-sym-inv-semgp} below we construct graded-Boolean $\land$-inverse semigroups that are not Boolean. To state that result we first recall (graded) symmetric inverse monoids.

Let $X$ be a nonempty set. For any $A,B \subseteq X$, a bijective function $\phi: A\rightarrow B$ is called a \emph{partial symmetry} of $X$. Here we let $\dom(\phi):=A$ and $\im(\phi): = B$. The set $\mathcal{I}(X)$ of all partial symmetries of $X$ is an inverse semigroup, with respect to composition of relations, known as the \emph{symmetric inverse monoid}. Specifically, for all $\phi,\psi \in \mathcal{I}(X)$, $\phi \psi$ is taken to be the composite of $\phi$ and $\psi$ as functions, restricted to the domain $\psi^{-1}(\im(\psi)\cap \dom(\phi))$. The empty function plays the role of the zero element in $\mathcal{I}(X)$. It is easy to see that $\phi \in \mathcal{I}(X)$ is idempotent if and only if $\dom(\phi)=\im(\phi)$ and $\phi(x) = x$ for all $x \in \dom(\phi)$. Moreover, for all $\phi, \psi \in \mathcal{I}(X)$ we have  $\phi \leq \psi$ if and only if $\dom(\phi) \subseteq \dom(\psi)$, and $\phi(x)=\psi(x)$ for all $x \in \dom(\phi)$. 

It is noted in~\cite[Example 2.25]{Lawson2} that $\mathcal{I}(X)$ is a Boolean inverse $\land$-semigroup for all $X$. The only possible grading on $\mathcal{I}(X)$ is the trivial one, provided $|X| \geq 3$, where $|X|$ denotes the cardinality of $X$ (see~\cite[Proposition 5.1]{RZ}), and so $\mathcal{I}(X)$ is generally not graded-Boolean in any interesting way. There are, however, natural graded-Boolean subsemigroups of $\mathcal{I}(X)$, which we describe next.

Let $X$ be a set and $\Gamma$ a group. We refer to any function $\phi: X \to \Gamma$ as a $\Gamma$-\emph{grading on} $X$, and refer to $X$, together with a specific grading, as a $\Gamma$-\emph{graded} set. Now fix a grading $\phi: X \to \Gamma$. If $\phi(x) = \phi(y)$ for all $x,y \in X$, then we say that the grading $\phi$ is \emph{trivial}. For each $\alpha \in \Gamma$ let $X_{\alpha} := \phi^{-1}(\alpha)$, and for each nonempty $Y \subseteq X$ write $Y_\alpha:= Y\cap X_\alpha$. Also, for each $\alpha \in \Gamma$ let 
\begin{equation}\label{mothercomp} 
\mathcal{I}(X)_{\alpha}:= \big \{\phi \in \mathcal{I}(X) \, \big \vert \, \phi(\dom(\phi)_{\beta}) \subseteq X_{\alpha\beta} \text{ for all } \beta \in \Gamma \big \}, 
\end{equation} 
and set 
\begin{equation}\label{mothersemigroup} 
\mathcal{I}^{\gr}(X):= \bigcup _{\alpha \in \Gamma} \mathcal{I}(X)_{\alpha}. 
\end{equation} 
According to~\cite[Proposition 5.3]{RZ}, $\mathcal{I}^{\gr}(X)$ is a $\Gamma$-graded inverse semigroup, with respect to the grading given in (\ref{mothercomp}). Moreover, it is shown in~\cite[Proposition 5.4]{RZ} that every $\Gamma$-graded inverse semigroup can be embedded in $\mathcal{I}^{\gr}(X)$, for some $\Gamma$-graded set $X$.

\begin{prop} \label{gr-sym-inv-semgp}
Let $X$ be a nonempty $\, \Gamma$-graded set. 
\begin{enumerate}[\upshape(1)]
\item $\mathcal{I}^{\gr}(X)$ is a $\, \Gamma$-graded-Boolean inverse $\land$-semigroup.

\smallskip 

\item If $\, |X| \geq 3$, then $\mathcal{I}^{\gr}(X)$ is Boolean if and only if the grading on $X$ is trivial.
\end{enumerate}
\end{prop}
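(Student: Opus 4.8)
The plan is to deduce everything from three facts: $\mathcal{I}(X)$ is a Boolean inverse $\land$-semigroup \cite[Example 2.25]{Lawson2}, $\mathcal{I}^{\gr}(X)$ is a graded inverse subsemigroup of $\mathcal{I}(X)$ \cite[Proposition 5.3]{RZ}, and the idempotents, natural partial order, meets, and finite compatible joins of $\mathcal{I}^{\gr}(X)$ are all computed exactly as in $\mathcal{I}(X)$. The last point rests on the observation that the idempotents of $\mathcal{I}^{\gr}(X)$ are precisely those of $\mathcal{I}(X)$: each $\id_A$ ($A \subseteq X$) satisfies $\id_A(A \cap X_\beta) = A \cap X_\beta \subseteq X_\beta$, so $\id_A \in \mathcal{I}(X)_\varepsilon \subseteq \mathcal{I}^{\gr}(X)$. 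Consequently $u \leq v$ in $\mathcal{I}^{\gr}(X)$ iff $u \leq v$ in $\mathcal{I}(X)$ (for $u,v \in \mathcal{I}^{\gr}(X)$), which amounts to inclusion of graphs of partial bijections; hence any element of $\mathcal{I}^{\gr}(X)$ that is a meet or join of a subset of $\mathcal{I}^{\gr}(X)$ in the poset $\mathcal{I}(X)$ is also its meet or join in $\mathcal{I}^{\gr}(X)$.

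For part (1), condition (3) of Definition~\ref{bool-def} is then immediate: $E(\mathcal{I}^{\gr}(X)) \cong (\mathcal{P}(X), \subseteq)$ is a Boolean algebra, and each principal order ideal $\{\id_A\}^{\downarrow} \cong \mathcal{P}(A)$ is again a Boolean algebra. For the $\land$-semigroup property, given $\phi,\psi \in \mathcal{I}^{\gr}(X)\setminus\{0\}$, their meet in $\mathcal{I}(X)$ is the restriction of $\phi$ to $W := \{a \in \dom(\phi)\cap\dom(\psi) \mid \phi(a)=\psi(a)\}$; if $\deg(\phi)=\deg(\psi)=:\alpha$ this restriction lies below $\phi$, hence in $\mathcal{I}(X)_\alpha$, whereas if $\deg(\phi)\neq\deg(\psi)$ then $W=\emptyset$, since an $a \in W$ of degree $\delta$ would force $\deg(\phi)\delta = \deg(\phi(a)) = \deg(\psi(a)) = \deg(\psi)\delta$; either way the meet lies in $\mathcal{I}^{\gr}(X)$. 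For condition (1) of Definition~\ref{bool-def}, a nonempty finite compatible $T \subseteq \mathcal{I}(X)_\alpha$ has join $j = \bigcup_{t\in T} t$ in $\mathcal{I}(X)$, and for $a \in \dom(j)$ of degree $\beta$, choosing $t \in T$ with $a \in \dom(t)$ gives $j(a) = t(a) \in X_{\alpha\beta}$; thus $j \in \mathcal{I}(X)_\alpha \subseteq \mathcal{I}^{\gr}(X)$. For condition (2), since the compatibility relation is stable under multiplication on either side, $\{s_1s,\dots,s_ns\}$ and $\{ss_1,\dots,ss_n\}$ are compatible subsets of homogeneous components of $\mathcal{I}^{\gr}(X)$, so their joins exist by condition (1); and the identities $\bigvee_{i=1}^n(s_is) = (\bigvee_{i=1}^ns_i)s$ and $\bigvee_{i=1}^n(ss_i) = s(\bigvee_{i=1}^ns_i)$ hold in $\mathcal{I}(X)$ (it being Boolean) with all terms in $\mathcal{I}^{\gr}(X)$, hence hold in $\mathcal{I}^{\gr}(X)$.

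For part (2), the backward direction is quick: if $\phi: X \to \Gamma$ is constant with value $g$, then $X_\beta = \emptyset$ for $\beta \neq g$, forcing $\mathcal{I}(X)_\varepsilon = \mathcal{I}(X)$ and $\mathcal{I}(X)_\alpha = \{0\}$ for $\alpha \neq \varepsilon$, so $\mathcal{I}^{\gr}(X) = \mathcal{I}(X)$ is Boolean. For the forward direction I argue contrapositively. Assume the grading on $X$ is not trivial; fix $y,z \in X$ with $\beta := \phi(y) \neq \phi(z) =: \beta'$, and using $|X| \geq 3$ pick $x \in X \setminus \{y,z\}$. Let $s = \id_{\{x\}} \in E(\mathcal{I}^{\gr}(X))$ (degree $\varepsilon$), and let $t$ be the partial bijection with $\dom(t)=\{y\}$, $t(y)=z$; checking against~\eqref{mothercomp} shows $t \in \mathcal{I}(X)_{\beta'\beta^{-1}} \subseteq \mathcal{I}^{\gr}(X)$ with $\deg(t) = \beta'\beta^{-1} \neq \varepsilon$. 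Since $\{x\}$ is disjoint from $\{y\}$ and from $\{z\}$, the partial bijections $s$ and $t$ have disjoint domains and disjoint ranges, so $s^{-1}t = 0 = st^{-1}$, and $\{s,t\}$ is a nonempty finite compatible subset of $\mathcal{I}^{\gr}(X)$. But it has no upper bound in $\mathcal{I}^{\gr}(X)$: by Lemma~\ref{filter-lemma}(2), any nonzero $r$ with $s \leq r$ and $t \leq r$ would satisfy $\deg(s)=\deg(r)=\deg(t)$, contradicting $\varepsilon = \deg(s) \neq \deg(t)$. Hence $\{s,t\}$ has no join, so $\mathcal{I}^{\gr}(X)$ is not Boolean.

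The only steps demanding a genuine (though short) argument are the closure of $\mathcal{I}^{\gr}(X)$ under the ambient meet — specifically the collapse to $0$ when the degrees differ — and the fact that the ambient join of a homogeneous compatible family stays homogeneous; the one real idea, in part (2), is that a non-trivial grading on $X$ yields an orthogonal (hence compatible) pair of homogeneous elements of distinct degrees, which by Lemma~\ref{filter-lemma}(2) can have no common upper bound, with three distinct points of $X$ being exactly what forces that pair to be orthogonal rather than merely compatible.
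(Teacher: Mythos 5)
Your proof is correct and follows essentially the same route as the paper's: meets, joins, and idempotents of $\mathcal{I}^{\gr}(X)$ are computed as in $\mathcal{I}(X)$ and closure under these operations is checked directly, while part (2) hinges on producing an orthogonal (hence compatible) pair of homogeneous elements of distinct degrees from three points of $X$, which by Lemma~\ref{filter-lemma}(2) can have no join. Your choice of witnesses in (2) --- an idempotent on a third point together with a map between the two points of different degrees --- is a slight streamlining of the paper's construction, which instead picks $x,y$ in distinct components and a third point $z$, and must interchange $x$ and $y$ to ensure the second map has nonidentity degree; otherwise the arguments coincide.
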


\begin{proof}
(1) Let $\phi, \psi \in \mathcal{I}^{\gr}(X)$ be nonzero, let 
\[Y := \big\{x \in \dom(\phi) \cap \dom(\psi) \mid \phi(x) = \psi(x)\big\},\]
and let $\chi \in \mathcal{I}(X)$ be such that $\dom(\chi) = Y = \im(\chi)$ and $\chi(x) = \phi(x) = \psi(x)$ for all $x \in Y$. Then clearly $\chi = \phi \land \psi$. Moreover, if $Y \neq \emptyset$ (in which case $\chi \neq 0$), then necessarily $\deg(\phi) = \deg(\psi)$ and $\chi \in \mathcal{I}(X)_{\deg(\phi)} \subseteq \mathcal{I}^{\gr}(X)$ (e.g., by Lemma~\ref{filter-lemma}(2)). It follows that $\mathcal{I}^{\gr}(X)$ is an inverse $\land$-semigroup.

Next, note that $E(\mathcal{I}^{\gr}(X)) = E(\mathcal{I}(X))$, and therefore satisfies condition (2) in Definition~\ref{bool-def} (see Remark~\ref{bool-rem1}), since $\mathcal{I}(X)$ is Boolean, by~\cite[Example 2.25]{Lawson2}. Hence, to conclude that $\mathcal{I}^{\gr}(X)$ is $\Gamma$-graded-Boolean, it suffices to take an orthogonal pair $\phi, \psi \in \mathcal{I}(X)_{\alpha}$ ($\alpha \in \Gamma$), and show that $\phi \lor \psi \in \mathcal{I}(X)_{\alpha}$. Being orthogonal amounts to $\dom(\phi) \cap \dom (\psi) = \emptyset$ and $\im(\phi) \cap \im (\psi) = \emptyset$, and so the join $\chi \in \mathcal{I}(X)$ of $\phi$ and $\psi$ in $\mathcal{I}(X)$ is defined by $\dom(\chi) := \dom(\phi) \cup \dom(\psi)$, $\im(\chi) := \im(\phi) \cup \im(\psi)$, $\chi(x) := \phi(x)$ for all $x \in \dom(\phi)$, and $\chi(x) := \psi(x)$ for all $x \in \dom(\psi)$. Since $\phi, \psi \in \mathcal{I}(X)_{\alpha}$, clearly also $\phi \lor \psi \in \mathcal{I}(X)_{\alpha}$.

\smallskip

(2) Suppose that the grading on $X$ is trivial. Then $\mathcal{I}^{\gr}(X) = \mathcal{I}(X)_{\varepsilon} = \mathcal{I}(X)$, and so $\mathcal{I}^{\gr}(X)$ is Boolean, by~\cite[Example 2.25]{Lawson2}. The same conclusion also follows from (1).

Conversely, suppose that $\mathcal{I}^{\gr}(X)$ is Boolean and $|X| \geq 3$. Suppose further that $X_{\alpha} \neq \emptyset$ and $X_{\beta} \neq \emptyset$ for distinct $\alpha, \beta \in \Gamma$. Let $x \in X_{\alpha}$, $y \in X_{\beta}$, and $z \in X\setminus \{x,y\}$ be arbitrary, with $z \in X_{\gamma}$ for some $\gamma \in \Gamma$. Interchanging the roles of $x$ and $y$, if necessary, we may assume that $\gamma \neq \beta$. Now let $\phi, \psi \in \mathcal{I}^{\gr}(X)$ be the unique functions such that $\dom(\phi) = \{x\}$, $\im(\phi) = \{x\}$, $\dom(\psi) = \{y\}$, and $\im(\psi) = \{z\}$. Then $\phi \in \mathcal{I}(X)_{\varepsilon}$ and $\psi \in \mathcal{I}(X)_{\gamma\beta^{-1}}$. Since $\dom(\phi) \cap \dom (\psi) = \emptyset$ and $\im(\phi) \cap \im (\psi) = \emptyset$, the elements $\phi$ and $\psi$ are orthogonal, and so must have a join $\phi \lor \psi$ in $\mathcal{I}^{\gr}(X)$, since we assumed it to be Boolean. Then, by Lemma~\ref{filter-lemma}(2),
\[\varepsilon = \deg(\phi) = \deg(\phi \lor \psi) = \deg(\psi) = \gamma\beta^{-1},\]
and so $\gamma = \beta$, contrary to hypothesis. Therefore, it cannot be the case that $X_{\alpha} \neq \emptyset$ for more than one $\alpha \in \Gamma$, and so the grading on $X$ is trivial.
\end{proof}

The previous proposition describes Boolean $\mathcal{I}^{\gr}(X)$ when $|X| \geq 3$; let us discuss the other cases. If $|X| = 1$, then only trivial gradings are possible, and so $\mathcal{I}^{\gr}(X) = \mathcal{I}(X)$ is Boolean. If $|X| = 2$, then it is possible for $\mathcal{I}^{\gr}(X)$ to be both Boolean and non-trivially graded, as we describe in the following example.

\begin{example} \label{symm-inv-eg}
Let $X := \{x,y\}$, and write 
\[\mathcal{I}(X) = \big\{0, 1, \tau, \theta_{xx}, \theta_{xy}, \theta_{yx}, \theta_{yy}\big\},\]
where $\tau$ denotes the one nontrivial permutation of $X$, and $\theta_{ij}$ is the unique element of $\mathcal{I}(X)$ having $\dom(\theta_{ij}) = \{j\}$ and $\im(\theta_{ij}) = \{i\}$ ($i,j \in X$). As mentioned above, $\mathcal{I}(X)$ is Boolean.

Now, define a grading $\phi : X \to \Z_2$, via $\phi(x) := 0$ and $\phi(y) := 1$. Then $\mathcal{I}(X)_0 = \{0, 1, \theta_{xx}, \theta_{yy}\}$ and $\mathcal{I}(X)_1 = \{0, \tau, \theta_{xy}, \theta_{yx}\}$. Hence, in this case, $\mathcal{I}(X) = \mathcal{I}^{\gr}(X)$, and the grading is non-trivial. 
\end{example}

We can build further examples of graded-Boolean inverse $\land$-semigroups using the distributive completion (see Section~\ref{basics-sect} for the notation), as well as a graded version thereof.

\begin{prop} \label{distr-comp}
Let $S$ be a $\, \Gamma$-graded inverse $\land$-semigroup. Let $\, \mathsf{FC}^{\gr}(S)$ denote the set of all homogeneous elements of $\, \mathsf{FC}(S)$, let $\, \equiv^{\gr}$ denote the restriction of $\, \equiv$ to $\, \mathsf{FC}^{\gr}(S)$, and let $\, \mathsf{D}^{\gr}(S) := \mathsf{FC}^{\gr}(S/\leftrightarrow)/\equiv^{\gr}$.

\begin{enumerate}[\upshape(1)]
\item $\mathsf{FC}^{\gr}(S)$ and $\, \mathsf{D}^{\gr}(S)$ are $\, \Gamma$-graded inverse semigroups.

\smallskip 

\item If $S$ has no zero-divisors, then $\, \mathsf{FC}(S) = \mathsf{FC}^{\gr}(S)$ and $\, \mathsf{D}(S) = \mathsf{D}^{\gr}(S)$.

\smallskip 

\item If $S$ is pre-Boolean, then $\, \mathsf{D}^{\gr}(S)$ is a $\, \Gamma$-graded-Boolean inverse $\land$-semigroup.
\end{enumerate}
\end{prop}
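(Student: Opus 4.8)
The plan is to prove the three parts in turn; (1) and (2) are bookkeeping about degrees, while (3) is the substantive part, which I would handle by realising $\mathsf{D}^{\gr}(S)$ inside the Boolean inverse $\land$-semigroup $\mathsf{D}(S)$. First, for (1): I would note that $\leftrightarrow$ is a \emph{graded} congruence — if $s\leftrightarrow t$ with $s\neq 0$, then $s\to t$ applied to $r=s$ gives $s\wedge t\neq 0$, so $\deg(s)=\deg(t)$ by Lemma~\ref{filter-lemma}(2) — hence $S/\leftrightarrow$ is a $\Gamma$-graded separative inverse $\land$-semigroup. Next, for any $\Gamma$-graded inverse $\land$-semigroup $S$, the set $\mathsf{FC}^{\gr}(S)$ is a $\Gamma$-graded inverse subsemigroup of $\mathsf{FC}(S)$: it contains the zero $\{0\}$; it is closed under $I\mapsto I^{-1}$ and under the product of order ideals, since $I\subseteq S_\alpha$, $J\subseteq S_\beta$ force $I^{-1}\subseteq S_{\alpha^{-1}}$ and $IJ\subseteq S_{\alpha\beta}$ (each $S_\gamma$ being downward closed by Lemma~\ref{filter-lemma}(2)); the degree of a nonzero homogeneous order ideal is well defined because $S_\alpha\cap S_\beta=\{0\}$ when $\alpha\neq\beta$; and a subsemigroup of an inverse semigroup closed under inverses is again an inverse semigroup. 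Finally $\equiv^{\gr}$, being the restriction of a congruence to a subsemigroup, is a congruence, and it is graded: if $\{s_1,\dots,s_m\}^{\downarrow}\equiv^{\gr}\{t_1,\dots,t_n\}^{\downarrow}$ are nonzero, then $\{s_1,\dots,s_m\}\to\{t_1,\dots,t_n\}$ forces some $s_i\wedge t_j\neq 0$, giving the two order ideals the common degree $\deg(s_i)=\deg(t_j)$. Thus $\mathsf{D}^{\gr}(S)$, a quotient of a $\Gamma$-graded inverse semigroup by a graded congruence, is a $\Gamma$-graded inverse semigroup.

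For (2), assume $S$ has no zero-divisors. If $\{s_1,\dots,s_m\}$ is compatible with each $s_i\neq 0$, then $s_i^{-1}s_j\in E(S)$ and $s_i^{-1}s_j\neq 0$ (since $s_i^{-1},s_j\neq 0$), so $s_i^{-1}s_j$ is a nonzero idempotent and $\deg(s_i)=\deg(s_j)$ by Lemma~\ref{filter-lemma}(1); hence every element of $\mathsf{FC}(S)$ is homogeneous and $\mathsf{FC}(S)=\mathsf{FC}^{\gr}(S)$. As $\leftrightarrow$ is $0$-restricted, $S/\leftrightarrow$ also has no zero-divisors, so $\mathsf{FC}(S/\leftrightarrow)=\mathsf{FC}^{\gr}(S/\leftrightarrow)$, whence $\equiv\,=\,\equiv^{\gr}$ on it and $\mathsf{D}(S)=\mathsf{D}^{\gr}(S)$.

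For (3), write $S'=S/\leftrightarrow$ and $q\colon \mathsf{FC}(S')\to\mathsf{D}(S)=\mathsf{FC}(S')/\equiv$ for the quotient map; by hypothesis $\mathsf{D}(S)$ is a Boolean inverse $\land$-semigroup. The induced map $\mathsf{D}^{\gr}(S)=\mathsf{FC}^{\gr}(S')/\equiv^{\gr}\to\mathsf{D}(S)$ is an injective graded homomorphism (if $I,J\in\mathsf{FC}^{\gr}(S')$ and $q(I)=q(J)$, then $I\equiv J$, hence $I\equiv^{\gr}J$), and I let $T$ be its image, identifying $\mathsf{D}^{\gr}(S)$ with $T$. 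Everything in (3) then follows from two facts about $T$: \textbf{(a)} $E(T)=E(\mathsf{D}(S))$, and \textbf{(b)} $T$ is an order ideal of $\mathsf{D}(S)$. For \textbf{(a)}: if $L\in E(\mathsf{FC}(S'))$ and $s\in L$, then $s^{-1}\in L=L^{-1}$, so $ss^{-1}\in L\cdot L=L$, and compatibility of $s$ with $ss^{-1}$ inside $L$ yields $s^{-1}=s^{-1}(ss^{-1})\in E(S')$; hence $L\subseteq E(S')\subseteq(S')_\varepsilon$ and $L\in\mathsf{FC}^{\gr}(S')$. Since any $f\in E(\mathsf{D}(S))$ equals $ff^{-1}=q(K)q(K)^{-1}=q(KK^{-1})$ with $KK^{-1}\in E(\mathsf{FC}(S'))\subseteq\mathsf{FC}^{\gr}(S')$, we get $E(\mathsf{D}(S))\subseteq T$, and the reverse inclusion $E(T)\subseteq E(\mathsf{D}(S))$ is clear. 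For \textbf{(b)}: if $c\leq a$ in $\mathsf{D}(S)$ with $a=q(I)$, $I\in\mathsf{FC}^{\gr}(S')$, write $c=af$ with $f=q(L)\in E(\mathsf{D}(S))$ and $L\subseteq E(S')\subseteq(S')_\varepsilon$; then $c=q(IL)$ with $IL$ homogeneous, so $c\in T$.

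Granting \textbf{(a)} and \textbf{(b)}, the natural partial order, compatibility, orthogonality and meets of elements of $T$ agree with those computed in $\mathsf{D}(S)$ (all are defined via idempotents and products, which $T$ shares with $\mathsf{D}(S)$); in particular $T$ is an inverse $\land$-semigroup, since the $\mathsf{D}(S)$-meet of $a,b\in T$ lies below $a$, hence in $T$ by \textbf{(b)}. Also $E(\mathsf{D}^{\gr}(S))=E(\mathsf{D}(S))$ is, as a poset, the distributive lattice of idempotents of the Boolean inverse $\land$-semigroup $\mathsf{D}(S)$, each of whose principal order ideals is a Boolean algebra — which is Definition~\ref{bool-def}(3) for $\mathsf{D}^{\gr}(S)$. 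For Definition~\ref{bool-def}(1), let $\{a_1,\dots,a_n\}\subseteq T_\alpha$ be nonempty and compatible; it has a join $a$ in the Boolean semigroup $\mathsf{D}(S)$. Orthogonalising (replace $a_i$ by $c_i=a_i\setminus(a_i\wedge(a_1\vee\dots\vee a_{i-1}))$, using Lemma~\ref{comp-lem}(1)) gives pairwise orthogonal $c_i\leq a_i$ with $a=\bigvee_i c_i$, and each $c_i\in T_\alpha$ by \textbf{(b)}; writing $c_i=q(K_i)$ with $K_i\subseteq(S')_\alpha$, orthogonality together with $0$-restrictedness of $\equiv$ gives $K_i^{-1}K_j=K_iK_j^{-1}=\{0\}$, so $\bigcup_i K_i$ is a finitely generated compatible order ideal in $\mathsf{FC}^{\gr}(S')$, and $q(\bigcup_i K_i)=\bigvee_i c_i=a$ because $q$ carries the (orthogonal) joins of $\mathsf{FC}(S')$ to joins in $\mathsf{D}(S)$, by the construction of the distributive completion (see~\cite{Lawson3}); hence $a\in T_\alpha$, and since $T$ carries the order of $\mathsf{D}(S)$, $a$ is the join in $T=\mathsf{D}^{\gr}(S)$. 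Definition~\ref{bool-def}(2) then follows from the corresponding identities in $\mathsf{D}(S)$ together with the closure of $T$ under products and under the joins just produced. I expect the main obstacle to be precisely this last step — showing the ``homogeneous part'' $T$ is closed under the compatible joins supplied by $\mathsf{D}(S)$ — which is what forces one to use the explicit description of joins in the distributive completion rather than a soft appeal to Lemma~\ref{filter-lemma}; the rest is routine.
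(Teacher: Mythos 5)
Your proposal is correct, and parts (1) and (2) follow the paper's argument almost verbatim: you show $\leftrightarrow$ and $\equiv$ respect degrees via Lemma~\ref{filter-lemma}, and in (2) you use the absence of nonzero orthogonal pairs to force homogeneity (the paper applies Lemma~\ref{filter-lemma}(1) to all pairs of nonzero elements of an ideal rather than just to generators, but this is the same observation). The real divergence is in part (3). The paper's route is shorter: it quotes Lawson's result that the natural partial order on $\mathsf{D}(S)$ is set inclusion, so the meet and join of a finite compatible $X \subseteq \mathsf{D}^{\gr}(S)_{\alpha}$ are $\bigcap X$ and $\bigcup X$, which are visibly homogeneous of degree $\alpha$; closure under meets and joins, and hence all three conditions of Definition~\ref{bool-def}, drop out at once. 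You instead prove the structural facts that $E(\mathsf{D}^{\gr}(S)) = E(\mathsf{D}(S))$ and that $\mathsf{D}^{\gr}(S)$ is an order ideal of $\mathsf{D}(S)$, then recover closure under compatible joins by orthogonalising via Lemma~\ref{comp-lem}(1) and reassembling the join from homogeneous representatives. Both arguments ultimately rest on the same concrete description of joins in the distributive completion (your final appeal to ``$q$ carries orthogonal joins to joins'' is exactly the inclusion-order fact the paper cites, so you have not actually avoided it), and your version is longer; but your observations (a) and (b) are a cleaner way to see that conditions (2) and (3) of Definition~\ref{bool-def} transfer, where the paper only gestures at ``analogous considerations.'' The minor unverified steps in your orthogonalisation (that $\bigvee_i c_i = \bigvee_i a_i$ and that the $c_i$ are pairwise orthogonal) are standard and easily filled.
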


\begin{proof}
(1) Clearly, $\mathsf{FC}^{\gr}(S)$ is closed under multiplication and inverses, and hence is an inverse subsemigroup of $\mathsf{FC}(S)$. Also, $I \mapsto \alpha$, where $I \neq \{0\}$, and $\deg(s) = \alpha \in \Gamma$ for any nonzero $s \in I$, defines a $\Gamma$-grading on $\mathsf{FC}^{\gr}(S)$. Since $\equiv$ is a congruence on $\mathsf{FC}(S)$, it is easy to see that $\equiv^{\gr}$ is also a congruence, and so $\mathsf{FC}^{\gr}(S)/\equiv^{\gr}$ is an inverse semigroup.

Next, we show that the semigroup $S' := S/\leftrightarrow$ is $\Gamma$-graded. If $s,t \in S \setminus \{0\}$ are such that $s \to t$, then, in particular, $s \land t \neq 0$.  Hence, by Lemma~\ref{filter-lemma}(2), $\deg(s) = \deg(s\land t) = \deg(t)$. Thus if we denote by $[s]$ the congruence class of $s \in S\setminus \{0\}$ under $\leftrightarrow$, then every element of $[s]$ has the same degree. Therefore, we can turn $S'$ into a $\Gamma$-graded semigroup via $[s] \mapsto \alpha$, where $s \neq 0$, and $\deg(t) = \alpha \in \Gamma$ for any $t \in [s]$.

Since $S'$ is $\Gamma$-graded, $\mathsf{FC}^{\gr}(S')$ is $\Gamma$-graded as well (as shown in the first paragraph). Now, let $I,J \in \mathsf{FC}^{\gr}(S')$ be such that $I \equiv^{\gr} J$. Then, either $I$ and $J$ are both $\{0\}$, or both nonzero. In the latter case, by the argument in the previous paragraph, $\deg(I) = \deg(J)$. This, once again, allows us turn $\mathsf{D}^{\gr}(S) = \mathsf{FC}^{\gr}(S')/\equiv^{\gr}$ into a $\Gamma$-graded semigroup, as above.

\smallskip 

(2) Suppose that $S$ has no zero-divisors, and let $I \in \mathsf{FC}(S) \setminus \{\{0\}\}$. Then, in particular, $S$ has no nonzero orthogonal elements, and so every nonzero element of $I$ has the same degree, by Lemma~\ref{filter-lemma}(1). That is, $I$ is homogeneous, and hence $I \in \mathsf{FC}^{\gr}(S)$. Since $\mathsf{FC}^{\gr}(S)$ is a subsemigroup of $\mathsf{FC}(S)$, we conclude that $\mathsf{FC}(S) = \mathsf{FC}^{\gr}(S)$. Given that $S$ has no zero-divisors, neither does $S' := S/\leftrightarrow$, and so our argument also shows that $\mathsf{FC}(S') = \mathsf{FC}^{\gr}(S')$. It follows that $\mathsf{D}(S) = \mathsf{D}^{\gr}(S)$ as well.

\smallskip 

(3) Suppose that $S$ is pre-Boolean. Since $\mathsf{FC}^{\gr}(S/\leftrightarrow)$ is a subsemigroup of $\mathsf{FC}(S/\leftrightarrow)$, and since $\equiv^{\gr}$ is the restriction of $\equiv$ from $\mathsf{FC}(S/\leftrightarrow)$ to $\mathsf{FC}^{\gr}(S/\leftrightarrow)$, we can view $\mathsf{D}^{\gr}(S)$ as a subsemigroup of the Boolean inverse $\land$-semigroup $\mathsf{D}(S)$.

Now let $\alpha \in \Gamma$, and let $X, Y \in \mathsf{D}^{\gr}(S)_{\alpha}$. By~\cite[Lemma 2.10]{Lawson3} (see also~\cite[Lemma 2.17]{Lawson3}), the natural partial order on $\mathsf{D}(S)$ is simply set inclusion. Hence the meet of $X$ and $Y$ in $\mathsf{D}(S)$ is $X \cap Y$, while join is $X \cup Y$, in case $X$ and $Y$ are compatible. Both are elements of $\mathsf{D}^{\gr}(S)_{\alpha}$, since $X,Y \in \mathsf{D}^{\gr}(S)_{\alpha}$, and therefore, necessarily, $X \cap Y$ and $X \cup Y$ (in case $X$ and $Y$ are orthogonal) are the meet and join, respectively, of $X$ and $Y$ in $\mathsf{D}^{\gr}(S)$. Thus $\mathsf{D}^{\gr}(S)$ is an inverse $\land$-semigroup and satisfies condition (1) in Definition~\ref{bool-def}. Since $E(\mathsf{D}^{\gr}(S))$ can be viewed as a subsemigroup of $E(\mathsf{D}(S))$, considerations analogous to those above show that the properties in condition (2) of Definition~\ref{bool-def} pass from $E(\mathsf{D}(S))$ to $E(\mathsf{D}^{\gr}(S))$. (Note that, by Lemma~\ref{filter-lemma}(2), the principal order ideals of $\mathsf{D}(S)$ and $\mathsf{D}^{\gr}(S)$, and hence also of $E(\mathsf{D}(S))$ and $E(\mathsf{D}^{\gr}(S))$, coincide.) Therefore $\mathsf{D}^{\gr}(S)$ is $\Gamma$-graded-Boolean.
\end{proof}

Graph inverse semigroups, first introduced in~\cite{AH}, constitute a rich and well-studied class of inverse semigroups, constructed from directed graphs (see~\cite{MM} for an overview). According to~\cite[Theorem 4.10]{Lawson3}, graph inverse semigroups, over graphs in which each vertex has finite in-degree, are pre-Boolean. These semigroups are also $\Z$-graded, where $\Z$ denotes the group of the integers (see~\cite[Section 8.1]{RZ} for the details). Hence, by Proposition~\ref{distr-comp}(3), $\mathsf{D}^{\gr}(S)$ is a $\Z$-graded-Boolean inverse $\land$-semigroup, for any graph inverse semigroup $S$ constructed from a graph where each vertex has finite in-degree.

Generally speaking, a graph inverse semigroup has zero-divisors, and $\mathsf{D}(S)$ need not coincide with $\mathsf{D}^{\gr}(S)$, for such a semigroup $S$. However, the graph inverse semigroup corresponding to the graph with one vertex and one loop does not have zero-divisors. More specifically, this particular semigroup, known as the \emph{bicyclic semigroup}, can be presented by generators and relations as the monoid
\[P_1 := \langle x,y \mid xy = 1\rangle \cup \{0\},\]
though it is usually viewed in the literature as an inverse monoid without zero. By Proposition~\ref{distr-comp}(2,3), $\mathsf{D}(P_1) = \mathsf{D}^{\gr}(P_1)$ is a $\Z$-graded-Boolean inverse $\land$-semigroup.

We conclude this section by recording a (fairly trivial) way of creating graded-Boolean semigroups from existing ones.

\begin{prop}
Let $S$ be a $\, \Gamma$-graded inverse semigroup.
\begin{enumerate}[\upshape(1)]
\item If $S$ is an inverse $\land$-semigroup, then so is $S_{\varepsilon}$.

\smallskip 

\item If $S$ is $($$\Gamma$-graded-$)$Boolean, then $S_{\varepsilon}$ is both Boolean and $\, \Gamma$-graded-Boolean.
\end{enumerate}
\end{prop}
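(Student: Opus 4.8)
The plan is to reduce both parts to properties of $S$ at the identity degree $\varepsilon$, exploiting the fact that every idempotent of $S$ lies in $S_\varepsilon$, so that $E(S_\varepsilon) = E(S)$. First I would record the consequence that the natural partial order on $S_\varepsilon$ is exactly the restriction to $S_\varepsilon$ of the natural partial order on $S$; hence, whenever a meet or join of a subset of $S_\varepsilon$ is formed in $S$ and happens to land in $S_\varepsilon$, it is also the meet or join of that subset computed in $S_\varepsilon$. By Lemma~\ref{filter-lemma}(4), such meets and joins always do land back in $S_\varepsilon$, so this is the mechanism that transports everything down to $S_\varepsilon$.

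For part (1): recall (as already noted in Section~\ref{gradedsemisubsection}) that $S_\varepsilon$ is an inverse semigroup. Given $s,t \in S_\varepsilon$, I would take $w = s \land t$ formed in $S$; either $w = 0 \in S_\varepsilon$, or $w \neq 0$, in which case $w \leq s$ with $w, s \neq 0$ forces $\deg(w) = \deg(s) = \varepsilon$ by Lemma~\ref{filter-lemma}(2), so again $w \in S_\varepsilon$. By the remarks above, $w$ is the meet of $s$ and $t$ in $S_\varepsilon$, proving that $S_\varepsilon$ is an inverse $\land$-semigroup.

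For part (2): I would equip $S_\varepsilon$ with the $\Gamma$-grading inherited from $S$, which is trivial since $(S_\varepsilon)_\alpha = S_\varepsilon \cap S_\alpha$ equals $S_\varepsilon$ when $\alpha = \varepsilon$ and $\{0\}$ otherwise. By the remark following Definition~\ref{bool-def}, it suffices to verify Lawson's three (ungraded) Boolean axioms for $S_\varepsilon$. Axiom (1) follows by applying Definition~\ref{bool-def}(1) to $S$ with $\alpha = \varepsilon$ and then invoking Lemma~\ref{filter-lemma}(4) to see the resulting join stays in $S_\varepsilon$; axiom (2) follows similarly from Definition~\ref{bool-def}(2) for $S$, noting that $s_i s$ and $s s_i$ are homogeneous of degree $\varepsilon$; axiom (3) is immediate from $E(S_\varepsilon) = E(S)$ together with Definition~\ref{bool-def}(3) for $S$. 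This shows $S_\varepsilon$ is Boolean. It is then also $\Gamma$-graded-Boolean, because for the trivial grading conditions (1) and (2) of Definition~\ref{bool-def} are vacuous for $\alpha \neq \varepsilon$ (the only nonempty subset of $\{0\}$ being $\{0\}$, whose join is $0$) and coincide with the ungraded axioms just verified when $\alpha = \varepsilon$.

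I do not anticipate a genuine obstacle here, as the statement is essentially bookkeeping. The one point that needs care is the compatibility of the two natural partial orders, i.e.\ the identity $E(S_\varepsilon) = E(S)$, which is what licenses moving meets and joins between $S$ and $S_\varepsilon$; once that is in place, everything reduces to direct appeals to Definition~\ref{bool-def} and Lemma~\ref{filter-lemma}.
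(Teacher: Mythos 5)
Your proposal is correct and follows essentially the same route as the paper: part (1) uses Lemma~\ref{filter-lemma}(2) to show the meet computed in $S$ lands in $S_\varepsilon$, and part (2) uses Lemma~\ref{filter-lemma}(4) for joins, the fact that $S_\varepsilon$ is a homogeneous subsemigroup for Definition~\ref{bool-def}(2), and $E(S_\varepsilon)=E(S)$ for Definition~\ref{bool-def}(3). Your explicit remark that meets and joins formed in $S$ that land in $S_\varepsilon$ remain meets and joins in $S_\varepsilon$ is the same (largely implicit) mechanism the paper relies on.
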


\begin{proof}
(1) Suppose that $S$ is an inverse $\land$-semigroup. Since, as noted in Section~\ref{gradedsemisubsection}, $S_{\varepsilon}$ is an inverse semigroup, it suffices to show that each pair of elements in $S_{\varepsilon}$ has a meet. Letting $s,t \in S_{\varepsilon}$, we have $s\land t \in S$. If $s\land t = 0$, then $s\land t \in S_{\varepsilon}$. If $s\land t \neq 0$, then $s \neq 0 \neq t$, and so Lemma~\ref{filter-lemma}(2) implies that $s\land t \in S_{\varepsilon}$. In either case $s\land t$ is necessarily the meet of $s$ and $t$ in $S_{\varepsilon}$. It follows that $S_{\varepsilon}$ is an inverse $\land$-semigroup.

\smallskip

(2) Suppose that $S$ is either Boolean or $\Gamma$-graded-Boolean. Then $S_{\varepsilon}$ has joins for orthogonal pairs, by Lemma~\ref{filter-lemma}(4). Since $E(S_{\varepsilon}) = E(S)$, we conclude, by Remark~\ref{bool-rem1}, that $S_{\varepsilon}$ is Boolean, and hence also $\Gamma$-graded-Boolean.
\end{proof}

\section{Duality} \label{duality-sect}

This section is devoted to proving our main result, that the categories $\GIS$ and $\GG$, of $\Gamma$-graded-Boolean inverse $\land$-semigroups and $\Gamma$-graded Hausdorff ample groupoids, respectively (see Definitions~\ref{is-cat-def} and~\ref{gp-cat-def} for the details), are dually equivalent. The construction is divided into a number of lemmas and propositions. We begin by defining the main ingredients.

Given an inverse semigroup $S$, let 
\[\GP(S):=\big\{X \mid X \text{~is an ultrafilter on~} (S, \leq)\big\},\] 
where $\leq$ is the natural partial order. For all $X,Y \in \GP(S)$ define 
\[X^{-1} := \big\{x^{-1} \mid x \in X\big\},\] 
\[XY :=\big \{xy \mid x \in X, y \in Y\big\},\] 
and 
\[X\cdot Y := (XY)^{\uparrow}.\] 
According to~\cite[Section 2.1]{Lawson3}, $\GP(S)$ is a groupoid with respect to the operation $\cdot$, where $(X,Y) \in \GP(S)^{(2)}$ provided that $X^{-1}\cdot X = Y\cdot Y^{-1}$, and where elements of the form $X^{-1}\cdot X$ ($X \in \GP(S))$ constitute the unit space. It follows from~\cite[Lemma 2.8(2)]{Lawson3} that an ultrafilter on $S$ belongs to the unit space of $\GP(S)$ if and only if it contains an idempotent.

Given an ample groupoid $\mathscr{G}$, let
\[\SG(\mathscr{G}) :=\big\{X\mid X \text{~is a compact slice of~} \mathscr{G}\big\}.\]
Then $\SG(\mathscr{G})$ is an inverse semigroup under the operations given in (\ref{pofgtryhf}) and (\ref{pofgtryhf2}) (see~\cite[Proposition 2.2.4]{paterson1}), with $\emptyset$ as the zero element. The idempotent set $E(\SG(\mathscr{G}))$ consists of compact open subsets of $\mathscr{G}^{(0)}$. Supposing that $\mathscr{G}$ is $\Gamma$-graded, for some group $\Gamma$, we say that a slice $X \subseteq \mathscr{G}$ is \emph{homogeneous} if $X \subseteq \mathscr{G}_\alpha$ for some $\alpha \in \Gamma$, and set
\begin{equation*}
\SG^{\gr}(\mathscr{G}) := \big\{X\mid X \text{~is a homogeneous compact slice of~} \mathscr{G}\big\}.
\end{equation*}
Since $\mathscr{G}_{\alpha^{-1}} = \mathscr{G}_{\alpha}^{-1}$ for each $\alpha \in \Gamma$, it follows that $\SG^{\gr}(\mathscr{G})$ is an inverse subsemigroup of $\SG(\mathscr{G})$. Moreover, it is easy to see that defining $\phi :\SG^{\gr}(\mathscr{G}) \backslash \{\emptyset\} \rightarrow \Gamma$ by $\phi(X) := \alpha$, whenever $X\subseteq \mathscr{G}_\alpha$, turns $\SG^{\gr}(\mathscr{G})$ into a $\Gamma$-graded inverse semigroup. The natural partial order on $\SG(\mathscr{G})$ is simply set inclusion (see \cite[Section 2.2]{paterson1}). Since the idempotent slices of $\mathscr{G}$ are subsets of $\mathscr{G}_{\varepsilon}$, and are therefore homogeneous, the natural partial order on $\SG^{\gr}(\mathscr{G})$ is also set inclusion. Finally, note that since each $\mathscr{G}_\alpha$ is clopen, and since $\mathscr{G}$ is ample, the elements of $\SG^{\gr}(\mathscr{G})$ form a base for the topology on $\mathscr{G}$.

\begin{lemma} \label{gr-inv-obj}
Let $\mathscr{G}$ be a $\, \Gamma$-graded Hausdorff ample groupoid. Then $\, \SG^{\gr}(\mathscr{G})$ is a $\, \Gamma$-graded-Boolean inverse $\land$-semigroup, with intersection as the meet operation and union as the join operation.
\end{lemma}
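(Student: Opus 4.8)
The plan is to verify directly that $\SG^{\gr}(\mathscr{G})$ is an inverse $\land$-semigroup and that it satisfies the three conditions of Definition~\ref{bool-def}, leaning on the two facts recorded immediately above the statement: that $\SG^{\gr}(\mathscr{G})$ is a $\Gamma$-graded inverse semigroup, and that its natural partial order is set inclusion. The recurring bookkeeping device is that the idempotents of $\SG(\mathscr{G})$ are exactly the compact open subsets of $\mathscr{G}^{(0)}$, all of which lie in $\mathscr{G}_{\varepsilon}$ and hence in $\SG^{\gr}(\mathscr{G})$; since $\SG^{\gr}(\mathscr{G})$ is an inverse subsemigroup of $\SG(\mathscr{G})$, this gives $E(\SG^{\gr}(\mathscr{G})) = E(\SG(\mathscr{G}))$, so the compatibility and orthogonality relations on $\SG^{\gr}(\mathscr{G})$ are just the restrictions of those on $\SG(\mathscr{G})$. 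For the $\land$-semigroup property, given homogeneous compact slices $X$ and $Y$ I would observe that $X \cap Y$ is open, is a slice (any subset of a slice is), and --- because $\mathscr{G}$ is Hausdorff, so $Y$ is closed --- is a closed subset of the compact set $X$, hence compact; it is moreover homogeneous, being either empty or contained in a single $\mathscr{G}_{\alpha}$ (the grading pieces being pairwise disjoint; cf.\ Lemma~\ref{filter-lemma}(2)). As inclusion is the order, $X \cap Y$ is the greatest lower bound, so meet is intersection. This is the one place where Hausdorffness of $\mathscr{G}$ itself is essential.

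Next I would record the concrete meaning of compatibility of compact slices $X$ and $Y$: that $X^{-1}Y$ and $XY^{-1}$ lie in $\mathscr{G}^{(0)}$ says precisely that $x \in X$, $y \in Y$, $\domr(x) = \domr(y)$ force $x = y$, and symmetrically for ranges. Consequently, if $X_1, \dots, X_n \in \SG^{\gr}(\mathscr{G})_{\alpha} \setminus \{\emptyset\}$ are pairwise compatible, then $\bigcup_i X_i$ is again a slice, it is open, it is compact as a finite union of compact sets, and it lies in $\mathscr{G}_{\alpha}$; hence it belongs to $\SG^{\gr}(\mathscr{G})_{\alpha}$, and since inclusion is the order it is the join. (The case where some $X_i$ is $\emptyset$ is trivial.) This gives condition (1) of Definition~\ref{bool-def} and simultaneously shows that joins of compatible homogeneous families are computed as set unions.

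For condition (2), take compatible $X_1, \dots, X_n \in \SG^{\gr}(\mathscr{G})_{\alpha}$ and any $X \in \SG^{\gr}(\mathscr{G})$. In every inverse semigroup, compatibility is preserved under left and right multiplication (most painlessly seen via Wagner--Preston, where it reduces to the transparent description of $\sim$ in a symmetric inverse monoid), so $\{X_iX\}$ and $\{XX_i\}$ are compatible families, and they are homogeneous (of degrees $\alpha\deg(X)$ and $\deg(X)\alpha$ when $X \neq \emptyset$, trivially otherwise); hence by condition (1) their joins exist and equal $\bigcup_i (X_iX)$ and $\bigcup_i (XX_i)$. Since the groupoid product distributes over unions of sets, these unions equal $\big(\bigcup_i X_i\big)X$ and $X\big(\bigcup_i X_i\big)$, and --- joins being unions --- these are $\big(\bigvee_i X_i\big)X$ and $X\big(\bigvee_i X_i\big)$, which is exactly condition (2).

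Finally, for condition (3): $E(\SG^{\gr}(\mathscr{G})) = E(\SG(\mathscr{G}))$ is the family of compact open subsets of $\mathscr{G}^{(0)}$, and since $\mathscr{G}^{(0)}$ is Hausdorff with a basis of compact open sets, this family is closed under finite unions, finite intersections, and relative complements, hence is a distributive lattice; and for a compact open $e \subseteq \mathscr{G}^{(0)}$ the principal order ideal $\{e\}^{\downarrow}$ is the Boolean algebra of clopen subsets of the compact Hausdorff zero-dimensional space $e$. This is Lawson's verification for $\SG(\mathscr{G})$, transferred verbatim. Assembling the four points establishes the claim. I expect the only genuinely delicate part to be the compatibility bookkeeping in conditions (1) and (2) --- confirming that unions of compatible homogeneous compact slices remain compact slices of the right degree --- and remembering that the Hausdorff hypothesis on $\mathscr{G}$ is exactly what keeps meets inside the semigroup.
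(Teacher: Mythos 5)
Your proof is correct, and it takes a more self-contained route than the paper, which disposes of the lemma in a few lines: the paper simply cites Lawson's result (\cite[Lemma 2.32(4)]{Lawson3}) that $\SG(\mathscr{G})$ is a Boolean inverse $\land$-semigroup with intersection as meet and union as join, observes that the intersection of two homogeneous compact slices is homogeneous (so $\SG^{\gr}(\mathscr{G})$ is an inverse $\land$-subsemigroup), notes that the join of a nonempty finite compatible homogeneous family stays homogeneous (via Lemma~\ref{filter-lemma}(4)), and finishes with $E(\SG(\mathscr{G})) = E(\SG^{\gr}(\mathscr{G}))$. You instead re-verify each axiom of Definition~\ref{bool-def} from first principles on the groupoid side --- unpacking compatibility of slices, checking that unions of compatible homogeneous slices are compact slices of the right degree, and proving distributivity of the groupoid product over unions --- only deferring to Lawson for condition (3). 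Everything you do checks out (in particular the preservation of compatibility under multiplication, which you invoke for condition (2), is a standard inverse-semigroup fact, and your use of Hausdorffness to keep $X \cap Y$ compact is exactly where that hypothesis enters). What the paper's argument buys is brevity, since the whole Boolean structure of $\SG(\mathscr{G})$ is inherited wholesale and one only needs to see that meets, joins of homogeneous compatible families, and idempotents never leave the graded subsemigroup; what your argument buys is independence from \cite[Lemma 2.32(4)]{Lawson3} and an explicit record of where each topological hypothesis (Hausdorffness of $\mathscr{G}$, ampleness, Hausdorffness of $\mathscr{G}^{(0)}$) is actually used.
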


\begin{proof}
According to \cite[Lemma 2.32(4)]{Lawson3}, $\SG(\mathscr{G})$ is a Boolean inverse $\land$-semigroup, with intersection and union as the meet and join operations, respectively. Since the intersection of two homogeneous (compact) slices of $\mathscr{G}$ is clearly homogeneous, it follows that $\SG^{\gr}(\mathscr{G})$ is an inverse $\land$-subsemigroup of $\SG(\mathscr{G})$. Since $\SG(\mathscr{G})$ has joins of orthogonal pairs, $\SG^{\gr}(\mathscr{G})$ must have joins of homogenous orthogonal pairs, by Lemma~\ref{filter-lemma}(4). Finally, since $E(\SG(\mathscr{G})) = E(\SG^{\gr}(\mathscr{G}))$, we see, from Definition~\ref{bool-def}, that $\SG^{\gr}(\mathscr{G})$ is a $\Gamma$-graded-Boolean inverse $\land$-semigroup.
\end{proof}

\begin{lemma}[cf.\ \cite{Lawson3}, Lemma 2.36] \label{gr-inv-obj2}
Let $\mathscr{G}$ be a $\, \Gamma$-graded Hausdorff ample groupoid, and for each $y \in \mathscr{G}$ let $\mathcal{X}_y := \{Y \in \SG^{\gr}(\mathscr{G}) \mid y \in Y\}$.\begin{enumerate}[\upshape(1)]
\item $\mathcal{X}_y$ is an ultrafilter on $\, \SG^{\gr}(\mathscr{G})$, for each $y \in \mathscr{G}$.

\smallskip 

\item Every ultrafilter on $\, \SG^{\gr}(\mathscr{G})$ is of the form $\mathcal{X}_y$, for some $y \in \mathscr{G}$.

\smallskip 

\item $\mathcal{X}_y^{-1} = \mathcal{X}_{y^{-1}}$ for all $y \in \mathscr{G}$.

\smallskip 

\item For all $x,y \in \mathscr{G}$, if $(x,y) \in \mathscr{G}^{(2)}$, then $\mathcal{X}_x \cdot \mathcal{X}_y = \mathcal{X}_{xy}$.

\smallskip 

\item $\mathcal{X}_x = \mathcal{X}_y$ if and only if $x=y$, for all $x,y \in \mathscr{G}$.
\end{enumerate}
\end{lemma}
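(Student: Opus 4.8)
The plan is to establish the five parts roughly in order, proving $(1)$ and $(3)$ before $(4)$, which depends on them. I would work throughout in the poset $(\SG^{\gr}(\mathscr{G}),\subseteq)$, using repeatedly that its elements form a base for the topology on $\mathscr{G}$, that $\mathscr{G}$ is Hausdorff so that compact slices are closed, that $\SG^{\gr}(\mathscr{G})$ is an inverse $\land$-semigroup with meet equal to intersection (Lemma~\ref{gr-inv-obj}), the filter facts of Lemma~\ref{idempt-filt-lem}, and the standard characterization (from Lawson) that a filter $F$ on an inverse $\land$-semigroup is an ultrafilter precisely when, for every element $Z$, either $Z\in F$ or $Y\wedge Z=0$ for some $Y\in F$. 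With this in hand, for $(1)$ I would check that $\mathcal{X}_y$ is a filter (nonempty by the base property, upward closed by definition, downward directed since $Y,Z\in\mathcal{X}_y$ gives $y\in Y\cap Z\in\SG^{\gr}(\mathscr{G})$, and $\emptyset\notin\mathcal{X}_y$), and then apply the characterization: if $Z$ is a homogeneous compact slice with $y\notin Z$, then $\mathscr{G}\setminus Z$ is open about $y$, so the base property yields $Y\in\mathcal{X}_y$ with $Y\subseteq\mathscr{G}\setminus Z$, i.e.\ $Y\wedge Z=0$. Part $(3)$ is immediate from inversion being a homeomorphism of $\mathscr{G}$ with $\mathscr{G}_\alpha^{-1}=\mathscr{G}_{\alpha^{-1}}$, which gives $Y\in\mathcal{X}_y^{-1}\iff y^{-1}\in Y\iff Y\in\mathcal{X}_{y^{-1}}$.

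For $(2)$, given an ultrafilter $W$, its members are compact hence closed, and the members of a filter have the finite intersection property, so fixing $Y_0\in W$ the family $\{Y\cap Y_0\mid Y\in W\}$ consists of closed subsets of the compact space $Y_0$ with the finite intersection property; therefore $\bigcap_{Y\in W}Y\neq\emptyset$, and I would pick $y$ in it. If some $y'\neq y$ also lay in this intersection, Hausdorffness and the base property would give a homogeneous compact slice $V$ with $y\in V$ and $y'\notin V$; since $y\in Y\cap V$ for every $Y\in W$, the ultrafilter characterization forces $V\in W$, so $y'\in\bigcap_{Y\in W}Y\subseteq V$, a contradiction. Hence $\bigcap_{Y\in W}Y=\{y\}$, so $W\subseteq\mathcal{X}_y$, and then $W=\mathcal{X}_y$ by maximality of $W$ (using that $\mathcal{X}_y$ is a filter, by $(1)$).

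For $(4)$ I would first show, for an arbitrary $z\in\mathscr{G}$, that $\mathcal{X}_z^{-1}\cdot\mathcal{X}_z=\mathcal{X}_{\domr(z)}$: every member of $\mathcal{X}_z^{-1}\cdot\mathcal{X}_z=(\mathcal{X}_z^{-1}\mathcal{X}_z)^{\uparrow}$ contains some $U^{-1}V$ with $U,V\in\mathcal{X}_z$, hence contains $z^{-1}z=\domr(z)$, so $\mathcal{X}_z^{-1}\cdot\mathcal{X}_z\subseteq\mathcal{X}_{\domr(z)}$; both sides are ultrafilters (the left by Lemma~\ref{idempt-filt-lem}(3) applied to the ultrafilter $\mathcal{X}_z$, the right by $(1)$), so they coincide. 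Applying this to $z=x$, and via $(3)$ to $z=y^{-1}$, gives $\mathcal{X}_x^{-1}\cdot\mathcal{X}_x=\mathcal{X}_{\domr(x)}$ and $\mathcal{X}_y\cdot\mathcal{X}_y^{-1}=\mathcal{X}_{\ran(y)}$, so when $(x,y)\in\mathscr{G}^{(2)}$ the equality $\domr(x)=\ran(y)$ makes $(\mathcal{X}_x,\mathcal{X}_y)$ composable in $\GP(\SG^{\gr}(\mathscr{G}))$, whence $\mathcal{X}_x\cdot\mathcal{X}_y$ is defined and is an ultrafilter (being a morphism of the groupoid $\GP(\SG^{\gr}(\mathscr{G}))$). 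Since every member of $\mathcal{X}_x\cdot\mathcal{X}_y=(\mathcal{X}_x\mathcal{X}_y)^{\uparrow}$ contains some $UV$ with $x\in U\in\mathcal{X}_x$ and $y\in V\in\mathcal{X}_y$, it contains $xy$, so $\mathcal{X}_x\cdot\mathcal{X}_y\subseteq\mathcal{X}_{xy}$, and equality follows as both are ultrafilters. (Alternatively, the reverse containment can be obtained by using continuity of multiplication to shrink slices about $x$ and $y$ into a given slice about $xy$.) Finally, $(5)$ is immediate: if $x\neq y$, then Hausdorffness and the base property give a homogeneous compact slice $U$ with $x\in U$ and $y\notin U$, so $U\in\mathcal{X}_x\setminus\mathcal{X}_y$; the converse is trivial.

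The step I expect to be the main obstacle is $(2)$: producing a point of $\bigcap_{Y\in W}Y$ rests on the compactness/finite-intersection-property argument, and showing this intersection is a single point needs both the Hausdorff hypothesis and the base of homogeneous compact slices, packaged through the ultrafilter characterization. The surrounding filter bookkeeping — that members of a filter have the finite intersection property and the ultrafilter characterization itself — is where one must take care to invoke the correct lemmas; once those are in place, $(1)$, $(3)$, $(4)$ and $(5)$ are essentially formal, with $(4)$ needing only the additional check that $\mathcal{X}_x\cdot\mathcal{X}_y$ is defined before it is identified.
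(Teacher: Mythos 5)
Your proposal is correct and follows essentially the same route as the paper: filters via the base of homogeneous compact slices plus Lawson's ultrafilter criterion for (1), the finite-intersection-property/compactness argument for (2), and the containment-into-an-ultrafilter argument for (3)--(5). The only notable (and welcome) difference is that in (4) you explicitly verify composability of $\mathcal{X}_x$ and $\mathcal{X}_y$ in $\GP(\SG^{\gr}(\mathscr{G}))$ by computing $\mathcal{X}_z^{-1}\cdot\mathcal{X}_z=\mathcal{X}_{\domr(z)}$, a point the paper leaves implicit, while your singleton-intersection step in (2) is harmless but unnecessary.
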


\begin{proof}
These statements can be proved in essentially the same way as their non-graded analogues in~\cite[Lemma 2.36]{Lawson3}, but we provide the arguments for the convenience of the reader.

\smallskip

(1) By Lemma~\ref{gr-inv-obj}, $\SG^{\gr}(\mathscr{G})$ is closed under finite intersections. Note also that $\mathcal{X}_y \neq \emptyset$ for each $y \in \mathscr{G}$, since the elements of $\SG^{\gr}(\mathscr{G})$ form a base for the topology on $\mathscr{G}$. Then, from the fact that the natural partial order on this inverse semigroup is set inclusion, it follows that $\mathcal{X}_y$ is a filter for each $y \in \mathscr{G}$. Now, according to \cite[Lemma 9.1.2]{Lawson1} (see, alternatively, \cite[Lemma 2.6(2)]{Lawson3}), a filter $F$ on a lower semilattice $S$ is an ultrafilter if and only if $F$ contains all $s \in S$ such that $s\land t \neq 0$ for all $t \in F$. We shall use this criterion on $\mathcal{X}_y$.

Thus suppose that $X \in \SG^{\gr}(\mathscr{G})$ is such that $X \cap Y \neq \emptyset$ for all $Y \in \mathcal{X}_y$. Note that since $X$ is compact and $\mathscr{G}$ is Hausdorff, $X$ is necessarily closed. Thus, to conclude that $X \in \mathcal{X}_y$, it suffices to show that $X \cap N \neq \emptyset$ for all open neighbourhoods $N$ of $y$. Since the elements of $\SG^{\gr}(\mathscr{G})$ form a base for the topology on $\mathscr{G}$, there exists $Y \in \SG^{\gr}(\mathscr{G})$ such that $y \in Y \subseteq N$. But then $Y \in \mathcal{X}_y$, and so $X \cap Y \neq \emptyset$, by hypothesis. Hence $X \cap N \neq \emptyset$, as desired.

\smallskip

(2) Let $\mathcal{F}$ be an ultrafilter on $\SG^{\gr}(\mathscr{G})$, let $X \in \mathcal{F}$, and let $\mathcal{F}' = \{X \cap Y \mid Y \in \mathcal{F}\}$. Since $\mathcal{F}$ is a filter, $\mathcal{F}' \subseteq \mathcal{F}$, and any finite intersection of elements of $\mathcal{F}'$ is nonempty. Moreover, each element of $\mathcal{F}'$ is closed, as a compact subset of a Hausdorff space. Thus $\mathcal{F}'$ consists of closed subsets of the compact set $X$, and has the finite intersection property. It follows that $\mathcal{F}'$ has a nonempty intersection. Taking $y$ to be any element of that intersection, we see that $\mathcal{F} \subseteq \mathcal{X}_y$. Since $\mathcal{F}$ is an ultrafilter, and $\mathcal{X}_y$ is a filter, by (1), we have $\mathcal{F} = \mathcal{X}_y$.

\smallskip

(3) Let $y \in \mathscr{G}$. According to \cite[Lemma 2.36(3)]{Lawson3},
\[\big\{Y \in \SG(\mathscr{G}) \mid y \in Y\big\}^{-1} = \big\{Y^{-1} \in \SG(\mathscr{G}) \mid y \in Y\big\} = \big\{Y \in \SG(\mathscr{G}) \mid y^{-1} \in Y\big\}.\] 
Restricting to homogeneous compact slices gives $\mathcal{X}_y^{-1} = \mathcal{X}_{y^{-1}}$.

\smallskip

(4) Let $x,y \in \mathscr{G}$ be such that $(x,y) \in \mathscr{G}^{(2)}$, and let $X \in \mathcal{X}_x$ and $Y \in \mathcal{X}_y$. Then $xy \in XY \in \SG^{\gr}(\mathscr{G})$, and so $XY \in \mathcal{X}_{xy}$. It follows that 
\[\mathcal{X}_x\mathcal{X}_y = \big\{XY \mid X \in \mathcal{X}_x, Y \in \mathcal{X}_y\big\} \subseteq \mathcal{X}_{xy},\]
and hence $\mathcal{X}_x \cdot \mathcal{X}_y = (\mathcal{X}_x \mathcal{X}_y)^{\uparrow} \subseteq \mathcal{X}_{xy}$, since $\mathcal{X}_{xy}$ is a filter, by (1). Finally, since $\mathcal{X}_x \cdot \mathcal{X}_y$ is an ultrafilter (being an element of $\GP(\SG^{\gr}(\mathscr{G})$), we conclude that $\mathcal{X}_x \cdot \mathcal{X}_y = \mathcal{X}_{xy}$.

\smallskip

(5) Certainly, if $x=y$, then $\mathcal{X}_x = \mathcal{X}_y$. For the converse, suppose that $x,y \in \mathscr{G}$ are distinct. Since $\mathscr{G}$ is Hausdorff, and since the elements of $\SG^{\gr}(\mathscr{G})$ form a base for the topology on $\mathscr{G}$, there exist disjoint $X, Y \in \SG^{\gr}(\mathscr{G})$ such that $x\in X$ and $y \in Y$. Then $X \in \mathcal{X}_x \setminus \mathcal{X}_y$, and so $\mathcal{X}_x \neq \mathcal{X}_y$.
\end{proof}

\begin{lemma} \label{gr-inv-morph}
Let $\phi: \mathscr{G} \to \mathscr{H}$ be a morphism in $\, \GG$. Then $\phi^{-1} : \SG^{\gr}(\mathscr{H}) \to \SG^{\gr}(\mathscr{G})$ is a morphism in $\, \GIS$. Moreover, if $\phi$ is a functor $\, ($rather than a partial functor$)$, then $\phi^{-1}$ is a proper homomorphism.
\end{lemma}

\begin{proof}
Let $\mathscr{G}'$ denote the domain of $\phi$, and let $X \in \SG^{\gr}(\mathscr{H})$ be arbitrary. Since $\phi$ is continuous, $\phi^{-1}(X)$ is an open subset of $\mathscr{G}'$, and hence also of $\mathscr{G}$. Since $\phi$ is proper, $\phi^{-1}(X)$ is a compact subset of ($\mathscr{G}'$ and) $\mathscr{G}$. Since $\phi$ is star-injective, we therefore conclude that $\phi^{-1}(X) \in \SG(\mathscr{G})$ (see \cite[Proposition 2.17]{Lawson2} for more details), and so $\phi^{-1} : \SG(\mathscr{H}) \to \SG(\mathscr{G})$ is a well-defined function. Supposing that $X \subseteq \mathscr{H}_{\alpha}$ for some $\alpha \in \Gamma$, we must have $\phi^{-1}(X) \subseteq \mathscr{G}_{\alpha}$, since $\phi$ is graded. Hence $\phi^{-1}$ restricts to a graded function $\phi^{-1} : \SG^{\gr}(\mathscr{H}) \to \SG^{\gr}(\mathscr{G})$.

To prove that $\phi^{-1}$ is a homomorphism, let $X,Y \in \SG^{\gr}(\mathscr{H})$. Suppose that $z \in \phi^{-1}(XY)$, and write $\phi(z)=xy$ for some $x \in X$ and $y \in Y$. Since $\phi: \mathscr{G}' \to \mathscr{H}$ is a covering functor, by \cite[Lemma 2.16]{Lawson2}, there exist $u,v \in \mathscr{G}'$ such that $z = uv$, $\phi(u) = x$, and $\phi(v) = y$. Thus $u \in \phi^{-1}(X)$ and $v \in \phi^{-1}(Y)$, from which we conclude that $z \in \phi^{-1}(X)\phi^{-1}(Y)$. Hence $\phi^{-1}(XY) \subseteq \phi^{-1}(X)\phi^{-1}(Y)$. For the opposite inclusion, suppose that $z \in \phi^{-1}(X)\phi^{-1}(Y)$, and write $z = uv$, where $\phi(u) \in X$ and $\phi(v) \in Y$. Since $\phi: \mathscr{G}' \to \mathscr{H}$ is a functor, we have $\phi(z) =\phi(u)\phi(v) \in XY$, and so $z \in  \phi^{-1}(XY)$. Therefore $\phi^{-1}(XY) = \phi^{-1}(X)\phi^{-1}(Y)$.

Since the natural partial order on each of $\SG^{\gr}(\mathscr{H})$ and $\SG^{\gr}(\mathscr{G})$ is set inclusion, $\phi^{-1}$ preserves meets. Also, clearly, $\phi^{-1} : \SG^{\gr}(\mathscr{H}) \to \SG^{\gr}(\mathscr{G})$ restricts to a function $E(\SG^{\gr}(\mathscr{H})) \to E(\SG^{\gr}(\mathscr{G}))$. Since the natural partial order on each of  $E(\SG^{\gr}(\mathscr{H}))$ and $E(\SG^{\gr}(\mathscr{G}))$ is likewise set inclusion, this function $E(\SG^{\gr}(\mathscr{H})) \to E(\SG^{\gr}(\mathscr{G}))$ is a lattice homomorphism   that preserves complements in principal order ideals. Thus $\phi^{-1}$ is a morphism in $\GIS$.

Finally, let us suppose that $\phi$ is a functor, and show that $\phi^{-1}$ is proper. To that end, let $\mathcal{F}$ be an ultrafilter on $\SG^{\gr}(\mathscr{G})$. Then, by Lemma~\ref{gr-inv-obj2}(1,2), $\mathcal{F} = \mathcal{X}_y$, for some $y \in \mathscr{G}$. Likewise, given that $\phi(y)$ is defined, $\mathcal{X}_{\phi(y)}$ is an ultrafilter on $\SG^{\gr}(\mathscr{H})$. The inverse image of $\mathcal{F}$ under $\phi^{-1}$ consists of all $X \in \SG^{\gr}(\mathscr{H})$ such that $\phi^{-1}(X) \in \mathcal{F}$. But $\phi^{-1}(X) \in \mathcal{F}$ if and only if $y \in \phi^{-1}(X)$ if and only if $\phi(y) \in X$ if and only if $X \in \mathcal{X}_{\phi(y)}$. Thus the inverse image of $\mathcal{F} = \mathcal{X}_y$ under $\phi^{-1}$ is the ultrafilter $\mathcal{X}_{\phi(y)}$, from which it follows that $\phi^{-1}$ is proper.
\end{proof}

\begin{prop} \label{gp-to-sg-fuct}
Sending each object $\mathscr{G}$ to $\, \SG^{\gr}(\mathscr{G})$, and each morphism $\phi$ to $\phi^{-1}$, defines a contravariant functor $\, \SG^{\gr} : \GG \to \GIS$.
\end{prop}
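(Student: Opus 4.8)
The plan is to build the functor directly out of the two preceding lemmas and then verify the two structural identities required of a contravariant functor. On objects, Lemma~\ref{gr-inv-obj} already shows that each $\mathscr{G} \in \GG$ is sent to an object $\SG^{\gr}(\mathscr{G})$ of $\GIS$. On morphisms, Lemma~\ref{gr-inv-morph} shows that each morphism $\phi : \mathscr{G} \to \mathscr{H}$ in $\GG$ is sent to a morphism $\phi^{-1} : \SG^{\gr}(\mathscr{H}) \to \SG^{\gr}(\mathscr{G})$ in $\GIS$, which already has the correct (reversed) source and target. So it remains only to check that $\SG^{\gr}$ preserves identities and reverses the order of composition.

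For identities: if $\id_{\mathscr{G}} : \mathscr{G} \to \mathscr{G}$ is the identity functor, then $\id_{\mathscr{G}}^{-1}(X) = X$ for every compact slice $X$ of $\mathscr{G}$, so $\SG^{\gr}(\id_{\mathscr{G}})$ is the identity homomorphism on $\SG^{\gr}(\mathscr{G})$. For composition: given morphisms $\phi : \mathscr{G} \to \mathscr{H}$ and $\psi : \mathscr{H} \to \mathscr{K}$ in $\GG$, with composite $\psi \circ \phi : \mathscr{G} \to \mathscr{K}$ (again a morphism in $\GG$), the elementary identity $(\psi \circ \phi)^{-1}(X) = \phi^{-1}\big(\psi^{-1}(X)\big)$ for set-theoretic preimages along a composite of maps, applied to each compact slice $X$ of $\mathscr{K}$, yields $\SG^{\gr}(\psi \circ \phi) = \SG^{\gr}(\phi) \circ \SG^{\gr}(\psi)$. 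Together these show that $\SG^{\gr} : \GG \to \GIS$ is a contravariant functor.

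There is essentially no obstacle here once Lemmas~\ref{gr-inv-obj} and~\ref{gr-inv-morph} are available: the content is purely the bookkeeping fact that taking preimages is contravariant. The only two points that might deserve an explicit word are (i) that $\phi^{-1}$ genuinely lands in $\SG^{\gr}(\mathscr{G})$ rather than merely in $\SG(\mathscr{G})$ — this is recorded in the proof of Lemma~\ref{gr-inv-morph}, using that $\phi$ is a graded, proper, continuous covering functor, so that preimages of homogeneous compact slices are again homogeneous compact slices; and (ii) that $\GG$ is indeed closed under composition of its morphisms — which holds because a composite of continuous proper covering functors is again continuous, proper, and a covering functor, and because $\phi(\mathscr{G}_{\alpha}) \subseteq \mathscr{H}_{\alpha}$ and $\psi(\mathscr{H}_{\alpha}) \subseteq \mathscr{K}_{\alpha}$ give $(\psi \circ \phi)(\mathscr{G}_{\alpha}) \subseteq \mathscr{K}_{\alpha}$ for all $\alpha \in \Gamma$.
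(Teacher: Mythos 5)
Your proposal is correct and follows the same route as the paper: the paper's proof simply observes that the assignment respects identities and composition and then cites Lemmas~\ref{gr-inv-obj} and~\ref{gr-inv-morph}, which is exactly your argument with the preimage bookkeeping spelled out explicitly. The extra details you supply (preimages reverse composition, and $\GG$ is closed under composition) are accurate elaborations of what the paper leaves implicit.
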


\begin{proof}
The mapping in question clearly respects identity morphisms and composition of morphisms. Thus the desired conclusion follows from Lemmas~\ref{gr-inv-obj} and~\ref{gr-inv-morph}.
\end{proof}

Now we turn to building a functor in the opposite direction. The next lemma provides the main slice construction for our groupoids, and records some basic observations that can be carried over from the non-graded context unaltered. 

\begin{lemma}[\cite{Lawson3}, Lemmas 2.10, 2.15(1)] \label{inv-slice}
Let $S$ be an inverse $\land$-semigroup, and for each $s \in S$ let $\mathcal{Y}_s := \{Y \in \GP(S) \mid s \in Y\}$.
\begin{enumerate}[\upshape(1)]
\item $\mathcal{Y}_s \cap \mathcal{Y}_t = \mathcal{Y}_{s\land t}$ for all $s,t \in S$.

\smallskip 

\item The restrictions of $\domr$ and $\ran$ to $\mathcal{Y}_s$ are injective, for each $s \in S$.

\smallskip 

\item $\mathcal{Y}_s^{-1} = \mathcal{Y}_{s^{-1}}$ for each $s \in S$.

\smallskip 

\item $\mathcal{Y}_s\mathcal{Y}_t = \mathcal{Y}_{st}$ for all $s,t \in S$.

\smallskip 

\item If $S$ is separative, then for all $s_1, \dots, s_n \in S$, the restrictions of $\domr$ and $\ran$ to $\, \bigcup_{i=1}^n \mathcal{Y}_{s_i}$ are injective if and only if $\, \{s_1, \dots, s_n\}$ is compatible.
\end{enumerate}
\end{lemma}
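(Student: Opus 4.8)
The plan is to follow Lawson's non-graded argument, since no grading intervenes in this lemma; throughout I would use the descriptions $\domr(Y)=(Y^{-1}Y)^{\uparrow}$, $\ran(Y)=(YY^{-1})^{\uparrow}$, the fact that $Y\mapsto Y^{-1}$ is an order-preserving involution of $\GP(S)$ with $\ran(Y)=\domr(Y^{-1})$, and the recovery formula $Y=(y\cdot\domr(Y))^{\uparrow}$ for $y\in Y$ coming from Lemma~\ref{idempt-filt-lem}(2). Part (1) is immediate: if $Y\in\mathcal{Y}_s\cap\mathcal{Y}_t$, downward-directedness of $Y$ gives $r\in Y$ with $r\le s,t$, hence $r\le s\wedge t$ (so $s\wedge t\neq 0$) and $s\wedge t\in Y^{\uparrow}=Y$; conversely $s\wedge t\in Y$ forces $s,t\in Y$ by upward closure. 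Part (3) follows from $s\in Y\iff s^{-1}\in Y^{-1}$ together with bijectivity of inversion on ultrafilters.

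For (2), if $Y,Z\in\mathcal{Y}_s$ with $\domr(Y)=\domr(Z)$ then $Y=(s\cdot\domr(Y))^{\uparrow}=(s\cdot\domr(Z))^{\uparrow}=Z$ by the recovery formula, so $\domr$ is injective on $\mathcal{Y}_s$; the statement for $\ran$ is the $\domr$ statement applied to $\mathcal{Y}_s^{-1}=\mathcal{Y}_{s^{-1}}$ via (3). For (4), the inclusion $\mathcal{Y}_s\mathcal{Y}_t\subseteq\mathcal{Y}_{st}$ is automatic: a composable product $Y\cdot Z$ with $Y\in\mathcal{Y}_s$, $Z\in\mathcal{Y}_t$ contains $st$ and is a filter, so $st\neq 0$ and $Y\cdot Z\in\mathcal{Y}_{st}$. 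For the reverse inclusion, given an ultrafilter $W\ni st$, note $st\le ss^{-1}$ and $st\le t^{-1}t$ force $ss^{-1},t^{-1}t\in W$; set $Z:=(s^{-1}W)^{\uparrow}$ and $Y:=(Wt^{-1})^{\uparrow}$. Then $Z$ and $Y$ are ultrafilters by Lemma~\ref{idempt-filt-lem}(4) (applied to $s^{-1}$, respectively to $t$ after inverting), $t=(s^{-1}s)t\in Z$, and since $st\cdot t^{-1}=s(tt^{-1})\le s$ lies in $Wt^{-1}$ also $s\in Y$; finally $(Y,Z)$ is composable and $Y\cdot Z=W$, which reduces to the identities $\domr(Z)=\domr(W)$, $\ran(Y)=\ran(W)$, $\domr(Y)=\ran(Z)$ obtained by tracking how left multiplication by $s^{-1}$ and right multiplication by $t^{-1}$ act on the domain and range idempotents of $W$.

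For (5), the implication "compatible $\Rightarrow$ injective" again reduces by inversion to the $\domr$ case. Given $Y\in\mathcal{Y}_{s_i}$, $Z\in\mathcal{Y}_{s_j}$ with $\domr(Y)=\domr(Z)=:D$, I would use the identity $s_i^{-1}s_i\wedge s_j^{-1}s_j=s_i^{-1}s_j$ for compatible $s_i,s_j$ (a consequence of $s_is_j^{-1}=s_js_i^{-1}$ and $(s_i^{-1}s_j)^2=s_i^{-1}s_j$): since $s_i^{-1}s_i,s_j^{-1}s_j\in D$ and $D$ is downward directed, $s_i^{-1}s_j\in D$, hence $s_i(s_i^{-1}s_j)=(s_is_i^{-1})s_j\le s_j$ lies in $(s_iD)^{\uparrow}=Y$, so $s_j\in Y$, and then $Y=Z$ by (2). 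The converse is the contrapositive and is the main obstacle. Starting from an incompatible pair, $s_i^{-1}s_j\notin E(S)$ or $s_is_j^{-1}\notin E(S)$, and by passing to inverses (swapping $\domr$ and $\ran$) it suffices to treat $s_i^{-1}s_j\notin E(S)$ and violate injectivity of $\ran$. Writing $a=s_i^{-1}s_j$ and $t=s_j(a^{-1}a)$, one checks $t\le s_j$, $t\neq 0$, $\ran(t)=\ran(s_i)\wedge\ran(s_j)$, and $t\not\le s_i$ (since $s_i^{-1}t=a\notin E(S)$, whereas $t\le s_i$ would make $s_i^{-1}t$ idempotent). Now separativity enters: $t\not\le s_i$ means $t\not\to s_i$, so there is $r\neq 0$ with $r\le t$ and $r\wedge s_i=0$; then $r\le s_j$ and $\ran(r)\le\ran(s_i)\wedge\ran(s_j)$. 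Picking any ultrafilter $Y\ni r$ (so $s_j\in Y$) and, via Lemma~\ref{idempt-filt-lem}(4) applied to $s_i^{-1}$ and the unit $D':=\ran(Y)$, an ultrafilter $Z$ with $s_i\in Z$ and $\ran(Z)=D'=\ran(Y)$, one gets $Y\neq Z$ (else downward-directedness of $Z$ would give a nonzero element below $r\wedge s_i=0$), so $\ran$ is not injective on $\mathcal{Y}_{s_i}\cup\mathcal{Y}_{s_j}$.

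I expect the delicate point to be exactly the bookkeeping in the reverse direction of (5): producing the right auxiliary element $t$, verifying $\ran(t)=\ran(s_i)\wedge\ran(s_j)$ and $t\not\le s_i$, and then reading off two distinct ultrafilters with a common range; everything else is routine filter manipulation resting on Lemma~\ref{idempt-filt-lem}.
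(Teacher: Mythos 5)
The paper does not actually prove this lemma — it is imported verbatim from Lawson \cite{Lawson3} (Lemmas 2.10 and 2.15(1)) — so your proposal is a self-contained reconstruction rather than a parallel to anything in the text. Parts (1), (2), (3) are correct, and both directions of (5) are essentially right and follow the standard route: in particular your converse argument for (5) — the auxiliary element $t = s_j(a^{-1}a)$ with $a = s_i^{-1}s_j$, the use of separativity to extract $r \neq 0$ with $r \leq t$ and $r \wedge s_i = 0$, and the production of two distinct ultrafilters with a common range — is the genuinely delicate part and you handle it correctly.

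There is, however, a real error in your part (4). The claim ``$st \leq ss^{-1}$ and $st \leq t^{-1}t$ force $ss^{-1}, t^{-1}t \in W$'' is false: in an inverse semigroup any element below an idempotent is itself idempotent (if $x = eu$ with $e,u \in E(S)$ then $x \in E(S)$), so $st \leq ss^{-1}$ fails whenever $st \notin E(S)$; and an ultrafilter $W$ containing $st$ need not contain any idempotent at all (in $\mathcal{I}(X)$, the ultrafilter of all partial bijections sending a fixed $y$ to a fixed $x \neq y$ contains no partial identities), so $ss^{-1} \in W$ fails in general. This matters because your construction of $Z = (s^{-1}W)^{\uparrow}$ as an ultrafilter invokes Lemma~\ref{idempt-filt-lem}(4) applied to the element $s^{-1}$ and the ultrafilter $W$, whose hypothesis $(s^{-1})^{-1}s^{-1} = ss^{-1} \in W$ is exactly the false statement. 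The standard fix is to route the construction through the unit $D := \domr(W)$, which \emph{does} contain $(st)^{-1}(st) = t^{-1}s^{-1}st \leq t^{-1}t$ and hence $t^{-1}t$; then $Z := (tD)^{\uparrow}$ is an ultrafilter containing $t$ by Lemma~\ref{idempt-filt-lem}(4), $Y := (s \cdot \ran(Z))^{\uparrow}$ is likewise an ultrafilter containing $s$, and $Y \cdot Z = (stD)^{\uparrow} = W$ by the recovery formula. Relatedly, your closing assertion that $Y \cdot Z = W$ ``reduces to'' the three domain/range identities is not sufficient by itself — two distinct ultrafilters can share both domain and range — and must be supplemented by the observation that $st \in Y \cdot Z$ together with $W = (st \cdot \domr(W))^{\uparrow}$ from Lemma~\ref{idempt-filt-lem}(2).
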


\begin{lemma} \label{gr-gp-obj}
Let $S$ be a nonzero $\, \Gamma$-graded inverse $\land$-semigroup, graded via $\phi : S\setminus \{0\} \to \Gamma$. For each $X \in \GP(S)$, define $\psi: \GP(S) \to \Gamma$ by $\psi(X) := \phi(x)$, where $x \in X$ is arbitrary. 
\begin{enumerate}[\upshape(1)]
\item $\GP(S)$ is a Hausdorff \'etale groupoid in the topology generated by basic open sets of the form $\mathcal{Y}_s$, as in Lemma~\ref{inv-slice}. 

\smallskip

\item $\GP(S)$ is a $\, \Gamma$-graded groupoid, with respect to $\psi$.

\smallskip

\item $\GP(S)_{\varepsilon} = \GP(S_{\varepsilon})$.

\smallskip

\item If $S$ is graded-Boolean, then $\, \GP(S)$ is ample.
\end{enumerate}
\end{lemma}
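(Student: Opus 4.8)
The plan is to prove each of the four parts in turn, leaning heavily on Lawson's non-graded results (in particular \cite[Lemmas 2.10, 2.15, 2.22]{Lawson3}) for those assertions that concern only the underlying groupoid $\GP(S)$, and isolating the genuinely new content, which is the interaction with the grading $\psi$.

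First I would dispatch part (1) essentially by citation: Lawson shows that for a separative inverse $\land$-semigroup $S$, the sets $\mathcal{Y}_s$ form a base for a Hausdorff topology making $\GP(S)$ an \'etale groupoid; the Hausdorff property uses Lemma~\ref{inv-slice}(1) together with the separativity of $S$ (if $X \ne X'$ are ultrafilters, pick $s \in X \setminus X'$ and, via Lemma~\ref{comp-lem}(2) applied in the separative quotient, separate them by disjoint $\mathcal{Y}_s$, $\mathcal{Y}_r$). Here I would note that although Lemma~\ref{inv-slice} is stated for general inverse $\land$-semigroups, the topological statement needs separativity, and a general $\Gamma$-graded inverse $\land$-semigroup need not be separative; so this part of the lemma may actually require the graded-Boolean hypothesis (or at least separativity) as well, and I would check whether the statement should really be grouped with part (4). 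Assuming the hypotheses suffice, part (1) is then just a pointer to \cite{Lawson3}.

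For part (2), the key point is that $\psi$ is well-defined: by Lemma~\ref{filter-lemma}(3), every element of a filter $X$ has the same degree, so $\psi(X) := \phi(x)$ does not depend on the choice of $x \in X$. Continuity of $\psi$ is immediate since $\psi$ is constant on each basic open set $\mathcal{Y}_s$ (with value $\phi(s)$) and $\Gamma$ is discrete. The functoriality $\psi(X \cdot Y) = \psi(X)\psi(Y)$ for $(X,Y) \in \GP(S)^{(2)}$ follows from Lemma~\ref{inv-slice}(4): if $s \in X$ and $t \in Y$ with $st \ne 0$ (which holds since $X\cdot Y$ is a nonempty filter), then $st \in XY \subseteq X\cdot Y$, so $\psi(X\cdot Y) = \phi(st) = \phi(s)\phi(t) = \psi(X)\psi(Y)$. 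Part (3) then amounts to checking both inclusions: $\GP(S_\varepsilon) \subseteq \GP(S)_\varepsilon$ needs that an ultrafilter on $S_\varepsilon$ is still an ultrafilter on $S$ (filters on $S$ live in a single degree by Lemma~\ref{filter-lemma}(3), so a degree-$\varepsilon$ ultrafilter of $S$ is contained in $S_\varepsilon$ and maximality transfers), and conversely $\GP(S)_\varepsilon \subseteq \GP(S_\varepsilon)$ because a degree-$\varepsilon$ ultrafilter of $S$, being contained in $S_\varepsilon$, is in particular an ultrafilter of $S_\varepsilon$ by the same single-degree observation. I would spell out the maximality transfer carefully, as it is the one spot where one must use that $S_\varepsilon = S_\varepsilon^\downdownarrows$ inside $S$ in the appropriate sense.

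Part (4) is where the real work lies, and I expect it to be the main obstacle. Here I would invoke \cite[Lemma 2.22]{Lawson3} (or the relevant ampleness statement from Lawson's duality), which says that for a Boolean inverse $\land$-semigroup the groupoid $\GP(S)$ is ample; the content to supply is that the \emph{graded}-Boolean hypothesis still yields compactness of enough slices. The strategy is: (i) show $\GP(S)$ is ample by exhibiting, for each $s \in S$, that $\mathcal{Y}_s$ is compact — this uses condition (1) of Definition~\ref{bool-def} to produce joins of compatible homogeneous families, mirroring Lawson's argument that the existence of finite compatible joins forces each $\mathcal{Y}_s$ to be compact (a net/ultrafilter in $\mathcal{Y}_s$ with no convergent subnet would produce an infinite family of pairwise-orthogonal, hence compatible, homogeneous subelements of $s$ violating some finiteness consequence of the Boolean axioms); and (ii) verify $\GP(S)^{(0)}$ is Hausdorff, which follows from part (1). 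The subtlety is that Lawson's proof uses joins of arbitrary finite compatible subsets of $S$, whereas Definition~\ref{bool-def}(1) only guarantees joins of finite compatible subsets of a single $S_\alpha$; I would check that the families arising in the compactness argument are automatically homogeneous — which they are, by Lemma~\ref{filter-lemma}(1), since compatible non-orthogonal pairs share a degree, and one can organise the argument degree by degree. Once compactness of each $\mathcal{Y}_s$ is established, ampleness is immediate: the $\mathcal{Y}_s$ are compact slices forming a base. I would present part (4) as the substantive proof and the first three parts as short verifications, with explicit cross-references to \cite{Lawson3} for the non-graded skeleton.
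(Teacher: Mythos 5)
Your parts (2) and (3) match the paper's proof, and part (1) is, as in the paper, essentially a citation of \cite[Proposition 2.33]{Lawson3}. Your worry that (1) secretly needs separativity is unfounded: Hausdorffness of $\GP(S)$ for an \emph{arbitrary} inverse $\land$-semigroup follows from the ultrafilter criterion \cite[Lemma 2.6(2)]{Lawson3} --- if $X \neq Y$ are ultrafilters and $s \in X \setminus Y$, maximality of $Y$ produces $t \in Y$ with $s \land t = 0$, so $\mathcal{Y}_s$ and $\mathcal{Y}_t$ are disjoint neighbourhoods --- and there is no reason to regroup (1) with (4).

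The genuine gap is in part (4). The paper settles it in two lines: graded-Boolean implies pre-Boolean (Proposition~\ref{pre-bool}, using that pre-Booleanness depends only on $E(S)$, which is honestly Boolean), and \cite[Theorem 2.34]{Lawson3} then gives ampleness for any pre-Boolean $\land$-semigroup. You instead propose to prove compactness of each $\mathcal{Y}_s$ directly --- a legitimate alternative route, essentially Lemma~\ref{inv-slice2}(3)--(5) of the paper --- but the mechanism you describe cannot work. There is no ``finiteness consequence of the Boolean axioms'' forbidding an infinite pairwise-orthogonal family of homogeneous subelements of a fixed $s$: in $\mathcal{I}^{\gr}(X)$ with $X$ infinite (graded-Boolean by Proposition~\ref{gr-sym-inv-semgp}), the identity dominates infinitely many pairwise-orthogonal idempotents, and yet $\mathcal{Y}_1$ is compact (its idempotent ultrafilters form a copy of the Stone space of $2^X$, and the singleton idempotents fail to cover it because free ultrafilters contain no singleton). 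So exhibiting such a family yields no contradiction, and your compactness argument has no engine. The step that actually does the work is the one your sketch omits: first reduce an open cover of $\mathcal{Y}_s$ to a cover of $\mathcal{Y}_{s^{-1}s}$ by idempotent basic sets (Lemma~\ref{inv-slice2}(3)), then, for a cover $\mathcal{Y}_u = \bigcup_i \mathcal{Y}_{v_i}$ with $v_i \leq u$ in $E(S)$, use Definition~\ref{bool-def}(3) to form complements $v_i'$ of the $v_i$ in the Boolean algebra $\{u\}^{\downarrow}$ and show $\mathcal{Y}_u \setminus \mathcal{Y}_{v_i} = \mathcal{Y}_{v_i'}$ (via Lemma~\ref{ultrafilt-lem}); if no finite subcover existed, the $v_i'$ would have nonzero finite meets, hence lie in a common ultrafilter belonging to $\mathcal{Y}_u$ but to no $\mathcal{Y}_{v_i}$, a contradiction. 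Without this complement/finite-intersection-property argument (or the paper's appeal to pre-Booleanness), part (4) is not proved.
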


\begin{proof}
(1) According to \cite[Proposition 2.33]{Lawson3}, for each inverse $\land$-semigroup $S$, $\GP(S)$ is a Hausdorff \'etale groupoid in the topology described in the statement. 

\smallskip

(2) We first note that $\psi$ is well-defined, by Lemma~\ref{filter-lemma}(3). Now, let $(X,Y) \in \GP(S)^{(2)}$, and let $x \in X$ and $y \in Y$ be arbitrary. Then $xy \in X \cdot Y = (XY)^{\uparrow}$, and $xy \neq 0$, since $X \cdot Y \in \GP(S)$ is an ultrafilter. Thus, 
\[\psi(X\cdot Y) = \phi(xy) = \phi(x)\phi(y) = \psi(X)\psi(Y).\]
Finally, $\psi^{-1}(\alpha) = \bigcup_{x \in \phi^{-1}(\alpha)}\mathcal{Y}_x$ is open for all $\alpha \in \Gamma$, and so $\psi$ is continuous. Therefore $\psi: \GP(S) \to \Gamma$ is a grading.

\smallskip

(3) Since $\phi(x) = \varepsilon$ for all $x \in X$ such that $X \in \GP(S)_{\varepsilon}$, we have $X \subseteq S_{\varepsilon}$, from which it follows that $X$ is an ultrafilter on $S_{\varepsilon}$, and so $\GP(S)_{\varepsilon} \subseteq \GP(S_{\varepsilon})$. For the opposite inclusion, suppose that $X \in \GP(S_{\varepsilon})$. Since, by Lemma~\ref{filter-lemma}(3), any filter on $S$ containing $X$ must consist of elements of $S_{\varepsilon}$, we conclude that $X$ is an ultrafilter on $S$. Hence $\GP(S)_{\varepsilon} \supseteq \GP(S_{\varepsilon})$, and so $\GP(S)_{\varepsilon} = \GP(S_{\varepsilon})$.

\smallskip

(4)  According to~\cite[Theorem 2.34]{Lawson3}, $\GP(S)$ a Hausdorff ample groupoid for any pre-Boolean $\land$-semigroup $S$ (with respect to the topology described in (1)). The desired conclusion now follows from Proposition~\ref{pre-bool}.
\end{proof}

\begin{lemma}[cf.\ \cite{Lawson2}, Lemma 2.21(5,7,9-12)] \label{inv-slice2}
Let $S$ be a $\, \Gamma$-graded-Boolean inverse $\land$-semigroup, and view $\, \GP(S)$ as a topological groupoid in the topology generated by basic open sets of the form $\mathcal{Y}_s$, as in Lemma~\ref{inv-slice}.
\begin{enumerate}[\upshape(1)]
\item $\mathcal{Y}_s \subseteq \mathcal{Y}_t$ if and only if $s \leq t$, for all $s,t \in S$.

\smallskip 

\item If $s\lor t$ exists, then $\mathcal{Y}_s \cup \mathcal{Y}_t = \mathcal{Y}_{s\lor t}$, for all $s,t \in S$.

\smallskip

\item Let $s, t_i \in S$ be such that each $t_i \leq s$, with $i \in I$, for some indexing set $I$. Then $\mathcal{Y}_{s} = \bigcup_{i\in I} \mathcal{Y}_{t_i}$ if and only if $\mathcal{Y}_{s^{-1}s} = \bigcup_{i\in I} \mathcal{Y}_{s^{-1}t_i}$.

\smallskip

\item Let $u, v_i \in E(S)$, with $i \in I$, for some indexing set $I$. If $\mathcal{Y}_{u} = \bigcup_{i\in I} \mathcal{Y}_{v_i}$, then $\mathcal{Y}_{u} = \bigcup_{i\in J} \mathcal{Y}_{v_i}$ for some finite $J \subseteq I$. 

\smallskip

\item $\mathcal{Y}_s$ is compact for each $s \in S$.

\smallskip 

\item Each nonempty homogeneous compact slice of $\, \GP(S)$ is of the form $\, \bigcup_{i=1}^n \mathcal{Y}_{s_i}$, for some $\alpha \in \Gamma$ and compatible $\, \{s_1, \dots, s_n\} \subseteq S_{\alpha} \setminus \{0\}$.
\end{enumerate}
\end{lemma}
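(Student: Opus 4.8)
The plan is to prove the six items in order, each leaning on the previous ones together with Lemmas~\ref{filter-lemma}, \ref{comp-lem}, \ref{ultrafilt-lem}, \ref{inv-slice}, \ref{gr-gp-obj} and Proposition~\ref{pre-bool}. The organising observation is that in items (1)--(5) everything happens inside a single homogeneous component of $S$: in (3) the hypotheses $t_i\leq s$ and $s^{-1}t_i\leq s^{-1}s$ force $\deg(t_i)=\deg(s)$ and $\deg(s^{-1}t_i)=\varepsilon$ by Lemma~\ref{filter-lemma}(2), while in (4) all elements lie in $E(S)\subseteq S_\varepsilon$. Hence every join we need to form is the join of a finite compatible \emph{homogeneous} family, which exists by Definition~\ref{bool-def}(1), so Lawson's ungraded arguments transfer; the $\Gamma$-grading plays a genuine role only in item (6).

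For (1): if $s\not\leq t$, invoke Lemma~\ref{comp-lem}(2) to obtain $r\in S\setminus\{0\}$ with $r\leq s$ and $r\wedge t=0$, extend the principal filter $\{r\}^{\uparrow}$ to an ultrafilter $Y$ by Zorn's lemma, and note that $Y\in\mathcal{Y}_s\setminus\mathcal{Y}_t$ (if $t\in Y$ then, $Y$ being downward directed, $r\wedge t\in Y$, but $r\wedge t=0\notin Y$); the reverse implication is immediate from upward closure of ultrafilters. Item (2) is immediate from $s,t\leq s\vee t$ and Lemma~\ref{ultrafilt-lem}. For (3), note that $\mathcal{Y}_s$ is a slice of the \'etale groupoid $\GP(S)$ (Lemma~\ref{gr-gp-obj}(1) and Lemma~\ref{inv-slice}(2)), so $\domr$ restricts to a homeomorphism of $\mathcal{Y}_s$ onto the open set $\domr(\mathcal{Y}_s)=\mathcal{Y}_{s^{-1}s}$; using $t_i\leq s$ (hence $ss^{-1}t_i=t_i$) one checks that this homeomorphism carries $\mathcal{Y}_{t_i}$ onto $\mathcal{Y}_{s^{-1}t_i}$, so the two displayed unions correspond and the equivalence follows.

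For (4): if no finite subfamily of $\{\mathcal{Y}_{v_i}\}$ covers $\mathcal{Y}_u$, then by (iterating) item (2) no finite join $v_{i_1}\vee\cdots\vee v_{i_k}$ equals $u$, so, working in the Boolean algebra $\{u\}^{\downarrow}\subseteq E(S)$ (Definition~\ref{bool-def}(3)), the family $\{u\}\cup\{\lnot v_i\}$ has the finite-meet property and extends to an ultrafilter of $\{u\}^{\downarrow}$; this corresponds to an ultrafilter $X$ of $S$ with $u\in X$ but $v_i\notin X$ for all $i$, contradicting $\mathcal{Y}_u=\bigcup_i\mathcal{Y}_{v_i}$. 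Item (5) then follows: refine an open cover of $\mathcal{Y}_s$ to a cover by basic sets $\mathcal{Y}_{r_j}$ with $r_j\leq s$ (Lemma~\ref{inv-slice}(1)), use (3) to pass to the idempotent-level cover $\mathcal{Y}_{s^{-1}s}=\bigcup_j\mathcal{Y}_{s^{-1}r_j}$, extract a finite subcover by (4), and transport it back by (3). Finally, for (6): a nonempty compact slice $K$ is open, so $K=\bigcup\{\mathcal{Y}_s\mid\mathcal{Y}_s\subseteq K\}$, and compactness gives $K=\bigcup_{i=1}^n\mathcal{Y}_{s_i}$ with each $\mathcal{Y}_{s_i}\subseteq K$; discarding those with $\mathcal{Y}_{s_i}=\emptyset$, we may assume $s_i\neq0$, so each $\mathcal{Y}_{s_i}$ is nonempty. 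If $K\subseteq\GP(S)_\alpha$, then choosing any $Y\in\mathcal{Y}_{s_i}$ gives $\deg(s_i)=\psi(Y)=\alpha$, i.e.\ $s_i\in S_\alpha\setminus\{0\}$; and since $\domr$ and $\ran$ are injective on $K=\bigcup_{i=1}^n\mathcal{Y}_{s_i}$ and $S$ is separative (Proposition~\ref{pre-bool}), Lemma~\ref{inv-slice}(5) forces $\{s_1,\dots,s_n\}$ to be compatible, as required.

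The main obstacle is the chain (3)$\Rightarrow$(4)$\Rightarrow$(5) that yields compactness of $\mathcal{Y}_s$: item (3) requires pinning down $\domr(\mathcal{Y}_s)=\mathcal{Y}_{s^{-1}s}$ and tracking how the subslices $\mathcal{Y}_{t_i}$ transform under the \'etale local homeomorphism, and item (4) is exactly where the hypothesis that the principal order ideals of $E(S)$ are Boolean algebras (Definition~\ref{bool-def}(3)) is indispensable. Once this is in place --- together with ampleness of $\GP(S)$ from Lemma~\ref{gr-gp-obj}(4) --- item (6), which is the only place the grading enters, reduces to the short degree computation above.
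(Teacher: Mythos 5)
Your proposal is correct and follows the paper's proof in all essentials: the same chain (1)$\Rightarrow$(2)$\Rightarrow$(3)$\Rightarrow$(4)$\Rightarrow$(5)$\Rightarrow$(6), the same reliance on Lemma~\ref{comp-lem}(2)/separativity for (1), on Lemma~\ref{ultrafilt-lem} for (2), on the Boolean-algebra structure of $\{u\}^{\downarrow}$ plus Zorn for (4), and on Lemma~\ref{inv-slice}(5) with Proposition~\ref{pre-bool} for (6). The only presentational difference is in (3), where you package the translation between $\mathcal{Y}_s$ and $\mathcal{Y}_{s^{-1}s}$ as the $\domr$-bijection on the slice $\mathcal{Y}_s$ rather than as direct ultrafilter manipulation; verifying that this bijection carries $\mathcal{Y}_{t_i}$ onto $\mathcal{Y}_{s^{-1}t_i}$ amounts to exactly the computations with $(sX)^{\uparrow}$ and $(X^{-1}X)^{\uparrow}$ from Lemma~\ref{idempt-filt-lem}(2),(4) that the paper carries out explicitly.
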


\begin{proof}
The proofs of most of these statements are nearly identical to their (non-graded) monoid analogues in~\cite[Lemma 2.21]{Lawson2}, but we include them for the convenience of the reader.

\smallskip

(1) Since $S$ is separative (see Section~\ref{basics-sect} for the definition), by Proposition~\ref{pre-bool}, this follows from~ \cite[Lemma 2.11(1)]{Lawson3}, which says that  $\mathcal{Y}_s \subseteq \mathcal{Y}_t$ if and only if $s \to t$, for all $s,t \in S$.

\smallskip

(2) Let $s,t \in S$, and suppose that $s \lor t$ exists. Then $\mathcal{Y}_s \cup \mathcal{Y}_t \subseteq \mathcal{Y}_{s\lor t}$, by (1). Also, by Lemma~\ref{ultrafilt-lem}, each $X \in \mathcal{Y}_{s\lor t}$ contains either $s$ or $t$, and so $\mathcal{Y}_{s\lor t} \subseteq \mathcal{Y}_s \cup \mathcal{Y}_t$.

\smallskip

(3) Suppose that $\mathcal{Y}_{s^{-1}s} = \bigcup_{i\in I} \mathcal{Y}_{s^{-1}t_i}$. Since $t_i \leq s$ for any $i \in I$, if $X \in \mathcal{Y}_{t_i}$, then $s \in X$, and so $X \in \mathcal{Y}_{s}$, showing that $\bigcup_{i\in I} \mathcal{Y}_{t_i} \subseteq \mathcal{Y}_{s}$. For the opposite inclusion, suppose that $X \in \mathcal{Y}_{s}$. Then $s \in X$, and so $s^{-1}s \in (X^{-1}X)^{\uparrow}$, which is an ultrafilter, by Lemma~\ref{idempt-filt-lem}(5). Thus $(X^{-1}X)^{\uparrow} \in \mathcal{Y}_{s^{-1}s}$, and so, by hypothesis, $(X^{-1}X)^{\uparrow} \in \mathcal{Y}_{s^{-1}t_i}$ for some $i \in I$. It follows that $s^{-1}t_i \in (X^{-1}X)^{\uparrow}$. Now, $(s(X^{-1}X)^{\uparrow})^{\uparrow} = X$, by Lemma~\ref{idempt-filt-lem}(3), and hence $ss^{-1}t_i \in X$. But $ss^{-1}t_i \leq t_i$, and so $t_i \in X$, showing that $X \in \mathcal{Y}_{t_i}$. Therefore $\mathcal{Y}_{s} = \bigcup_{i\in I} \mathcal{Y}_{t_i}$.

Conversely, suppose that $\mathcal{Y}_{s} = \bigcup_{i\in I} \mathcal{Y}_{t_i}$. Let $X \in \mathcal{Y}_{s^{-1}t_i}$ for some $i \in I$. Then $s^{-1}t_i \in X$. Since $t_i \leq s$, we have $s^{-1}t_i \leq s^{-1}s$, and hence $s^{-1}s \in X$. Therefore $X \in \mathcal{Y}_{s^{-1}s}$, and so $\bigcup_{i\in I} \mathcal{Y}_{s^{-1}t_i} \subseteq \mathcal{Y}_{s^{-1}s}$. For the opposite inclusion, let $X \in \mathcal{Y}_{s^{-1}s}$. Then $s^{-1}s \in X$. It follows, by Lemma~\ref{idempt-filt-lem}(4), that $(sX)^{\uparrow}$ is an ultrafilter, containing $s$, and so $(sX)^{\uparrow}\in \mathcal{Y}_{s} = \bigcup_{i\in I} \mathcal{Y}_{t_i}$. Hence $(sX)^{\uparrow}\in \mathcal{Y}_{t_i}$ for some $i \in I$, and therefore $sp \leq t_i$ for some $p\in X$. Then $s^{-1}sp \leq s^{-1}t_i$. Since $s^{-1}s, p \in X$, and $X$ is a semigroup, by~\cite[Lemma 2.8(2)]{Lawson3}, we have $s^{-1}sp \in X$, and so $s^{-1}t_i \in X$. Thus $X \in \mathcal{Y}_{s^{-1}t_i}$, and so $\mathcal{Y}_{s^{-1}s} = \bigcup_{i\in I} \mathcal{Y}_{s^{-1}t_i}$.

\smallskip

(4) Suppose that $\mathcal{Y}_{u} = \bigcup_{i\in I} \mathcal{Y}_{v_i}$. Then $v_i \leq u$ for each $i \in I$, by (1). Since each principal order ideal of $E(S)$ is a Boolean algebra, for each $i \in I$ there exists $v_i' \in \{u\}^{\downarrow}$ such that $v_i \land v_i' = 0$ and $v_i \lor v_i' = u$. Note that, by Lemma~\ref{comp-lem}(1), $v_i \lor v_i' = u$ (and also $v_i \land v_i' = 0$), as elements of $S$. We claim that $\mathcal{Y}_{u} \setminus \mathcal{Y}_{v_i} = \mathcal{Y}_{v_i'}$. Any ultrafilter containing ${v_i'}$ must contain $u$, since $v_i' \leq u$, but not $v_i$, since otherwise it would contain $0$. Thus $\mathcal{Y}_{v_i'} \subseteq \mathcal{Y}_{u} \setminus \mathcal{Y}_{v_i}$. For the opposite inclusion, suppose that $X \in \mathcal{Y}_{u} \setminus \mathcal{Y}_{v_i}$. Then $v_i \lor v_i' \in X$ but $v_i \notin X$, and so $v_i' \in X$, by Lemma~\ref{ultrafilt-lem}. Thus $X \in \mathcal{Y}_{v_i'}$, and so $\mathcal{Y}_{u} \setminus \mathcal{Y}_{v_i} = \mathcal{Y}_{v_i'}$.

Now, seeking a contradiction, suppose that for every finite $J \subseteq I$ we have $\mathcal{Y}_{u} \neq \bigcup_{i\in J} \mathcal{Y}_{v_i}$. Let $J \subseteq I$ be finite and nonempty. Then 
\[\emptyset \neq \mathcal{Y}_{u} \setminus \bigg(\bigcup_{i\in J}\mathcal{Y}_{v_i}\bigg) = \bigcap_{i\in J} (\mathcal{Y}_{u} \setminus \mathcal{Y}_{v_i}) = \bigcap_{i\in J} \mathcal{Y}_{v_i'} = \mathcal{Y}_{w},\]
where $w = \bigwedge_{i\in J} v_i'$, by Lemma~\ref{inv-slice}(1), and so $\bigwedge_{i\in J} v_i' \neq 0$. Thus every finite nonempty subset of $\{v_i' \mid i \in I\}$ has a nonzero meet. It follows that $\{v_i' \mid i \in I\}$ is contained in an ultrafilter $Z$ on $S$. As in the previous paragraph, we must have $u \in Z$, but $v_i \notin Z$ for all $i \in I$. Thus $Z \in \mathcal{Y}_{u}$ but $Z \notin \bigcup_{i\in I} \mathcal{Y}_{v_i}$, contrary to hypothesis. Therefore $\mathcal{Y}_{u} = \bigcup_{i\in J} \mathcal{Y}_{v_i}$ for some finite $J \subseteq I$. 

\smallskip

(5) Suppose that $\mathcal{Y}_{s} \subseteq \bigcup_{i\in I} \mathcal{Y}_{t_i}$, for some $s, t_i \in S$ and indexing set $I$. Then 
\[\mathcal{Y}_{s} = \bigcup_{i\in I} (\mathcal{Y}_{s} \cap \mathcal{Y}_{t_i}) = \bigcup_{i\in I} \mathcal{Y}_{s \land t_i},\] 
by Lemma~\ref{inv-slice}(1). Hence, $\mathcal{Y}_{s^{-1}s} = \bigcup_{i\in I} \mathcal{Y}_{s^{-1}(s \land t_i)}$, by (3). Since $s \land t_i \leq s$, it follows that each $s^{-1}(s \land t_i)$ is an idempotent. Thus, by (4), $\mathcal{Y}_{s^{-1}s} = \bigcup_{i\in J} \mathcal{Y}_{s^{-1}(s \land t_i)}$ for some finite $J \subseteq I$. Hence $\mathcal{Y}_{s} = \bigcup_{i\in J} \mathcal{Y}_{s \land t_i}$, by (3). Using Lemma~\ref{inv-slice}(1) once more, we have $\mathcal{Y}_{s} = \bigcup_{i\in J} (\mathcal{Y}_{s} \cap \mathcal{Y}_{t_i})$, and so $\mathcal{Y}_{s} \subseteq \bigcup_{i\in J} \mathcal{Y}_{t_i}$. It follows that $\mathcal{Y}_{s}$ is compact for each $s \in S$.

\smallskip

(6) Let $X$ be a nonempty homogeneous compact slice of $\GP(S)$, where the grading on $\GP(S)$ is as in Lemma~\ref{gr-gp-obj}. Since $X$ is an open and compact subset of $\GP(S)$, we have $X = \bigcup_{i=1}^n \mathcal{Y}_{s_i}$ for some $s_1, \dots, s_n \in S$. Since $X \neq \emptyset$, we may assume that each of the $s_i$ is nonzero. Since $X$ is a slice, $\{s_1, \dots, s_n\}$ is compatible, by Lemma~\ref{inv-slice}(5). (Recall that $S$ is separative, by Proposition~\ref{pre-bool}.) Now, for each $i \in I$ and $Y \in \mathcal{Y}_{s_i}$, we have $\deg(Y)=\deg(s_i)$, by Lemma~\ref{gr-gp-obj}. Since $X$ is homogenous, it follows that $\{s_1, \dots, s_n\} \subseteq S_{\alpha} \setminus \{0\}$ for some $\alpha \in \Gamma$.
\end{proof}

In preparation for Lemma~\ref{gr-gp-morph} we record the following basic fact.

\begin{lemma} \label{joins-rem}
Let $\phi : S \to T$ be a morphism in $\, \GIS$, and let $s,t \in S$ be elements that have a join $s \lor t \in S$. Then $\phi(s \lor t) = \phi(s) \lor \phi(t)$.
\end{lemma}

\begin{proof}
Setting $r: = s \lor t$, we have $r^{-1}r = s^{-1}s \lor t^{-1}t$, by \cite[Lemma 2.4.4(1)]{Lawson1}. Since $\phi$ is a semigroup homomorphism that restricts to a lattice homomorphism $E(S) \to E(T)$,
\[\phi(s \lor t) = \phi(rr^{-1}r) = \phi(r)\phi(r^{-1}r) = \phi(r)\phi(s^{-1}s \lor t^{-1}t) = \phi(r)(\phi(s^{-1}s) \lor \phi(t^{-1}t)).\]
Note that, by Lemma~\ref{comp-lem}(1), $\phi(s^{-1}s) \lor \phi(t^{-1}t)$ is the join of $\phi(s^{-1}s)$ and $\phi(t^{-1}t)$, as elements of $T$. By Remark~\ref{bool-rem2},
\[\phi(r)(\phi(s^{-1}s) \lor \phi(t^{-1}t)) = \phi(rs^{-1}s) \lor \phi(rt^{-1}t).\]
Finally, writing $s = ru$ and $t = rv$ for some $u,v \in E(S)$, we conclude that
\[\phi(s \lor t) = \phi(rur^{-1}ru) \lor \phi(rvr^{-1}rv) = \phi(rr^{-1}ru) \lor \phi(rr^{-1}rv) = \phi(s) \lor \phi(t),\]
as claimed.
\end{proof}

\begin{lemma} \label{gr-gp-morph}
Let $\phi : S \to T$ be a morphism in $\, \GIS$,
\[\mathcal{D} := \big\{X \in \GP(T) \mid \phi^{-1}(X) \in \GP(S)\big\}, \text{ and } \mathcal{D}' := \big\{X \in \GP(T) \mid \phi^{-1}(X) \neq \emptyset\big\}.\] 
Then $\mathcal{D} = \mathcal{D}'$, $\phi^{-1}$ induces a partial functor $\phi': \GP(T) \to \GP(S)$ with domain $\mathcal{D}$, and $\phi'$ is a morphism in $\, \GG$. Moreover, if $\phi : S \to T$ is a proper morphism, then $\phi' = \phi^{-1}$ is a functor.
\end{lemma}

\begin{proof}
Clearly, $\mathcal{D} \subseteq \mathcal{D}'$. To show the opposite inclusion, let $X \in \mathcal{D}'$. Since $0 \notin X$, also $0 \notin \phi^{-1}(X)$ (the hypotheses in Definition~\ref{is-cat-def} imply that $\phi(0) = 0$). Next, let $s \in \phi^{-1}(X)$ and $t \in S$ be such that $s \leq t$, and write $s = tu$ for some $u \in E(S)$. Then $\phi(s) = \phi(t)\phi(u)$, where $\phi(u) \in E(T)$, and so $\phi(s) \leq \phi(t)$. Since $\phi(s) \in X$, and $X$ is a filter, we have $\phi(t) \in X$, and so $t \in \phi^{-1}(X)$. Thus $\phi^{-1}(X) = \phi^{-1}(X)^{\uparrow}$. Now suppose that $s,t \in \phi^{-1}(X)$. Since $S$ is an inverse $\land$-semigroup, $s \land t \in S$. Since $\phi$ preserves meets, $\phi(s \land t) = \phi(s) \land \phi(t)$. Since $X$ is downward directed, we can find $r \in X$ such that $r \leq \phi(s)$ and $r \leq \phi(t)$, and so $r \leq \phi(s) \land \phi(t)$. Since $X = X^{\uparrow}$, we conclude that $\phi(s \land t) = \phi(s) \land \phi(t) \in X$, and hence $s \land t \in \phi^{-1}(X)$. Given that $\phi^{-1}(X)$ is nonempty, it is therefore downward directed, and thus a filter. Finally, suppose that $s,t \in S$ are elements that have a join $s \lor t$ in $S$, and $s \lor t \in \phi^{-1}(X)$. By Lemma~\ref{joins-rem}, $\phi(s) \lor \phi(t) = \phi(s \lor t) \in X$. Since $X$ is an ultrafilter, $\phi(s) \in X$ or  $\phi(t) \in X$, by Lemma~\ref{ultrafilt-lem}. Thus $s \in \phi^{-1}(X)$ or $t \in \phi^{-1}(X)$, and so $\phi^{-1}(X)$ is a prime filter. Applying Lemma~\ref{ultrafilt-lem} once again, we conclude that $\phi^{-1}(X) \in \GP(S)$, and so $X \in \mathcal{D}$. Thus $\mathcal{D} = \mathcal{D}'$.

Next, we show that $\mathcal{D}$ is an open subgroupoid of $\GP(T)$. Suppose that $X \in \mathcal{D}$, take $s \in \phi^{-1}(X)$, and let $Y \in \GP(T)$ be such that $\phi(s) \in Y$. Then $\phi^{-1}(Y) \neq \emptyset$, and so $Y \in \mathcal{D}' = \mathcal{D}$. Hence 
\[X \in \mathcal{Y}_{\phi(s)} = \big\{Y \in \GP(T) \mid \phi(s) \in Y\big\} \subseteq \mathcal{D},\]
and so we conclude that $\mathcal{D}$ is open (see Lemma~\ref{gr-gp-obj}(1)). Now, suppose that $X_1,X_2 \in \mathcal{D}$ are such that $(X_1,X_2) \in \GP(T)^{(2)}$, and let $s_1 \in \phi^{-1}(X_1)$ and $s_2 \in \phi^{-1}(X_2)$. Then $\phi(s_1s_2) \in X_1 \cdot X_2$, and so $\phi^{-1}(X_1 \cdot X_2) \neq \emptyset$. Also $\phi(s_1^{-1}) = \phi(s_1)^{-1} \in X_1^{-1}$, implies that $\phi^{-1}(X_1^{-1}) \neq \emptyset$. Thus $X_1 \cdot X_2, X_1^{-1} \in \mathcal{D}$, from which it follows that $\mathcal{D}$ is a subgroupoid of $\GP(T)$.

Thus $\phi^{-1}$ induces a partial function $\phi': \GP(T) \to \GP(S)$ with domain $\mathcal{D}$--an open subgroupoid of $\GP(T)$. If $\phi : S \to T$ is a proper morphism, then $\phi^{-1}$ takes all ultrafilters in $\GP(T)$ to ultrafilters in $\GP(S)$, and so $\phi' = \phi^{-1}$ is defined on all of $\GP(T)$. Hence the second claim in the statement follows from the first.

Next, let us show that $\phi'$ is graded. We may assume that $\mathcal{D} \neq \emptyset$, since otherwise that holds vacuously. Let $X \in \mathcal{D}$, let $\alpha := \deg(X)$ (see Lemma~\ref{gr-gp-obj}), and let $s \in \phi^{-1}(X)$ (which is necessarily nonzero). Since $\phi$ is graded, and $\phi(s) \in X$, it must be the case that $\deg(s) = \alpha$. Thus $\deg(\phi^{-1}(X)) = \alpha$, by Lemma~\ref{gr-gp-obj}, and so $\phi'$ is graded.

To show that $\phi'$ is continuous, let $\mathcal{Y}_s$ ($s \in S$) be a basic open set in $\GP(S)$ (see Lemma~\ref{gr-gp-obj}) such that $(\phi')^{-1}(\mathcal{Y}_s) \neq \emptyset$, and let $X \in \GP(T)$. Then $X \in (\phi')^{-1}(\mathcal{Y}_s)$ if and only if $\phi^{-1}(X) \in \mathcal{Y}_s$ if and only if $s \in \phi^{-1}(X)$ if and only if $\phi(s) \in X$ if and only if $X \in \mathcal{Y}_{\phi(s)}$. Thus $(\phi')^{-1}(\mathcal{Y}_s) = \mathcal{Y}_{\phi(s)}$, and is hence open. It follows that $\phi'$ is continuous.

To show that $\phi'$ is proper, let us take a compact $Z \subseteq \GP(S)$, and show that $(\phi')^{-1}(Z)\subseteq \mathcal{D}$ is compact. We have $Z \subseteq \bigcup_{s\in S} \mathcal{Y}_s$, and so $Z \subseteq \bigcup_{i=1}^n \mathcal{Y}_{s_i}$ for some (finite list) $s_1, \dots, s_n \in S$, since $Z$ is compact. As shown in the previous paragraph, $(\phi')^{-1}(\mathcal{Y}_{s_i})$ is either empty or $\mathcal{Y}_{\phi(s_i)}$, for each $i$, and so $(\phi')^{-1}(Z) \subseteq \bigcup_{i=1}^n \mathcal{Y}_{\phi(s_i)}$. As a compact subset of the Hausdorff space $\GP(S)$ (see Lemma~\ref{gr-gp-obj}), $Z$ is closed, and so $(\phi')^{-1}(Z)$ is closed as well, since $\phi'$ is continuous. Also, as a finite union of compact subsets of $\GP(T)$, and hence also $\mathcal{D}$ (see Lemma~\ref{inv-slice2}(5)), $\bigcup_{i=1}^n \mathcal{Y}_{\phi(s_i)}$ is compact. Therefore, being a closed subset of a compact set, $(\phi')^{-1}(Z)$ is compact.

To conclude that $\phi'$ is a morphism in $\GG$ it now suffices to check that $\phi^{-1} : \mathcal{D} \to \GP(S)$ is a covering functor. The analogue of this statement for proper homomorphisms of Boolean inverse monoids is shown in \cite[Proposition 2.15]{Lawson2}, but the (somewhat lengthy) proof, word-for-word, applies to our situation as well, modulo adjusting notation.
\end{proof}

\begin{prop} \label{smgp-to-gp-top}
Sending each object $S$ to $\, \GP(S)$, and each morphism $\phi$ to $\phi'$, as in Lemma~\ref{gr-gp-morph}, defines a contravariant functor $\, \GP : \GIS \to \GG$. Here each $\, \GP(S)$ is viewed as a topological groupoid in the topology generated by basic open sets of the form $\mathcal{Y}_s$, as in Lemma~\ref{inv-slice}.
\end{prop}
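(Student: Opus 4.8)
The plan is to assemble the proposition directly from the groundwork already laid, exactly as Proposition~\ref{gp-to-sg-fuct} was assembled from Lemmas~\ref{gr-inv-obj} and~\ref{gr-inv-morph}. First I would verify that the assignment is well-defined on objects. Given $S \in \GIS$, i.e.\ a $\Gamma$-graded-Boolean inverse $\land$-semigroup, Lemma~\ref{gr-gp-obj} shows that $\GP(S)$, equipped with the topology generated by the basic open sets $\mathcal{Y}_s$, is a Hausdorff \'etale groupoid (part (1)) carrying a $\Gamma$-grading via $\psi$ (part (2)), and part (4) (together with Proposition~\ref{pre-bool}, which guarantees $S$ is pre-Boolean) shows it is ample. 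Hence $\GP(S)$ is an object of $\GG$. The only degenerate case is $S = \{0\}$, where $\GP(S) = \emptyset$ and the required properties hold vacuously.

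Next, well-definedness on morphisms is precisely the content of Lemma~\ref{gr-gp-morph}: for a morphism $\phi : S \to T$ in $\GIS$, the induced map $\phi^{-1} : \GP(T) \to \GP(S)$, sending an ultrafilter $X$ on $T$ to the ultrafilter $\phi^{-1}(X)$ on $S$, is a graded, proper, continuous covering functor, hence a morphism in $\GG$.

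It then remains to check functoriality, which is routine. For the identity $\id_S$, the induced map sends $X \mapsto \id_S^{-1}(X) = X$, so it is $\id_{\GP(S)}$. For composable morphisms $\phi : S \to T$ and $\psi : T \to U$ in $\GIS$ and any $X \in \GP(U)$, one has $s \in (\psi \circ \phi)^{-1}(X)$ iff $\psi(\phi(s)) \in X$ iff $\phi(s) \in \psi^{-1}(X)$ iff $s \in \phi^{-1}(\psi^{-1}(X))$, so $(\psi \circ \phi)^{-1} = \phi^{-1} \circ \psi^{-1}$ as maps $\GP(U) \to \GP(S)$. This reversal of order is exactly the contravariance claimed, and completes the argument.

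Since all the substantive work is already encapsulated in Lemmas~\ref{gr-gp-obj} and~\ref{gr-gp-morph}, there is no real obstacle in this proposition; the only points needing care are the degenerate case $S = \{0\}$ and ensuring that the topology on $\GP(S)$ named in the statement is the same one under which those lemmas were established, both of which are immediate.
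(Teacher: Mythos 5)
Your proposal is correct and follows essentially the same route as the paper: the paper's proof likewise observes that identities and composition are respected and then cites Lemmas~\ref{gr-gp-obj}(1,2,4) and~\ref{gr-gp-morph} for the substance. Your explicit verification of contravariance and the remark on the degenerate case are just fuller spellings-out of what the paper leaves as "clear."
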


\begin{proof}
Let $\phi : S_1 \to S_2$ and $\psi : S_2 \to S_3$ be morphisms in $\GIS$. In the notation of Lemma~\ref{gr-gp-morph}, the domain of $\phi' \psi'$ is
\[(\psi')^{-1}\big(\big\{Y \in \GP(S_2) \mid \phi^{-1}(Y) \neq \emptyset\big\} \cap \big\{\psi^{-1}(X) \mid X \in \GP(S_3), \psi^{-1}(X) \neq \emptyset\big\}\big)\] 
\[= \big\{X \in \GP(S_3) \mid \phi^{-1} \psi^{-1}(X) \neq \emptyset\big\} = \big\{X \in \GP(S_3) \mid (\psi \phi)^{-1}(X) \neq \emptyset\big\},\]
which is the domain of $(\psi \phi)'$. It follows that $\GP(\psi \phi) = \GP(\phi) \GP(\psi)$. Since $\GP$ clearly respects identity morphisms, the desired conclusion follows from Lemmas~\ref{gr-gp-obj}(1,2,4) and~\ref{gr-gp-morph}.
\end{proof}

To show that the functors we have constructed, between the categories $\GIS$ and $\GG$, are dually equivalent, it remains to supply the relevant natural isomorphisms. A version of the claim, in the next lemma, that $\mathscr{G} \cong  \GP(\SG^{\gr}(\mathscr{G}))$ as graded groupoids, where $\mathscr{G}$ is not necessarily Hausdorff, can be found in \cite[Theorem 3.3]{steinberg}.

\begin{lemma} \label{gp-nat-iso}
Defining a function   
\begin{align}
\mu_{\mathscr{G}} : \mathscr{G} & \longrightarrow \GP(\SG^{\gr}(\mathscr{G})) \notag \\
x &\longmapsto \mathcal{X}_x  \notag
\end{align}
for each object $\mathscr{G}$ in $\, \GG$, gives a natural isomorphism $\mu :1_{\GG} \rightarrow \GP\SG^{\gr}$, where $\, 1_{\GG}$ denotes the identity functor on $\, \GG$.
\end{lemma}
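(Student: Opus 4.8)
The plan is to prove the statement in two stages: first that each $\mu_{\mathscr{G}}$ is an isomorphism in $\GG$, and then that the family $\{\mu_{\mathscr{G}}\}$ is natural in $\mathscr{G}$. Almost everything will be harvested from Lemma~\ref{gr-inv-obj2}, which already carries out the graded-specific bookkeeping; the argument otherwise parallels the non-graded case in~\cite{Lawson3}.

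For the first stage I would begin by checking that $\mu_{\mathscr{G}}$ is a bijective functor of abstract groupoids. Well-definedness is Lemma~\ref{gr-inv-obj2}(1), injectivity is Lemma~\ref{gr-inv-obj2}(5), and surjectivity is Lemma~\ref{gr-inv-obj2}(2). To see it is a functor in both directions, compute $\mathcal{X}_x^{-1}\cdot\mathcal{X}_x = \mathcal{X}_{x^{-1}}\cdot\mathcal{X}_x = \mathcal{X}_{x^{-1}x} = \mathcal{X}_{\domr(x)}$ and, similarly, $\mathcal{X}_y\cdot\mathcal{X}_y^{-1} = \mathcal{X}_{\ran(y)}$, using parts (3) and (4) of Lemma~\ref{gr-inv-obj2} (applied to the composable pairs $(x^{-1},x)$ and $(y,y^{-1})$); then part (5) shows $(\mathcal{X}_x,\mathcal{X}_y)$ is composable in $\GP(\SG^{\gr}(\mathscr{G}))$ if and only if $\domr(x)=\ran(y)$, i.e. if and only if $(x,y)\in\mathscr{G}^{(2)}$, and in that case $\mathcal{X}_x\cdot\mathcal{X}_y = \mathcal{X}_{xy}$ by part (4). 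In particular $\mathscr{G}^{(0)}$ maps bijectively onto the unit space of $\GP(\SG^{\gr}(\mathscr{G}))$ (every unit being $F^{-1}\cdot F = \mathcal{X}_{\domr(y)}$ for $F=\mathcal{X}_y$), so $\mu_{\mathscr{G}}$ is an isomorphism of groupoids and hence a covering functor. Next I would verify that $\mu_{\mathscr{G}}$ is a graded homeomorphism: if $x\in\mathscr{G}_\alpha$, then every homogeneous slice $X\ni x$ lies in $\mathscr{G}_\alpha$, so $\deg(\mathcal{X}_x)=\alpha$ for the grading on $\GP(\SG^{\gr}(\mathscr{G}))$ of Lemma~\ref{gr-gp-obj}(2); and for a basic open set $\mathcal{Y}_X$ of $\GP(\SG^{\gr}(\mathscr{G}))$ one has $\mu_{\mathscr{G}}^{-1}(\mathcal{Y}_X) = X$, while $\mu_{\mathscr{G}}(X) = \mathcal{Y}_X$ (the latter using Lemma~\ref{gr-inv-obj2}(2)), and since the $X\in\SG^{\gr}(\mathscr{G})$ form a base for $\mathscr{G}$ this makes $\mu_{\mathscr{G}}$ a homeomorphism. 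A bijective, graded, covering functor that is also a homeomorphism is automatically proper and has an inverse of the same type, so $\mu_{\mathscr{G}}$ is an isomorphism in $\GG$.

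For naturality, let $\phi:\mathscr{G}\to\mathscr{H}$ be a morphism in $\GG$. Unwinding the definitions, $\GP\SG^{\gr}(\phi)$ sends an ultrafilter $F$ on $\SG^{\gr}(\mathscr{G})$ to $\{Y\in\SG^{\gr}(\mathscr{H}) : \phi^{-1}(Y)\in F\}$; applying this to $F=\mathcal{X}_x$ and using $x\in\phi^{-1}(Y)\iff\phi(x)\in Y$ (exactly as in the proof of Lemma~\ref{gr-inv-morph}) gives $\GP\SG^{\gr}(\phi)(\mathcal{X}_x) = \mathcal{X}_{\phi(x)} = \mu_{\mathscr{H}}(\phi(x))$, i.e. $\GP\SG^{\gr}(\phi)\circ\mu_{\mathscr{G}} = \mu_{\mathscr{H}}\circ\phi$, which is the required commuting square.

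I do not expect a genuine obstacle here; the only place demanding care is the first stage, where one must track composability in both directions and confirm that units are sent to units — but this is routine once Lemma~\ref{gr-inv-obj2}(3,4,5) is in hand, since the graded content has already been isolated there and in Lemma~\ref{gr-gp-obj}.
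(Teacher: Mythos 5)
Your proposal is correct and follows essentially the same route as the paper's own proof: it reduces the isomorphism claim to Lemma~\ref{gr-inv-obj2} (bijective covering functor), checks gradedness via homogeneity of slices and Lemma~\ref{gr-gp-obj}, establishes the homeomorphism via $\mu_{\mathscr{G}}(X)=\mathcal{Y}_X$ and $\mu_{\mathscr{G}}^{-1}(\mathcal{Y}_X)=X$, and verifies naturality by unwinding $\GP\SG^{\gr}(\phi)(\mathcal{X}_x)=\mathcal{X}_{\phi(x)}$. The only difference is that you spell out the functoriality and composability bookkeeping that the paper leaves implicit, which is fine.
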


\begin{proof}
Lemma~\ref{gr-inv-obj2} implies that each $\mu_{\mathscr{G}}$ is a bijective covering functor. Also, for each $x \in \mathscr{G}$ and each $X \in \SG^{\gr}(\mathscr{G})$ such that $x \in X$, we have $\deg(X) = \deg(x)$, since $X$ is homogeneous. Then, by Lemma~\ref{gr-gp-obj}, $\deg(\mathcal{X}_x) = \deg(x)$, and so $\mu_{\mathscr{G}}$ is graded. Thus, to conclude that $\mu_{\mathscr{G}}$ is an isomorphism in $\GG$ it suffices to prove that it is open and continuous. 

To show that $\mu_{\mathscr{G}}$ is open, let $X \in \SG^{\gr}(\mathscr{G})$ be a homogeneous compact slice of $\mathscr{G}$ (i.e., a basic open set). Then $\mu_{\mathscr{G}}(X) = \{\mathcal{X}_x \mid x \in X\}$. Now, $X \in \mathcal{X}_x$ for an ultrafilter $\mathcal{X}_x$ on $\SG^{\gr}(\mathscr{G})$ (see Lemma~\ref{gr-inv-obj2}), if and only if $x \in X$. So $\mu_{\mathscr{G}}(X)$ is precisely the set of ultrafilters on $\SG^{\gr}(\mathscr{G})$ that contain $X$. That is, $\mu_{\mathscr{G}}(X) = \mathcal{Y}_X$ (see Lemma~\ref{inv-slice}), and is therefore open, by Proposition~\ref{smgp-to-gp-top}. It follows that $\mu_{\mathscr{G}}$ is an open map.

To show that $\mu_{\mathscr{G}}$ is continuous, let $\mathcal{Y}_X$ be a basic open subset of $\GP(\SG^{\gr}(\mathscr{G}))$, for some $X \in \SG^{\gr}(\mathscr{G})$. As we showed above, $\mathcal{Y}_X = \{\mathcal{X}_x \mid x \in X\}$, and so $\mu_{\mathscr{G}}^{-1}(\mathcal{Y}_X) = X$, which is open, by hypothesis. It follows that $\mu_{\mathscr{G}}$ is continuous.

Finally, let $\phi : \mathscr{G} \to \mathscr{H}$ be an arbitrary morphism in $\GG$. Then an element $x \in \mathscr{G}$ is in the domain of $\phi$ (and $\mu_{\mathscr{H}} \circ \phi$) if and only if
\[\mathcal{X}_x \in \big\{\mathcal{X}_y \in \GP(\SG^{\gr}(\mathscr{G})) \mid (\phi^{-1})^{-1}(\mathcal{X}_y) \neq \emptyset\big\},\]
the domain of $(\phi^{-1})' : \GP(\SG^{\gr}(\mathscr{G})) \to \GP(\SG^{\gr}(\mathscr{H}))$ (and $\mu_{\mathscr{G}} \circ (\phi^{-1})'$)--see Lemmas~\ref{gr-inv-morph} and~\ref{gr-gp-morph}. It is now easy to see that the diagram below commutes.
\[\xymatrix{
\mathscr{G} \ar[rr]^-{\phi} \ar[d]_{\mu_{\mathscr{G}}} & & \mathscr{H} \ar[d]^{\mu_{\mathscr{H}}} \\
\GP(\SG^{\gr}(\mathscr{G})) \ar[rr]^-{(\phi^{-1})'} & & \GP(\SG^{\gr}(\mathscr{H}))}\]
Thus $\mu :1_{\GG} \rightarrow \GP\SG^{\gr}$ is a natural isomorphism.
\end{proof}

\begin{lemma} \label{sgp-nat-iso}
Defining a function 
\begin{align}
\nu_{S} : S & \longrightarrow \SG^{\gr}(\GP(S))  \notag \\
s &\longmapsto \mathcal{Y}_s  \notag
\end{align}
for each object $S$ in $\, \GIS$, gives a natural isomorphism $\nu :1_{\GIS} \rightarrow \SG^{\gr}\GP$, where $\, 1_{\GIS}$ denotes the identity functor on $\, \GIS$.
\end{lemma}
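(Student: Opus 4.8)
The plan is to check that each $\nu_S$ is well defined and is an isomorphism in $\GIS$, and then that the naturality squares commute; essentially all of the content has been assembled in Lemmas~\ref{inv-slice}, \ref{gr-gp-obj}, and~\ref{inv-slice2}. First I would verify that $\mathcal{Y}_s$ is a homogeneous compact slice of $\GP(S)$ for each $s \in S$: it is open by the definition of the topology on $\GP(S)$, compact by Lemma~\ref{inv-slice2}(5), a slice by Lemma~\ref{inv-slice}(2), and homogeneous with $\deg(\mathcal{Y}_s) = \deg(s)$ by the description of the grading $\psi$ in Lemma~\ref{gr-gp-obj}(2) (with $\mathcal{Y}_0 = \emptyset$). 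That same computation shows $\nu_S$ is graded. That $\nu_S$ is a homomorphism of inverse semigroups preserving meets is then immediate from the identities $\mathcal{Y}_s\mathcal{Y}_t = \mathcal{Y}_{st}$, $\mathcal{Y}_s^{-1} = \mathcal{Y}_{s^{-1}}$, and $\mathcal{Y}_s \cap \mathcal{Y}_t = \mathcal{Y}_{s\wedge t}$ of Lemma~\ref{inv-slice}(1,3,4).

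Next I would establish that $\nu_S$ is bijective. For injectivity, if $\mathcal{Y}_s = \mathcal{Y}_t$ then $s \leq t$ and $t \leq s$ by Lemma~\ref{inv-slice2}(1), whence $s = t$. For surjectivity, Lemma~\ref{inv-slice2}(6) says every nonempty homogeneous compact slice of $\GP(S)$ has the form $\bigcup_{i=1}^n \mathcal{Y}_{s_i}$ with $\{s_1, \dots, s_n\} \subseteq S_\alpha \setminus \{0\}$ compatible for some $\alpha \in \Gamma$; since $S$ is graded-Boolean the join $s = \bigvee_{i=1}^n s_i$ exists, and each partial join $\bigvee_{i=1}^k s_i$ exists, being the join of a compatible subset of $S_\alpha$, so iterating Lemma~\ref{inv-slice2}(2) gives $\mathcal{Y}_s = \bigcup_{i=1}^n \mathcal{Y}_{s_i}$; the empty slice is $\mathcal{Y}_0$. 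Thus $\nu_S$ is a bijective homomorphism with $\mathcal{Y}_s \subseteq \mathcal{Y}_t \iff s \leq t$, i.e., an order isomorphism. Consequently it restricts to a lattice isomorphism $E(S) \to E(\SG^{\gr}(\GP(S)))$ which preserves complements in principal order ideals (complements being lattice-theoretically determined), and, being a bijection, it carries ultrafilters onto ultrafilters and so is proper. Hence $\nu_S$ is an isomorphism in $\GIS$, with inverse $\mathcal{Y}_s \mapsto s$.

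Finally I would verify naturality. Given a morphism $\phi : S \to T$ in $\GIS$, the functor $\SG^{\gr}\GP$ sends it to $(\phi^{-1})^{-1} : \SG^{\gr}(\GP(S)) \to \SG^{\gr}(\GP(T))$, the inverse-image map of $\phi^{-1} : \GP(T) \to \GP(S)$. For $s \in S$ and $Y \in \GP(T)$ we have $Y \in (\phi^{-1})^{-1}(\mathcal{Y}_s) \iff \phi^{-1}(Y) \in \mathcal{Y}_s \iff s \in \phi^{-1}(Y) \iff \phi(s) \in Y \iff Y \in \mathcal{Y}_{\phi(s)}$, so $(\phi^{-1})^{-1}(\nu_S(s)) = \mathcal{Y}_{\phi(s)} = \nu_T(\phi(s))$, which is exactly the required commutativity.

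I expect the only genuinely substantive point to be the surjectivity of $\nu_S$, which is where graded-Booleanness is used in an essential way -- through the existence of finite homogeneous compatible joins together with Lemma~\ref{inv-slice2}(6). Everything else is a bookkeeping assembly of the preceding lemmas, plus the general observation that an order isomorphism between graded-Boolean inverse $\land$-semigroups is automatically an isomorphism in $\GIS$.
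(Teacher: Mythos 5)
Your proposal is correct and follows essentially the same route as the paper: both establish that $\nu_{S}$ is a graded semigroup homomorphism via Lemma~\ref{inv-slice}, deduce injectivity and surjectivity (and the order-isomorphism property) from Lemma~\ref{inv-slice2}(1,2,6), conclude that properness, meet-preservation, and complement-preservation follow automatically from being an order isomorphism, and finish with the same naturality computation. Your write-up merely supplies a few extra details (the explicit check that each $\mathcal{Y}_s$ is a homogeneous compact slice, and the iteration of the binary-join identity) that the paper leaves implicit.
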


\begin{proof}
Let $S$ be an object in $\GIS$. Then, by Lemma~\ref{inv-slice}(4), $\nu_{S}$ is a semigroup homomorphism. By Remark~\ref{bool-rem2} and Lemma~\ref{inv-slice2}(2,6), $\nu_{S}$ is surjective. Lemma~\ref{inv-slice2}(1) implies that $\mathcal{Y}_s = \mathcal{Y}_t$ if and only if $s = t$, for all $s,t \in S$, from which it follows that $\nu_{S}$ is injective, and is hence a semigroup isomorphism. Therefore, by Lemma~\ref{inv-slice2}(1), $\nu_{S}$ is also an order isomorphism. In particular, $\nu_{S}$ preserves meets (see also Lemma~\ref{inv-slice}(1)), and restricts to a lattice homomorphism $E(S) \to E(\SG^{\gr}(\GP(S)))$ (see also Lemma~\ref{inv-slice}(1) and Lemma~\ref{inv-slice2}(2)), which preserves complements in principal order ideals. By Lemma~\ref{gr-gp-obj}, if $s \in S_{\alpha}$ for some $\alpha \in \Gamma$, then $\mathcal{Y}_s \in \SG^{\gr}(\GP(S))_{\alpha}$, and so $\nu_{S}$ is graded. Therefore $\nu_{S}$ is an isomorphism in $\GIS$.

Finally, for all morphisms $\phi : S \to T$ in $\GIS$, it is clear that the diagram below commutes (see Lemmas~\ref{gr-gp-morph} and~\ref{gr-inv-morph}).
\[\xymatrix{
S \ar[rr]^-{\phi} \ar[d]_{\nu_{S}} & & T \ar[d]^{\nu_{T}} \\
\SG^{\gr}(\GP(S)) \ar[rr]^-{(\phi')^{-1}} & & \SG^{\gr}(\GP(T))}\]
Thus $\nu :1_{\GIS} \rightarrow \SG^{\gr}\GP$ is a natural isomorphism.
\end{proof}

Combining the results above gives the following graded analogue of~\cite[Theorem 1.4]{Lawson3}.

\begin{thm} \label{gr-duality-thm}
The categories $\, \GIS$ and $\, \GG$ are dually equivalent, via the functors defined in Propositions~\ref{gp-to-sg-fuct} and~\ref{smgp-to-gp-top}, and natural transformations defined in Lemmas~\ref{gp-nat-iso} and~\ref{sgp-nat-iso}.
\end{thm}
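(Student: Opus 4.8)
The plan is to observe that Theorem~\ref{gr-duality-thm} is now essentially a bookkeeping statement: every substantive ingredient has been assembled in the preceding propositions and lemmas, and all that remains is to recall the definition of a dual equivalence and check that our data satisfy it. Recall that two categories $\mathcal{C}$ and $\mathcal{D}$ are \emph{dually equivalent} precisely when there exist contravariant functors $F : \mathcal{C} \to \mathcal{D}$ and $G : \mathcal{D} \to \mathcal{C}$ together with natural isomorphisms $1_{\mathcal{C}} \xrightarrow{\sim} G \circ F$ and $1_{\mathcal{D}} \xrightarrow{\sim} F \circ G$; equivalently, $F$ and $G$ are mutually quasi-inverse contravariant equivalences. (One can always upgrade such a pair to an adjoint equivalence satisfying the triangle identities, but this is not needed.)

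The first step is to record the two functors. Proposition~\ref{gp-to-sg-fuct} supplies the contravariant functor $\SG^{\gr} : \GG \to \GIS$, sending an object $\mathscr{G}$ to $\SG^{\gr}(\mathscr{G})$ and a morphism $\phi$ to $\phi^{-1}$; that this genuinely lands in $\GIS$ is the content of Lemmas~\ref{gr-inv-obj} and~\ref{gr-inv-morph}. Dually, Proposition~\ref{smgp-to-gp-top} supplies the contravariant functor $\GP : \GIS \to \GG$, whose well-definedness is Lemmas~\ref{gr-gp-obj} and~\ref{gr-gp-morph}. The second step is to invoke the two natural isomorphisms: Lemma~\ref{gp-nat-iso} gives $\mu : 1_{\GG} \xrightarrow{\sim} \GP \circ \SG^{\gr}$ with components $\mu_{\mathscr{G}} : x \mapsto \mathcal{X}_x$, and Lemma~\ref{sgp-nat-iso} gives $\nu : 1_{\GIS} \xrightarrow{\sim} \SG^{\gr} \circ \GP$ with components $\nu_S : s \mapsto \mathcal{Y}_s$. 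Putting these four facts side by side produces exactly the data in the definition above, which closes the proof.

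As for where the genuine difficulty resides: it has already been spent. The decisive inputs were Lemma~\ref{gr-inv-obj2}, which identifies the ultrafilters of $\SG^{\gr}(\mathscr{G})$ with the points of $\mathscr{G}$ and shows this identification respects composition, inversion, and the grading --- so that $\mu_{\mathscr{G}}$ is a graded bijective covering functor --- and Lemma~\ref{inv-slice2}(6), which shows every nonempty homogeneous compact slice of $\GP(S)$ has the form $\bigcup_{i=1}^{n}\mathcal{Y}_{s_i}$ for a compatible homogeneous family, so that $\nu_S$ is onto and hence an isomorphism in $\GIS$. Both of these ultimately lean on separativity of graded-Boolean inverse $\land$-semigroups (Proposition~\ref{pre-bool}), through Lemma~\ref{inv-slice}(5) and Lemma~\ref{inv-slice2}(1). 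Consequently the write-up of Theorem~\ref{gr-duality-thm} itself is short --- a sentence recalling the definition of dual equivalence, followed by a citation of Propositions~\ref{gp-to-sg-fuct} and~\ref{smgp-to-gp-top} and Lemmas~\ref{gp-nat-iso} and~\ref{sgp-nat-iso} --- and I do not anticipate any obstacle beyond stating it cleanly.
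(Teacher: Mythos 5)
Your proposal is correct and matches the paper exactly: the paper gives no separate proof for Theorem~\ref{gr-duality-thm}, simply noting that it follows by combining Propositions~\ref{gp-to-sg-fuct} and~\ref{smgp-to-gp-top} with the natural isomorphisms of Lemmas~\ref{gp-nat-iso} and~\ref{sgp-nat-iso}, which is precisely your argument. Your additional remarks correctly identify where the real work was done (Lemmas~\ref{gr-inv-obj2} and~\ref{inv-slice2}(6), resting on separativity via Proposition~\ref{pre-bool}).
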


Since our duality takes proper semigroup homomorphisms in $\GIS$ to fully-defined functors in $\GG$, and vice versa (see Lemmas~\ref{gr-gp-morph} and~\ref{gr-inv-morph}), we can recover Lawson's duality \cite[Theorem 1.4]{Lawson3} from the above result, by taking $\Gamma = \{\varepsilon\}$, and identifying $\GIS$ and $\GG$ with their non-graded counterparts.

We conclude this section with a couple of illustrations of Theorem~\ref{gr-duality-thm}.

\begin{example} \label{set-prod-groupoid}
For any set $X$, we can view $X \times X$ as a groupoid, with unit space $X$, and morphism set $X\times X$. Here $\domr((x,y)) := y$, $\ran((x,y)) := x$, and $(x,y)(y,z) := (x,z)$ for all $(x,y),(y,z) \in X \times X$.

According to~\cite[Example 2.25]{Lawson2}, for a finite nonempty set $X$, the discrete (Hausdorff ample) groupoid $X \times X$ corresponds to the symmetric inverse monoid $\mathcal{I}(X)$, in the Lawson-Stone duality (see~\cite{Lawson2} or~\cite[Theorem 1.4]{Lawson3}). More precisely, (compact) slices of $X\times X$ are subsets $\{(x_i,y_i) \mid i \in I\}$ of $X \times X$, such that $x_i\neq x_j$ and $y_i \neq y_j$, for distinct $i,j \in I$. Each slice $\{(x_i,y_i) \mid i \in I\}$ can be viewed as a partial symmetry $\phi: \{x_i\}_{i \in I} \to \{y_i\}_{i\in I}$ of $X$, with $\phi(x_i) = y_i$ for all $i \in I$, and the multiplication in $\SG(X\times X)$ corresponds precisely to that in $\mathcal{I}(X)$. Thus $\SG(X\times X)$ can be identified with $\mathcal{I}(X)$.

Now suppose that $X$ is a finite nonempty set with a grading $\phi: X \to \Gamma$, for some group $\Gamma$ (see Section~\ref{eg-sect} for the terminology). Then it is easy to see that $X\times X$ becomes a discrete $\Gamma$-graded groupoid, via $(x,y)\mapsto \phi(x)^{-1}\phi(y)$. In the duality of Theorem~\ref{gr-duality-thm}, $X\times X$ corresponds to $\mathcal{I}^{\gr}(X)$ (see Proposition~\ref {gr-sym-inv-semgp}). To see this, note that $\SG^{\gr}(X\times X)$ consists of slices $\{(x_i,y_i) \mid i \in I\}$ of $X \times X$, such that $\phi(x_i)^{-1}\phi(y_i) = \phi(x_j)^{-1}\phi(y_j)$ for all $i,j \in I$. It follows that each element $\{(x_i,y_i) \mid i \in I\}$ of $\SG^{\gr}(X\times X)$ corresponds to an element of $\mathcal{I}(X)_{\alpha}$, where $\alpha = \phi(x_i)^{-1}\phi(y_i)$ for all $i \in I$, in the above identification. So $\SG^{\gr}(X\times X) \cong \mathcal{I}^{\gr}(X)$ as $\Gamma$-graded inverse semigroups.
\end{example}

To give a sense for the possibilities afforded by using non-proper semigroup homomorphisms and partial functors in our duality, let us consider a more specialized example.

\begin{example} \label{morph-eg}
Let $X := \{x,y\}$, and view $\mathcal{I}(X) = \mathcal{I}^{\gr}(X)$ as a $\Z_2$-graded semigroup--see Example~\ref{symm-inv-eg}. Also letting $Y := \{x\} \subseteq X$, view $\mathcal{I}(Y) = \mathcal{I}^{\gr}(Y)$ as trivially-graded by $\Z_2$. Recall that given two partial symmetries $\phi, \psi$, we have  $\phi \leq \psi$ if and only if $\dom(\phi) \subseteq \dom(\psi)$, and $\phi(x)=\psi(x)$ for all $x \in \dom(\phi)$. So, clearly, the natural embedding $\iota : \mathcal{I}^{\gr}(Y) \to \mathcal{I}^{\gr}(X)$ is a morphism in the category $\Z_2\bf{GrIS}$. It is, however, not proper. To see this, consider $\{\theta_{xy},\tau\} \subseteq \mathcal{I}^{\gr}(X)_1$, using the notation of Example~\ref{symm-inv-eg}. Since $\theta_{xy} \leq \tau$, this set is downward directed, and also $\{\theta_{xy},\tau\} = \{\theta_{xy},\tau\}^{\uparrow}$. Since any filter containing $\{\theta_{xy},\tau\}$ must be a subset of $\mathcal{I}(X)_1 \setminus \{0\} = \{\tau, \theta_{xy}, \theta_{yx}\}$, by Lemma~\ref{filter-lemma}(3), and $\theta_{xy} \land \theta_{yx} = 0$, the set $\{\theta_{xy},\tau\}$ is an ultrafilter on $\mathcal{I}^{\gr}(X)$. However, $\iota^{-1}(\{\theta_{xy},\tau\}) = \emptyset$, and so $\iota$ is not proper.

As discussed in Example~\ref{set-prod-groupoid}, $\mathcal{I}^{\gr}(X)$ and $\mathcal{I}^{\gr}(Y)$ correspond (up to isomorphism), in the duality of Theorem~\ref{gr-duality-thm}, to the groupoids $X \times X$ and $Y \times Y$, respectively, with appropriate $\Z_2$-gradings. Using Lemma~\ref{gr-inv-morph}, it is not hard to see that $\iota$ corresponds in that duality to a partial functor $X \times X \to Y \times Y$, with domain $\{(x,x)\}$.
\end{example}

\section{Enveloping Rings} \label{rings-sect}

This final section is devoted to relating certain enveloping rings constructed from the semigroups $\SG(\mathscr{G})$ and $\SG^{\gr}(\mathscr{G})$, discussed in Section~\ref{duality-sect}. We begin by recalling graded semigroup rings.

Let $R$ be a (unital) ring, and $\Gamma$ a group. We say that $R$ is $\Gamma$-\emph{graded} if $R = \bigoplus_{\alpha \in \Gamma} R_\alpha$, where the $R_\alpha$ are additive subgroups of $R$, and $R_\alpha R_\beta \subseteq R_{\alpha \beta}$ for all $\alpha, \beta \in \Gamma$ (with $R_\alpha R_\beta$ denoting the set of all sums of elements of the form $rp$, for $r \in R_\alpha$ and $p \in R_\beta$).

Next, given a semigroup $S$, we denote by $R[S]$ the \emph{contracted} semigroup ring of $S$ (where the zero of $S$ is identified with the zero of $RS$). We denote an arbitrary element of $R[S]$ by $\sum_{s\in S} r^{(s)}s$, where $r^{(s)} \in R$, and all but finitely many of the $r^{(s)}$ are zero. If $S$ is $\Gamma$-graded, then it is easy to check that defining 
\[R[S]_{\alpha} := R[S_{\alpha}] = \Big\{\sum_{s\in S} r^{(s)}s \mid s \in S_\alpha \text{ whenever } r^{(s)}\neq 0\Big\},\]
for each $\alpha \in \Gamma$, turns $R[S]$ into a $\Gamma$-graded ring. See~\cite[Section 7]{RZ} for additional details.

Supposing that $S$ is a $\Gamma$-graded-Boolean inverse semigroup, we define the \emph{enveloping $R$-ring of $S$} (or \emph{$R$-algebra}, in case $R$ is commutative) as $R\langle S \rangle := R[S]/I$, where
\begin{equation*}
I := \langle u+v-u\vee v \mid u, v \in S_{\alpha} \text{ orthogonal, } \alpha \in \Gamma \rangle.
\end{equation*} 
Now, $I$ is a \emph{graded} ideal of $R[S]$, that is, $I = \bigoplus_{\alpha \in \Gamma}(I \cap R[S]_{\alpha})$. Hence the $\Gamma$-grading on $R[S]$ induces a \emph{quotient} $\Gamma$-grading on $R\langle S \rangle$, via 
\[R\langle S \rangle_{\alpha} := R[S]_{\alpha} + I.\]
See~\cite[Section 1.1.5]{hazi} for more on quotient gradings. The non-graded version of $R\langle S \rangle$ (i.e., where the grading on $S$, and hence also on $R\langle S \rangle$, is trivial) is discussed in~\cite[\S 6.3]{wehrung}.  

Given a $\Gamma$-graded Hausdorff ample groupoid $\mathscr{G}$, the graded inverse semigroup $\SG^{\gr}(\mathscr{G})$ is generally a proper subsemigroup of $\SG(\mathscr{G})$, whenever the grading on $\mathscr{G}$ is nontrivial (i.e., $\mathscr{G} \neq \mathscr{G}_{\varepsilon}$). Our next result shows, however, that those two semigroups produce isomorphic enveloping rings.

\begin{thm}
Let $R$ be a ring and $\mathscr{G}$ a $\, \Gamma$-graded Hausdorff ample groupoid. Then
\[R\langle \SG(\mathscr{G}) \rangle \cong R\langle \SG^{\gr}(\mathscr{G}) \rangle.\]
\end{thm}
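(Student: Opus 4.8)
The plan is to realise the isomorphism through a pair of mutually inverse ring homomorphisms, built from the decomposition of an arbitrary compact slice into its homogeneous pieces. Throughout, $\SG(\mathscr{G})$ is regarded as trivially $\Gamma$-graded, so the defining relations of $R\langle\SG(\mathscr{G})\rangle$ are $u+v-u\vee v$ for all orthogonal $u,v\in\SG(\mathscr{G})$, and I use repeatedly that in both $\SG(\mathscr{G})$ and $\SG^{\gr}(\mathscr{G})$ meets are intersections and joins of compatible families are unions (Lemma~\ref{gr-inv-obj} and \cite[Lemma 2.32(4)]{Lawson3}). The first ingredient is a decomposition lemma: every $X\in\SG(\mathscr{G})$ is a finite disjoint union $X=X_{\alpha_1}\sqcup\cdots\sqcup X_{\alpha_n}$ of nonempty homogeneous compact slices $X_{\alpha_i}:=X\cap\mathscr{G}_{\alpha_i}\in\SG^{\gr}(\mathscr{G})$. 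Indeed, the sets $X\cap\mathscr{G}_\alpha$ partition $X$ into relatively clopen pieces (each $\mathscr{G}_\alpha$ being clopen), so compactness of $X$ leaves only finitely many nonempty, and each nonempty piece is open, compact (being closed in $X$) and a homogeneous slice; since $X$ is a slice, $\domr|_X$ and $\ran|_X$ are injective, so these pieces have pairwise disjoint domains and pairwise disjoint ranges, hence are pairwise orthogonal, and $X=\bigvee_i X_{\alpha_i}$ in $\SG(\mathscr{G})$. I would also record the auxiliary identity that, for pairwise orthogonal $u_1,\dots,u_m$ in a common homogeneous component $S_\alpha$ of a graded-Boolean inverse $\land$-semigroup $S$, one has $\sum_i\overline{u_i}=\overline{\bigvee_i u_i}$ in $R\langle S\rangle$; this follows by induction from the defining relation, using that $u_1\vee\cdots\vee u_k$ is again orthogonal to $u_{k+1}$ because $(u_1\vee\cdots\vee u_k)^{-1}=u_1^{-1}\vee\cdots\vee u_k^{-1}$ and multiplication distributes over joins of homogeneous compatible families (Definition~\ref{bool-def}(2)).

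I would then define the two homomorphisms. The inclusion $\SG^{\gr}(\mathscr{G})\hookrightarrow\SG(\mathscr{G})$ extends $R$-linearly to $R[\SG^{\gr}(\mathscr{G})]\to R[\SG(\mathscr{G})]\to R\langle\SG(\mathscr{G})\rangle$; since orthogonality and joins of homogeneous compatible families are computed identically in the subsemigroup and in the whole semigroup, every defining relation of $R\langle\SG^{\gr}(\mathscr{G})\rangle$ maps to a defining relation of $R\langle\SG(\mathscr{G})\rangle$, so this descends to a ring homomorphism $\iota:R\langle\SG^{\gr}(\mathscr{G})\rangle\to R\langle\SG(\mathscr{G})\rangle$. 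In the other direction, define $\pi:R[\SG(\mathscr{G})]\to R\langle\SG^{\gr}(\mathscr{G})\rangle$ on basis elements by $X\mapsto\sum_i\overline{X_{\alpha_i}}$, via the decomposition lemma. For multiplicativity: for $X,Y\in\SG(\mathscr{G})$ we have $XY=\bigsqcup_\gamma\big(\bigsqcup_{\alpha\beta=\gamma}X_\alpha Y_\beta\big)$ with $X_\alpha Y_\beta\subseteq\mathscr{G}_{\alpha\beta}$; since $XY$ is a slice, the pieces $X_\alpha Y_\beta$ are pairwise disjoint and hence (being subsets of the slice $XY$) pairwise orthogonal, so $\overline{(XY)_\gamma}=\sum_{\alpha\beta=\gamma}\overline{X_\alpha Y_\beta}$ by the auxiliary identity, whence $\pi(XY)=\sum_{\alpha,\beta}\overline{X_\alpha Y_\beta}=\pi(X)\pi(Y)$. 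Moreover $\pi$ kills the defining relations of $R\langle\SG(\mathscr{G})\rangle$: if $u,v\in\SG(\mathscr{G})$ are orthogonal, then for every $\alpha$ the pieces $u_\alpha,v_\alpha$ are orthogonal in $\SG^{\gr}(\mathscr{G})_\alpha$ with $(u\vee v)_\alpha=u_\alpha\vee v_\alpha$, so $\pi(u)+\pi(v)-\pi(u\vee v)=\sum_\alpha(\overline{u_\alpha}+\overline{v_\alpha}-\overline{u_\alpha\vee v_\alpha})=0$. Hence $\pi$ descends to a ring homomorphism $R\langle\SG(\mathscr{G})\rangle\to R\langle\SG^{\gr}(\mathscr{G})\rangle$.

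To finish, I would check $\pi\circ\iota=\id$ and $\iota\circ\pi=\id$ on generating slices: a homogeneous slice is its own one-term decomposition, so $\pi(\iota(\overline{X}))=\overline{X}$ for $X\in\SG^{\gr}(\mathscr{G})$; and $\iota(\pi(\overline{X}))=\sum_i\overline{X_{\alpha_i}}=\overline{\bigvee_i X_{\alpha_i}}=\overline{X}$ in $R\langle\SG(\mathscr{G})\rangle$ for $X\in\SG(\mathscr{G})$, using the decomposition lemma and the auxiliary identity. I expect the main obstacle to be the verification that $\pi$ is a well-defined ring homomorphism — that is, the bookkeeping showing the homogeneous decomposition is compatible with semigroup multiplication, and that the orthogonal-join relation propagates correctly through these decompositions; once the decomposition lemma and the auxiliary identity are established, the remainder is formal.
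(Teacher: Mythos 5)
Your proposal is correct and follows essentially the same route as the paper: the same homogeneous decomposition $X_\alpha = X \cap \mathscr{G}_\alpha$ of a compact slice into finitely many pairwise orthogonal homogeneous compact slices, the same pair of maps (inclusion one way, decomposition the other), and the same verification that the orthogonal-join relations are respected componentwise. The only notable difference is that you verify multiplicativity of $\pi$ directly via your auxiliary identity, whereas the paper sidesteps that bookkeeping by noting it suffices for $\psi$ to be a set-theoretic two-sided inverse of the ring homomorphism $\phi$.
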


\begin{proof}
We can define $\phi: R[\SG^{\gr}(\mathscr{G})] \rightarrow R[\SG(\mathscr{G})]$ by letting $\phi (X) = X$ for each $X \in \SG^{\gr}(\mathscr{G})$, and extending $R$-linearly to all of $R[\SG^{\gr}(\mathscr{G})]$. Since $\SG^{\gr}(\mathscr{G})$ is a subsemigroup of $\SG(\mathscr{G})$, the map $\phi$ is clearly an injective ring homomorphism. Since the join operation in both $\SG(\mathscr{G})$ and $\SG^{\gr}(\mathscr{G})$ is simply set union (see Lemma~\ref{gr-inv-obj}), $\phi$ reduces to a ring homomorphism $\phi: R\langle \SG^{\gr}(\mathscr{G}) \rangle \rightarrow R\langle \SG(\mathscr{G}) \rangle$.

Next, for each $X \in \SG(\mathscr{G})$ and $\alpha \in \Gamma$, let $X_{\alpha} :=X\cap \mathscr{G}_{\alpha}$. Then $X=\bigcup_{\alpha \in \Gamma} X_{\alpha}$, where the union is disjoint, and $X_{\alpha} \in \SG^{\gr}(\mathscr{G})$ for each $\alpha \in \Gamma$. Since $X$ is compact, only finitely many of the $X_{\alpha}$ are nonempty. Define $\psi: R[\SG(\mathscr{G})] \rightarrow R[\SG^{\gr}(\mathscr{G})]$ by letting $\psi(X) = \sum _{\alpha \in \Gamma} X_{\alpha}$ for each $X \in \SG(\mathscr{G})$, and extending $R$-linearly to all of $R[\SG(\mathscr{G})]$. We shall next show that $\psi$ reduces to a function $\psi: R\langle \SG(\mathscr{G}) \rangle \rightarrow R\langle \SG^{\gr}(\mathscr{G}) \rangle$.

Let $X,Y \in \SG(\mathscr{G})$ be orthogonal slices. If $\alpha, \beta \in \Gamma$ are distinct, then $X$ being a slice implies that $X_{\alpha}$ and $X_{\beta}$ have disjoint domain sets, and hence $X_{\alpha}^{-1}X_{\beta}=\emptyset$. Thus in $R\langle \SG(\mathscr{G}) \rangle$ we have 
\[\overline{X} = \bigcup_{\alpha \in \Gamma} \overline{X_{\alpha}} =\sum_{\alpha \in \Gamma} \overline{X_{\alpha}},\] where $\overline{X}$ denotes the image of $X$ in $R\langle \SG(\mathscr{G}) \rangle$, and likewise for $\overline{X_{\alpha}}$. The same considerations apply to $Y$ and $X\cup Y$. Therefore in $R\langle \SG(\mathscr{G}) \rangle$ we have
\[\overline{X}+ \overline{Y}-\overline{X\cup Y}=\sum_{\alpha\in \Gamma} \big(\overline{X_{\alpha}}+ \overline{Y_{\alpha}}-\overline{X_{\alpha}\cup Y_{\alpha}}\big).\] 
It follows that $\psi$ reduces to a well-defined function $\psi: R\langle \SG(\mathscr{G}) \rangle \rightarrow R\langle \SG^{\gr}(\mathscr{G}) \rangle$.

It is easy to see that both $\phi\psi: R\langle \SG(\mathscr{G}) \rangle \rightarrow R\langle \SG(\mathscr{G}) \rangle$ and $\psi\phi: R\langle \SG^{\gr}(\mathscr{G}) \rangle \rightarrow R\langle \SG^{\gr}(\mathscr{G}) \rangle$ are the identity function, making $\phi$ a bijection. Therefore $\phi$ is an isomorphism, as desired.
\end{proof}

\section*{Acknowledgements}

R.\ Hazrat was supported by Australian Research Council grant DP230103184. 

We would like to thank Benjamin Steinberg for a helpful discussion of this material, and Ganna Kudryavtseva for pointing us to relevant literature. 

We are grateful to the referee for numerous helpful suggestions, which have led to notable improvements in the paper. The most significant one was to allow in our inverse semigroup categories morphisms that are not necessarily proper, and, correspondingly, allow in our groupoid categories partial functors as morphisms.


\begin{thebibliography}{9999}
\bibitem{AAS} G.\ Abrams, P.\ Ara, and M.\ Siles Molina, \emph{Leavitt path algebras,}
Lecture Notes in Mathematics \textbf{2191}, Springer-Verlag, London, 2017.

\bibitem{arasims} P.\ Ara, R.\ Hazrat, H.\ Li, and A.\ Sims, \textit{Graded Steinberg algebras and their representations,} Algebra Number Theory \textbf{12} (2018) 131--172. 

\bibitem{AH} C.\ J.\ Ash and T.\ E.\ Hall, \textit{Inverse semigroups on graphs}, Semigroup Forum \textbf{11} (1975) 140--145.

\bibitem{Bice} T.\ Bice, \textit{Noncommutative Pierce duality between Steinberg rings and ample ringoid bundles,} J.\ Pure Appl.\ Algebra \textbf{227} (2023) 107407.

\bibitem{BS} T.\ Bice and C.\ Starling, \textit{General non-commutative locally compact locally Hausdorff Stone duality,} Adv.\ Math \textbf{341} (2019) 40--91.

\bibitem{BEM} A.\ Buss, R.\ Exel, and R.\ Meyer, \textit{Inverse semigroup actions as groupoid actions,} Semigroup Forum \textbf{858} (2012) 227--243.

\bibitem{ClarkH} L.\ O.\ Clark and R.\ Hazrat, \textit{\'Etale groupoids and Steinberg algebras: A concise introduction,} In: Leavitt path algebras and classical K-theory, pp. 73--101, Indian Statistical Institute Series, Springer, Singapore, 2020. 

\bibitem{chr19} L.\ O.\ Clark, R.\ Hazrat, and S.\ W.\ Rigby, \textit{Strongly graded groupoids and strongly graded Steinberg algebras,} J.\ Algebra \textbf{530} (2019) 34--68.

\bibitem{CG} R.\ Cockett and R.\ Garner, \textit{Generalising the \'etale groupoid–complete pseudogroup correspondence,} Adv.\ Math \textbf{392} (2021) 108030.

\bibitem{exel} R.\ Exel, \textit{Reconstructing a totally disconnected groupoid from its ample semigroup,} Proc.\ Amer.\ Math.\ Soc.\ \textbf{138} (2010) 2991--3001.

\bibitem{GH} V.\ Gould and C.\ Hollings, \textit{Restriction semigroups and inductive constellations,} Comm.\ Algebra \textbf{38} (2010) 261--287.

\bibitem{hazi} R.\ Hazrat, Graded rings and graded Grothendieck groups, London Mathematical Society Lecture Note Series \textbf{435}, Cambridge University Press, Cambridge, 2016.

\bibitem{GGG} R.\ Hazrat, \textit{The graded Grothendieck group and the classification of Leavitt path algebras,} Math.\ Ann \textbf{355} (2013) 273--325.

\bibitem{RZ} R.\ Hazrat and Z.\ Mesyan, \textit{Graded semigroups,} Israel J.\ Math. \textbf{253} (2023) 249--319.

\bibitem{Howie} J.\ M.\ Howie, \textit{Fundamentals of semigroup theory,} Oxford University Press, Oxford-New York, 1995.

\bibitem{kellendonk1} J.\ Kellendonk, \textit{The local structure of tilings and their integer group of coinvariants,} Commun.\ Math.\ Phys.\ \textbf{187} (1997) 115--157.

\bibitem{kellendonk2} J.\ Kellendonk, \textit{Topological equivalence of tilings,} J.\ Math.\ Phys.\ \textbf{38} (1997) 1823--1842.

\bibitem{kudryavtseva} G.\ Kudryavtseva, \textit{Relating ample and biample topological categories with Boolean restriction and range semigroups,} Adv.\ Math \textbf{474} (2025) 110313.

\bibitem{KL1} G.\ Kudryavtseva and M.\ V.\ Lawson, \textit{Boolean sets, skew Boolean algebras and a non-commutative Stone duality,} Algebra Univers.\ \textbf{75} (2016) 1--19.

\bibitem{KL2} G.\ Kudryavtseva and M.\ V.\ Lawson, \textit{A perspective on non-commutative frame theory,} Adv.\ Math \textbf{311} (2017) 378--468.

\bibitem{Lawson1} M.\ V.\ Lawson, \textit{Inverse semigroups: The theory of partial symmetries,} second edition, World Scientific Publishing, River Edge, NJ, 2025.

\bibitem{Lawson2} M.\ V.\ Lawson, \textit{A noncommutative generalization of Stone duality,} J.\ Aust.\ Math.\ Soc.\ \textbf{88} (2010) 385--404.

\bibitem{Lawson3} M.\ V.\ Lawson, \textit{Non-commutative Stone duality: inverse semigroups, topological groupoids and $C^*$-algebras,} Internat.\ J.\ Algebra Comput.\ \textbf{22} (2012) 47 pp.   

\bibitem{LL} M.\ V.\ Lawson and D.\ H.\ Lenz, \textit{Pseudogroups and their étale groupoids,} Adv.\ Math.\ \textbf{244} (2013) 117--170.

\bibitem{LMS} M.\ V.\ Lawson, S.\ W.\ Margolis, and B.\ Steinberg, \textit{The étale groupoid of an inverse semigroup as a groupoid of filters,} J. Aust.\ Math.\ Soc.\ \textbf{94} (2013) 234--256.

\bibitem{Lenz} D.\ H.\ Lenz, \textit{An order-based construction of a topological groupoid from an inverse semigroup,} Proc.\ Edinb.\ Math.\ Soc.\ \textbf{51} (2008) 387--406.

\bibitem{MM} Z.\ Mesyan and J.\ D.\ Mitchell, \textit{The structure of a graph inverse semigroup,} Semigroup Forum \textbf{93} (2016) 111--130.

\bibitem{paterson1} A.\ L.\ T.\ Paterson, \textit{Groupoids, inverse semigroups, and their operator algebras,} Progress in Mathematics \textbf{70}, Springer Science+Business Media, New York 1999.

\bibitem{renault} J.\ Renault, \textit{A groupoid approach to $C^*$-algebras,} Lecture Notes in Mathematics \textbf{793}, Springer, Berlin, 1980.

\bibitem{resende} P.\ Resende, \textit{Étale groupoids and their quantales,} Adv.\ Math.\ \textbf{208} (2007) 147--209.

\bibitem{rigby} S.\ W.\ Rigby, \textit{The groupoid approach to Leavitt path algebras,} In: Leavitt path algebras and classical K-theory, pp. 23--71, Indian Statistical Institute Series, Springer, Singapore, 2020.

\bibitem{steinberg} B.\ Steinberg, \textit{Diagonal-preserving isomorphisms of étale groupoid algebras,} J.\ Algebra \textbf{518} (2019) 412--439.

\bibitem{stone} M.\ H.\ Stone, \textit{The theory of representation for boolean algebras,} Trans.\ Amer.\ Math.\ Soc.\ \textbf{40} (1936) 37--111.

\bibitem{wehrung} F.\ Wehrung, \textit{Refinement monoids, equidecomposability types, and Boolean inverse semigroups,} Lecture Notes in Mathematics \textbf{2188}, Springer, Cham, 2017. 

\end{thebibliography}
\end{document}